\let\oldlist=\list
\newlength\oldparskip
\def\list#1#2{\oldparskip=\parskip\parskip=0\baselineskip
\oldlist{#1}{#2}\parskip=0\baselineskip}
\let\oldendlist=\enditemize
\def\enditemize{\oldendlist\vspace*{-\oldparskip}}
\renewcommand{\address}[1]{\thanks{#1}}
\newtheorem{theorem}{Theorem}[section]
\newtheorem{lemma}[theorem]{Lemma}
\newtheorem{remark}[theorem]{Remark}
\newtheorem{proposition}[theorem]{Proposition}
\newtheorem{definition}[theorem]{Definition}
\newcounter{step}[theorem]
\newcommand\cC{\mathcal{C}}
\newcommand\cT{\mathcal{T}}
\newcommand\pprime{\prime\prime}
\newcommand\norm[1]{\left\lVert #1\right\rVert}
\newcommand\abs[1]{\left\lvert #1\right\rvert}
\def\inprod{\@ifnextchar_{\leftinprod}{\ninprod}}
\def\leftinprod_#1#2#3{\,\vphantom{\ninprod{#2}{#3}}_{#1}\!\!\ninprod{#2}{#3}}
\newcommand\ninprod[2]{\left\langle #1, #2\right\rangle}
\newcommand\hsnorm[1]{\norm{#1}_{\text{\tiny HS}}}
\newcommand\snorm[1]{\norm{#1}_{\text{\tiny S}}}
\DeclareMathOperator{\IC}{\mathbb{C}}
\DeclareMathOperator{\IR}{\mathbb{R}}
\DeclareMathOperator{\IQ}{\mathbb{Q}}
\DeclareMathOperator{\IZ}{\mathbb{Z}}
\DeclareMathOperator{\IN}{\mathbb{N}}
\DeclareMathOperator{\F}{F}
\DeclareMathOperator{\cF}{\mathcal{F}}
\DeclareMathOperator{\Perm}{S}
\def\MatrixGroup#1_#2#3{\mathchoice{#1_{#2}\hspace*{-0.3mm}#3}{#1_{#2}\hspace*{-0.4mm}#3}{#1_{#2}\hspace*{-0.4mm}#3\hspace*{0.3mm}}{#1_{#2}\hspace*{-0.4mm}#3\hspace*{0.3mm}}%
  \def\tempa{#3}%
  \def\tempb{\IQ}\ifx\tempa\tempb\else%
  \def\tempb{\IZ}\ifx\tempa\tempb\else%
  \def\tempb{R}\ifx\tempa\tempb\else%
  \def\tempb{(\F_2[X])}\ifx\tempa\tempb\else%
  \def\tempb{(\F_2)}\ifx\tempa\tempb\else%
  \def\tempb{\IC}\ifx\tempa\tempb\else%
  \fout\fi\fi\fi\fi\fi\fi%
}
\newcommand{\FG}{\mathbb{F}}
\DeclareMathOperator{\free}{\ast}
\DeclareMathOperator{\bigfree}{\ast}
\def\quot{\@ifnextchar[{\sdp@quotleftA}{\sdp@quotright}}
\def\sdp@quotleftA[#1]#2{\@ifnextchar[{\sdp@quotboth[{#1}]{#2}}{\sdp@quotleft[{#1}]{#2}}}
\def\sdp@quotright#1[#2]{#1/#2}
\def\sdp@quotboth[#1]#2[#3]{#1\backslash #2 / #3}
\def\sdp@quotleft[#1]#2{#1\backslash #2}
\DeclareMathOperator{\Aut}{Aut}
\def\Autmp(#1,#2){\Aut_{#2}(#1)}
\def\Auttp(#1,#2){\Aut_{#2}(#1)}
\DeclareMathOperator{\Out}{Out}
\def\Outmp(#1,#2){\Out_{#2}(#1)}
\def\Outtp(#1,#2){\Out_{#2}(#1)}
\DeclareMathOperator{\op}{op}
\def\Rel(#1\actson #2){\mathcal{R}(#1\actson #2)}
\def\fullg(#1){\left[#1\right]}
\def\fullpg(#1){\left[\left[#1\right]\right]}
\DeclareMathOperator{\Bounded}{\mathcal{B}}
\DeclareMathOperator{\MatM}{M}
\DeclareMathOperator{\Lp}{L}
\DeclareMathOperator{\Lg}{L}
\newcommand{\bimod}[3]{\strut_{#1}#2\strut_{#3}}
\def\vnInterB(#1){\zH_{#1}}
\DeclareMathOperator{\Ad}{Ad}
\DeclareMathOperator{\actson}{\curvearrowright}
\DeclareMathOperator{\id}{id}
\newcommand\restrict[1]{\vert_{#1}}
\DeclareMathOperator{\Span}{span}
\def\op{\text{\scriptsize op}}
\def\bibounded#1{\overset{\circ}{#1}\vphantom{#1}}
\def\bbH{\bibounded{H}}
\def\pcT{\mathcal{T}^{\prime\prime}}
\def\Gauss{\Gamma}
\def\ppGauss{\Gauss^{\prime\prime}}
\def\zH{\@ifnextchar_{\zH@}{\@undefined}}
\DeclareMathOperator{\zH@}{H}
\def\smallscripts#1{\@ifnextchar_{\smallscripts@{#1}}{\@undefined}}
\def\smallscripts@#1{\@ifnextchar_{\smallscripts@subP{#1}}{\@ifnextchar^{\smallscripts@superP{#1}}{\smallscripts@none{#1}}}}
\def\smallscripts@subP#1_#2{\@ifnextchar^{\smallscripts@subsuperP{#1}{#2}}{\smallscripts@sub{#1}{#2}}}
\def\smallscripts@subsuperP#1#2^#3{\smallscripts@both{#1}{#2}{#3}}
\def\smallscripts@superP#1^#2{\@ifnextchar_{\smallscripts@supersubP{#1}{#2}}{\smallscripts@super{#1}{#2}}}
\def\smallscripts@supersubP#1#2_#3{\smallscripts@both{#1}{#3}{#2}}
\def\smallscripts@none#1{{\displaystyle #1} }
\def\smallscripts@sub#1#2{{\displaystyle #1}_{\scriptscriptstyle #2}}
\def\smallscripts@super#1#2{{\displaystyle #1}^{\scriptscriptstyle #2}}
\def\smallscripts@both#1#2#3{{\displaystyle #1}_{\scriptscriptstyle #2}^{\scriptscriptstyle #3}}
\newcommand\chara{\smallscripts{\chi}}
\begin{document}
\title[Radial Multipliers on von Neumann algebras]{Radial multipliers on aribitrary amalgamated free products of finite von Neumann algebras}
\stepcounter{footnote}
\author{Steven Deprez}
\thanks{corresponding author, University of Copenhagen, sdeprez@math.ku.dk}
\thanks{Supported by ERC Advanced Grant no. OAFPG 247321}
\thanks{Supported by the Danish National Research Foundation through the Centre for Symmetry and Deformation}
\thanks{(DNRF92).} 
\address{Department of mathematics, Copenhagen university, Universitetsparken 5, 2500, Copenhagen O}

\begin{abstract}
  Let $(M_i)_{i}$ be a (finite or infinite) family of finite von Neumann algebras with a common subalgebra $P$. When $\varphi:\IN\rightarrow\IC$ is a function,
  we define the radial multiplier $M_\varphi$ on the amalgamated free product $M=M_1\free_P M_2\free_P\ldots$ setting $M_{\varphi}(x)=\varphi(n)x$ for every reduced expression
  $x$ of length $n$. In this paper we give a sufficient condition on $\varphi$ to ensure that the corresponding radial multiplier $M_\varphi$ is a completely bounded map, and
  moreover we give an upper bound on its completely bounded norm. Our condition on $\varphi$ does not depend on the choice of von Neumann algebras $(M_i)_i$ and $P$. This result extends
  earlier results by Haagerup and M\"oller, who proved the same statement for free products without amalgamation, and M\"oller showed that the same statement holds when $P$ has finite index
  in each of the $M_i$.

\end{abstract}

\maketitle

\section*{Introduction}
Let $\Gamma$ be a countable group. A function $f:\Gamma\rightarrow \IC$ is called a Herz-Schur multiplier
if the map $m_f:\IC\Gamma\rightarrow \IC\Gamma$ that is defined by $m_f(u_g)=f(g)u_g$ extends to a
$\sigma$-weakly continuous completely bounded map
$m_f:\Lg\Gamma\rightarrow\Lg\Gamma$. The Herz-Schur norm $\hsnorm{f}$ of $f$ is the completely-bounded norm of $m_f$. 

The free groups $\FG_n$ ($2\leq n\leq \infty$) come equiped with a natural word-length function $\abs{\cdot}$, associated to the canonical generators. Given a map $\varphi:\IN\rightarrow\IC$,
we can look at the associated radial function $f_\varphi:\FG_n\rightarrow \IC$, that is given by
$f_\varphi(g)=\varphi(\abs{g})$. Haagerup and Szwarc \cite{HSS10} showed that $f_\varphi$ is a Herz-Schur
multiplier if
and only if the H\"ankel matrix that is given by $H_\varphi=(\varphi(n+m)-\varphi(n+m+2))_{n,m}$ is trace
class. In that case,
there exist constants $c_\pm\in\IC$ such that $\varphi(n)=\varphi_0(n)+c_{+}+(-1)^nc_{-}$ where
$\varphi_0(n)$ converges to $0$.
The Herz-Schur norm of $f_\varphi$ is bounded by
\[\hsnorm{f_\varphi}\leq \norm{H_\varphi}_1+\abs{c_+}+\abs{c_-}.\]
In fact, this inequality becomes an equality if $n=\infty$, and Haagerup and Szwarc give an exact expression
for the Herz-Schur norm, also when $n$ is finite.

We say that a function $\varphi:\IN\rightarrow\IC$ is in class $\cC$ if the H\"ankel matrix $H_\varphi$ is
trace class, and we will write $\norm{\varphi}_{\cC}=\norm{H_\varphi}+\abs{c_+}+\abs{c_-}$.

Wysoczanski \cite{W95} studied radial multipliers on free products $\Gamma=\free_i\Gamma_i$ of groups.
A free product comes equiped with a natural word-length function $g\mapsto\abs{g}$, so it makes sense
to talk about radial multipliers on free products. Wysoczanski showed that a radial function $f_\varphi$
is a Herz-Schur multiplier if the H\"ankel matrix $K_\varphi=(\varphi(n+m)-\varphi(n+m+1))_{n,m}$
is trace class. If this is the case, we say that $\varphi$ is in class $\cC^\prime$.
If $\varphi\in\cC^\prime$, then the limit $c=\lim_{n\rightarrow \infty}\varphi(n)$ exists and the Hankel matrix
$\widetilde K_\varphi=(\varphi(n+m+1)-\varphi(n+m+2))_{n,m}$ is also trace class.
Wysoczanski showed that the Herz-Schur norm of $f_\varphi$ is bounded by
\[\hsnorm{f_\varphi}\leq \norm{\varphi}_{\cC^\prime}=
\norm{K_\varphi}_1+\norm{\widetilde K_\varphi}_1+\abs{c}.\]
When all the groups $\Gamma_i$ have the same (possibly infinite) cardinality, he can explicitly compute
the Herz-Schur norm of $f_\varphi$.

Wysoczanski's result has been extended to an operator algebraic framework by Haagerup and M\"oller
\cite{HM12}. Namely, when $M=\free_i M_i$ is a free product of von Neumann algebras, then $M$ is
$\sigma$-weakly densely spanned by the reduced words, i.e.\@ words of the form $x_1\ldots x_n$ where
$x_k\in M_{i_k}\setminus \IC1$ and $i_1\not=i_2\not=\ldots\not=i_n$. Given a function
$\varphi:\IN\rightarrow\IC$, we consider the radial multiplier $m_\varphi$ that is given by
$m_\varphi(x_1\ldots x_n)=\varphi(n)x_1\ldots x_n$ for every reduced word $x_1\ldots x_n$.
Haagerup and M\"oller showed that $m_\varphi$ is a completely bounded map if
the H\"ankel matrix $K_\varphi$ defined above is trace class. They also showed that in that case
$\norm{m_\varphi}_{cb}\leq\norm{\varphi}_{\cC^\prime}$. Recently, M\"oller
extended this result to include amalgamated free products where the amalgam has finite index in each of the
factors $M_i$ \cite{M13}.

The present paper extends this result to arbitrary amalgamated free products of finite von Neumann algebras.
We give an overview of the techniques used. It is instructive to first look at the ideas behind Wysoczanski's
result.

The Haagerup-Szwarc result can be restated as a result about radial Schur multipliers on trees, as we
explain now. In this form, the result was published by Haagerup, Steenstrup and Szwarc \cite{HSS10},
but is based on earlier unpublished notes by Haagerup and Szwarc.
Let $X$ be
a countable set, and consider a map $k:X\times X\rightarrow \IC$. The Schur product $k\star T$ of $k$ with a
bounded operator $T\in\Bounded(\ell^2(X))$ is given by
\[\inprod{\delta_x}{(k\star T)\delta_y}=k(x,y)\inprod{\delta_x}{T\delta_y}.\]
We say that $k$ is a Schur multiplier is $m_k:T\mapsto k\star T$ is a well-defined completely bounded
map from $\Bounded(\ell^2(X))$ to itself, and its Schur norm is $\snorm{k}=\norm{m_k}_{cb}$.
Suppose now that $d:X\times X\rightarrow \IR_+$ is a metric on $X$. We say that a Schur multiplier
$k:X\times X\rightarrow\IC$ is a \emph{radial} Schur multiplier if it is of the form $k(x,y)=\varphi(d(x,y))$
for some function $\varphi:\IR_+\rightarrow \IC$.

The concepts of Schur multipliers and Herz-Schur multipliers are closely related: a function
$f:\Gamma\rightarrow\IC$ on a group $\Gamma$ is a Herz-Schur multiplier if and only if the function
defined by $k(g,h)=f(g^{-1}h)$ is a Schur multiplier, and moreover $\hsnorm{f}=\snorm{k}$.

Let $T$ be a regular tree, which we identify with its vertex set. We consider $T$ as a metric space with the
usual metric $d:T\times T\rightarrow \IN$, given by the length of the shortest path between two vertices.
Haagerup, Steenstrup and Szwarc showed that
a radial function $k_\varphi:T\times T\rightarrow \IC$, defined by $k_\varphi(x,y)=\varphi(d(x,y))$
is a radial Schur multiplier if and only if $\varphi\in\cC$, and moreover its Schur norm is bounded by
$\snorm{k_\varphi}\leq \norm{\varphi}_{\cC}$.

Using this restatement, the proof of Wysoczanski's result goes as follows. Observe that
$\Gamma=\bigfree_i\Gamma_i$ is the fundamental group of the following graph of groups:

\hspace*{\stretch{1}}\begin{tikzpicture}[thick]
  \fill (0,0) circle(1.5pt) node[above left]{$\{e\}$};
  \fill (0,0) -- (60:1.5cm) circle(1.5pt) node[above right]{$\Gamma_1$};
  \fill (0,0) -- (10:1.5cm) circle(1.5pt) node[above right]{$\Gamma_2$};
  \fill (-20:1.5cm) circle(0.5pt) (-25:1.5cm) circle(0.5pt) (-30:1.5cm) circle(0.5pt);
  \fill (0,0) -- (-60:1.5cm) circle(1.5pt) node[right]{$\Gamma_n$};
\end{tikzpicture}\hspace*{\stretch{1}}

Consider the action of $\Gamma$ on the Bass-Serre tree $T$ associated to this graph of groups.
Observe that the vertices of the tree $T$ are indexed by
$\Gamma\sqcup \Gamma/\Gamma_1\sqcup\ldots\sqcup\Gamma/\Gamma_n$.
Consider the vertex $v_0\in T$ that corresponds to the identity in $\Gamma$.
For every group element $g\in\Gamma$, we see that the distance between $v_0$ and $g\cdot v_0$
is exactly twice the word-length of $g$. Let $\varphi:\IN\rightarrow \IC$ be a function.
Consider the function $\psi:\IN\rightarrow \IC$ that is defined
by $\psi(n)=\varphi(n/2)$ if $n$ is even, and $\psi(n)=0$ if $n$ is odd. The function $\psi$ is in class $\cC$
if and only if $\varphi$ is in class $\cC^\prime$, and moreover
$\norm{\psi}_{\cC}=\norm{\varphi}_{\cC^\prime}$.
Consider the radial functions $k=k_\psi:T\times T\rightarrow \IC$ and $f=f_\varphi:\Gamma\rightarrow \IC$
as before. We define an isometry $u:\ell^2(\Gamma)\rightarrow\ell^2(T)\otimes\ell^2(\Gamma)$ by the formula
$u(\delta_g)=\delta_{g^{-1}v_0}\otimes\delta_g^{-1}$.
Let $\Lg(\Gamma)$ act on $\ell^2(\Gamma)$ by left multiplication, i.e.\@
$u_g\delta_h=\delta_{gh}$.
We observe that the multipliers $m_f$ and $m_k$ are related by
$m_f(x)=u^\ast (m_k\otimes\id)(uxu^\ast) u$ for all $x\in \Lg(\Gamma)$.
In particular, we see that
\[\norm{m_f}_{cb}\leq\norm{m_k}_{cb}\leq\norm{\psi}_{\cC}=\norm{\varphi}_{\cC^\prime}.\]

At no point in the argument above did we use the fact that $\Gamma$ was a free product as opposed to an
amalgamated free product. We prove our result about amalgamated free products of von Neumann algebras
using a similar strategy. For this, we have to find a good replacement for the ``fundamental group of
a graph of groups''. We think that the right notion is that of the ``relative Gaussian construction'',
which we describe in short below, and in more detail in section \ref{sect:rg}. They come naturally with
a notion of a ``reduced word'', so it makes sense to talk about radial multipliers. We show that, for
an important subclass of the ``relative Gaussian constructions'', a radial multiplier $m_\varphi$ is
completely bounded, whenever $\varphi$ is in class $\cC$, and $\norm{m_\varphi}_{cb}\leq \norm{\varphi}_{\cC}$.
This is only true for a subclass of the ``relative Gaussian constructions'', as it fails for the classical
Gaussian construction.

The relative Gaussian construction is a strong generalization of Voiculescu's free Gaussian construction
\cite{V:FreeGauss}. Many generalizations of the free Gaussian construction have been introduced before.
For example, there are the $q$-Gaussian constructions \cite{BSp:qGauss}. The construction that is closest to our
``relative Gaussian construction'' is given by Shlyakhtenko's ``$A$-valued semicircular systems''
\cite{S99}. One can describe the
``relative Gaussian construction'' as being a tracial $A$-valued semicircular system,
but deformed in a way similar to
the $q$-Gaussian constructions.
As with the free Gaussian construction
and its generalizations, we first define a kind of \emph{Fock space} and creation and annihilation
operators on them. Then we define the ``relative Gaussian construction'' as the algebra generated by
certain creation and annihilation operators.

For the construction of the \emph{relative Fock space}, we need three pieces of data:
\begin{itemize}
\item A von Neumann algebra $M$ with a faithful normal state $\tau$.
\item A bimodule $H$ over $M$.
\item A self-adjoint $M$-$M$ bimodular contraction $F:H\otimes_M H\rightarrow H\otimes_M H$
  that satisfies the braid relation
  \[(F\otimes1_H)(1_H\otimes F)(F\otimes1_H)=(1_H\otimes F)(F\otimes1_H)(1_H\otimes F),\]
  as operators on $H\otimes_MH\otimes_M H$.
\end{itemize}

Consider the $n$-fold Connes tensor product bimodule $H^{(n)}$ of $H$ with itself. By convention,
we set $H^{(0)}=M$. We define a number of $M$-$M$ bimodular operators on $H^{(n)}$. Let $\sigma\in\Perm_n$
be a permutation on $n$ elements. It is well-known that $\sigma$ can be decomposed as $\sigma=t_1\ldots t_n$
where each $t_i=(k_i,k_i+1)$ is a transposition of two consecutive elements. Moreover, the shortest
such decomposition is unique, up to applying the braid relation
\[(k,k+1)(k,k-1)(k,k+1)=(k,k-1)(k,k+1)(k,k-1).\]
We assume that $\sigma=t_1\ldots t_n$ is such a shortest decomposition. Put
\[F_{(k,k+1)}=-1_{H^{(k-1)}}\otimes F\otimes 1_{H^{(n-k-1)}},\]
and observe that $F_\sigma=F_{t_1}\ldots F_{t_n}$ is well-defined. Define an operator $D_n$ by
\[D_n=\sum_{\sigma\in\Perm_n}F_\sigma.\]
Bozejko and Speicher showed in \cite{BSp:Coxeter} that such an operator $D_n$ is always a positive operator.
We define a new inner product $\inprod{\cdot}{\cdot}_F=\inprod{\cdot}{D_n\cdot}$ on $H^{(n)}$.
Denote by $H^{(n)}_F$ the $M$-$M$-bimodule that we obtain from $H^{(n)}$ by completion and separation
with respect to this new inner product $\inprod{\cdot}{\cdot}_F$. The relative Fock space is now
\[\cF_M(H,F)=\bigoplus_{n\geq 0}H^{(n)}_F\]

In $\cF_M(H,F)$, the vector $\hat 1\in \Lp^2(M)\subset \cF_M(H,F)$ plays a special role. It will be called the
\emph{vacuum vector} and is denoted by $\Omega$ to avoid confusion.
For every $\xi\in H$, we define a left creation operator $L(\xi)$ on $\cF_M(H,F)$, by the formula
\[L(\xi)\eta=\xi\otimes\eta\in H^{(n+1)}\subset H^{(n+1)}_F\text{ for all }\eta\in H^{(n)}.\]
This creation operator is in general a closable unbounded operator. We still denote its closure
by $L(\xi)$. The adjoint $L(\xi)^\ast$ is called an annihilation operator. When $F=0$,
then we get $D_n=1$ for all
$n\in\IN$, and the creation operators satisfy the relation
\[L(\xi)^\ast L(\eta)=\inprod{\xi}{\eta}_M.\]
In this case, we obtain the Fock space from Shlyakhtenko's $A$-valued semicircular elements.
If $M=\IC$ and $F(\xi\otimes\eta)=q\eta\otimes\xi$ for some $-1<q<1$, we obtain the Fock space of the
$(-q)$-Gaussian construction, and the creation and annihilation operators satisfy the relation
\[L(\xi)^\ast L(\eta)=\inprod{\xi}{\eta}+qL(\xi)L(\eta)^\ast.\]
In general, a similar relation holds, but it is slightly more complicated to write down. See section
\ref{sect:rg}, proposition \ref{prop:L}.
In any case, the algebra $\cT_M(H,F)=\Bounded_M(\cF_M(H,F))$
of bounded right-$M$-linear operators
on $\cF_M(H,F)$ is the $w^\ast$-closed linear span of the left action of $M$ and of
(the spectral projections of the self-adjoint part of) operators of the form
\[L(\xi_1)\ldots L(\xi_n)L(\eta_m)^\ast\ldots L(\eta_1)^\ast
\text{ where }\xi_1,\ldots,\xi_n,\eta_1,\ldots,\eta_m\in H.\]

For the relative Gaussian construction, we need one more piece of data:
an anti-unitary $J:H\rightarrow H$ that satisfies $J(x\xi y)=y^\ast J(\xi) x^\ast$,
and that is compatible with $F$ is some way. See section \ref{sect:rg} for more details.
Then we define $\Gamma^{\pprime}_{M}(H,F,J)$ to be the von Neumann algebra
on $\cF_{M}(H,F)$ that is generated by the left action of $M$ and all the
operators of the form $L(\xi)+L(J\xi)$ with $\xi\in H$.

Obviously the free Gaussian construction can be obtained from our construction by taking $M=\IC$, $F=0$ and
where $J$ is given by complex conjugation on $\ell^2(S)$ for a countable set $S$.
More generally, the $q$-Gaussian construction can be obtained by setting
$F(\xi\otimes\eta)=-q\eta\otimes\xi$. The tracial cases of
Shlyakhtenko's $A$-valued semicircular systems can be obtained by setting $F=0$ in our general construction.

For us, a more important class of examples is given by amalgamated free products and HNN extensions.
If $M_1\free_PM_2$ is an amalgamated free product, then we set $M=M_1\oplus M_2$, and
\[H=\Lp^2(M_1)\otimes_P\Lp^2(M_2)\oplus \Lp^2(M_2)\otimes_P\Lp^2(M_1)\]
The anti-unitary $J$ interchanges the two components of the direct sum above.
The operator $F$ is the projection onto the closed subspace
\begin{align*}
  &\Lp^2(M_1)\otimes_P \Lp^2(M_1)\oplus \Lp^2(M_2)\otimes_P\Lp^2(M_2)\\
  &\qquad=\Lp^2(M_1)\otimes_P \Lp^2(P)\otimes_P \Lp^2(M_1)\oplus \Lp^2(M_2)\otimes_P \Lp^2(P)\otimes_P \Lp^2(M_2)\\
  &\qquad\subset \Lp^2(M_1)\otimes_P \Lp^2(M_2)\otimes_P \Lp^2(M_1)
    \oplus \Lp^2(M_2)\otimes_P \Lp^2(M_1)\otimes_P \Lp^2(M_2)\\
  &\qquad=H^{(2)}.
\end{align*}
Then we obtain that
\[\Gamma^{\pprime}_M(H,F,J)=\MatM_2(\IC)\otimes (M_1\free_P M_2).\]

More generally, suppose we are given a graph $\Gamma$ of von Neumann algebras. A graph consists of a set $V$ of vertices,
and a set $E$ of edges, together with source and target maps $s,t:E\rightarrow V$, and with an involution
$\overline{\cdot}:E\rightarrow E$ that reverses each edge, so $s(\overline{e})=t(e)$, $t(\overline{e})=s(e)$
and $\overline{\overline{e}}=e$. A graph of von Neumann algebras is a graph $(V,E)$ together with a family of von Neumann
algebras $(M_v)_{v\in V}$ and a family of von Neumann subalgebras $P_e\subset M_{s(e)}$ ($e\in E$) and $\ast$-isomorphisms
$\alpha_e:P_e\rightarrow P_{\overline e}$ such that $\alpha_{\overline e}=\alpha_e^{-1}$. Moreover, we assume that there are
normal conditional expectations $E_e:M_{s(e)}\rightarrow P_e$.

Then we set $M=\bigoplus_{v\in V}M_v$ and we define $H$ to be the completion of
\[\bigoplus_{e\in E}\Lp^2(M_{s(e)})\otimes_{P_e}\Lp^2(M_{t(e)}),\]
where the inclusion of $P_e$ into $M_{t(e)}$ is given by the isomorphism
$\alpha_e:P_e\rightarrow P_{\overline e}\subset M_{t(e)}$. The Jordan subspace is spanned (as a real vector space) by elements of the form
\[x\otimes y+y^\ast\otimes x^\ast\in M_{s(e)}\otimes_{P_e}M_{t(e)}+M_{t(e)}\otimes_{P_{\overline{e}}}M_{s(e)}.\]
The operator $F$ is the projection onto the closed subspace of $H^{(2)}$ that is spanned by the spaces of the form
\[M_{t(e)}\otimes_{P_e} P_e\otimes_{P_e}M_{t(e)}.\]
We also call $\Gamma^{\pprime}_M(H,F,J)$ the fundamental von Neumann algebra of the given graph of von Neumann algebras.
With this terminology, we do not get that the group von Neumann algebra of a fundamental group of a gaph of groups is exactly the
fundamental von Neumann algebra of the corresponding graph of von Neumann algebras. Instead the fundamental von Neumann algebra is
the amplification of the group von Neumann algebra of the fundamental group. In fact, the fundamental von Neumann algebra is the
groupoid von Neumann algebra of the fundamental \emph{groupoid} of the graph of groups.

Observe that in both of the previous examples, the contraction $F$ is in fact a projection and moreover $1\otimes F$ commutes
with $F\otimes 1$ on $H^{(3)}$. We can not extend
Haagerup-M\"oller's result to arbitrary relative Gaussian constructions, but we can extend it to all the cases where the
contraction $F$ is a projection such that $1\otimes F$ commutes with $F\otimes 1$:
\begin{theorem}[see theorem \ref{thm:main}]
  \label{thm:main:intro}
  Let $(M,\tau)$ be a tracial von Neumann algebra and let $H$ be a Hilbert $M$-$M$ bimodule. Assume that $F$ is a projection onto
  an $M$-$M$ subbimodule of $H\otimes_M H$ such that $F\otimes1$ commutes with $1\otimes F$ on $H\otimes_MH\otimes_MH$.
  Let $\varphi:\IN\rightarrow\IC$ be a function in class $\cC$ defined above. Then there is a unique completely bounded map
  $\Phi_\varphi:\Bounded_M(\cF_M(H,F))\rightarrow\Bounded_M(\cF_M(H,F))$ that satisfies
  \begin{align*}
    \Phi_\varphi(L(\xi_1)\ldots L(\xi_n)L(\eta_m)^\ast\ldots L(\eta_1))&=\varphi(n+m)L(\xi_1)\ldots L(\xi_n)L(\eta_m)^\ast\ldots L(\eta_1)\\
    \Phi_\varphi(x)&=\varphi(0)x\qquad\text{ for all }x\in M\text{ acting on the left}\\
    \norm{\Phi_\varphi}_{cb}&\leq\norm{\varphi}_{\cC}.
  \end{align*}
\end{theorem}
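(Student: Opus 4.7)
The plan is to adapt Wysoczański's strategy (summarised in the introduction) to the setting of the relative Gaussian construction. The goal is to construct a tree $T$ with distinguished base vertex $v_0$, together with an isometry $U:\cF_M(H,F)\to \ell^2(T)\otimes K$ of right $M$-modules (for a suitable right $M$-module $K$), such that the radial multiplier $\Phi_\varphi$ is the compression by $U$ of the ampliation of the radial Schur multiplier $m_{k_\varphi}$ on $\ell^2(T)$, where $k_\varphi(x,y)=\varphi(d(x,y))$. Given such an isometry, the Haagerup--Steenstrup--Szwarc theorem on radial Schur multipliers on trees immediately yields
\[\norm{\Phi_\varphi}_{cb}\leq \norm{m_{k_\varphi}\otimes \id}_{cb}\leq \norm{\varphi}_\cC,\]
which is the desired bound. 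A key feature is that $T$ will be arranged so that vertices at distance $n$ from $v_0$ correspond to reduced expressions of length $n$, with no doubling of the length function needed (in contrast to the Bass--Serre tree appearing in Wysoczański's argument).

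The first step is to exploit the hypothesis that $F$ is a projection with $F\otimes 1_H$ commuting with $1_H\otimes F$: by iterating this relation on $H^{(n)}$, the shifted projections $F_k=1_{H^{(k-1)}}\otimes F\otimes 1_{H^{(n-k-1)}}$ for $1\leq k\leq n-1$ form a commuting family. Their joint spectral decomposition produces a canonical orthogonal decomposition of $H^{(n)}$ in which the fully reduced subspace $H^{(n)}_\circ=\bigcap_k\ker F_k$ plays the central role, and makes the operator $D_n=\sum_{\sigma\in \Perm_n}F_\sigma$ diagonal, thereby replacing the inner product $\inprod{\cdot}{D_n\cdot}$ on $H^{(n)}_F$ by a clean orthogonal sum of reduced pieces of various lengths. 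The second step is to build $T$: its vertex set is $V(T)=\bigsqcup_{n\geq 0}V_n$, where $V_n$ indexes a basis of $H^{(n)}_\circ$ over the right $M$-action (so $V_0=\{v_0\}$ is a singleton corresponding to the vacuum); an edge joins $v\in V_n$ to $w\in V_{n-1}$ precisely when the reduced vector indexed by $v$ factors, modulo the right $M$-action, as the reduced vector indexed by $w$ followed by one letter in $H$. Commutativity of the $F_k$'s is precisely what makes this parent unique, so $T$ is a tree with $d(v_0,v)=n$ on $V_n$. The isometry $U$ then sends the reduced vector indexed by $v$ to $\delta_v\otimes\zeta_v$ where $\zeta_v\in K$ records its right $M$-coefficient; the intertwining identity $\Phi_\varphi(x)=U^\ast((m_{k_\varphi}\otimes\id)(UxU^\ast))U$ is a direct calculation on operators of the form $L(\xi_1)\ldots L(\xi_n)L(\eta_m)^\ast\ldots L(\eta_1)^\ast$, which are supported on pairs $(v,w)$ of vertices with $d(v,w)=n+m$ by the tree identity ``level$(v)+$level$(w)-2\cdot$length of common prefix''.

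The main obstacle is establishing that the tree structure really is well-defined, i.e.\ that each fully reduced vector at level $n$ has an essentially unique factorisation as a fully reduced vector at level $n-1$ tensored with one letter in $H$. This is where the commutation of the $F_k$'s does its decisive work: without it, a fully reduced vector can in general be factored in several inequivalent ways, producing a graph with cycles rather than a tree, and the radial Schur multiplier reduction breaks down---indeed, as noted in the introduction, the theorem fails for arbitrary relative Gaussian constructions, and the commutativity assumption is exactly what forces the tree structure. A subsidiary difficulty is carefully tracking the right $M$-module structure throughout: the basis of $H^{(n)}_\circ$ over the right action is non-canonical, and one must verify that neither the intertwining nor the cb-norm bound depends on the choice made. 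Finally, the contribution of the constant and alternating parts $c_\pm$ of $\varphi$ is already absorbed into $\norm{\varphi}_\cC$ via the Haagerup--Szwarc analysis on the tree and therefore requires no separate treatment.
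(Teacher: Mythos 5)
Your reduction to the Haagerup--Steenstrup--Szwarc theorem on trees breaks down at its central claim, namely that a reduced word $L(\xi_1)\ldots L(\xi_n)L(\eta_m)^\ast\ldots L(\eta_1)^\ast$ is, after conjugation by your isometry $U$, supported on pairs of vertices at distance exactly $n+m$. Test this on the simplest admissible data: $M=\IC$, $F=0$ (the projection onto the zero subbimodule, so the commutation hypothesis is vacuous), $H=\ell^2(S)$ with orthonormal basis $(e_s)_s$. Then $\cF_M(H,F)$ is the full Fock space, your vertex set is the set of words in the letters $e_s$, and $K$ is trivial, so $U$ is just this identification. The operator $x=L(e_1)L(e_1)^\ast$ is a reduced word with $n=m=1$, hence $\Phi_\varphi(x)$ must equal $\varphi(2)x$; but $x$ is the orthogonal projection onto the span of the basis words beginning with $e_1$, i.e.\ $UxU^\ast$ is a \emph{diagonal} operator in your tree picture. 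Any Schur multiplier $m_{k_\varphi}\otimes\id$ therefore multiplies its nonzero entries by $\varphi(d(v,v))=\varphi(0)$, and the compression returns $\varphi(0)x$, not $\varphi(2)x$. So the intertwining identity $\Phi_\varphi(x)=U^\ast((m_{k_\varphi}\otimes\id)(UxU^\ast))U$ is false, and no choice of tree over a fixed ``basis'' of the reduced subspaces can repair it: the matrix entries of a reduced word simply are not confined to pairs at the advertised distance. The reason Wysoczanski's argument works for groups is the auxiliary leg in $u(\delta_h)=\delta_{h^{-1}v_0}\otimes\delta_h$: the group translation pushes all matrix entries of $\lambda(g)$ onto pairs of vertices at the \emph{constant} distance $2\abs{g}$. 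There is no analogue of this translation action on $\Bounded_M(\cF_M(H,F))$, and your leg $K$ recording the right $M$-coefficient cannot play that role (it is trivial already for $M=\IC$). A secondary, related gap is the tree itself: commutativity of the shifted copies of $F$ does give that $H^{(n)}_F$ is the intersection of kernels inside $H^{(n)}$, hence sits inside $H^{(n-1)}_F\otimes_M H$, but it does not give any canonical ``unique parent'' for members of a right-module basis; decompositions of a vector of $H^{(n-1)}_F\otimes_M H$ into elementary tensors are highly non-unique, and your construction depends on choices in a way you do not control.

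This is exactly why the paper does not use the tree theorem as a black box but instead transports its \emph{proof technique}, following Haagerup--M\"oller: one writes $\Phi_\psi=c_+ +c_-\Ad_u+\sum_k\Phi_{x_k,y_k}$, where $u=M_z$ with $z(n)=(-1)^n$, the $x_k,y_k\in\ell^2(\IN)$ come from a rank-one (trace-norm) decomposition of the H\"ankel matrix $H_\psi$, and
\[\Phi_{x,y}(T)=\sum_{n\geq0}M_{(S^\ast)^nx}\,T\,M_{(S^\ast)^ny}^\ast+\sum_{n\geq1}M_{S^nx}\,\rho^n(T)\,M_{S^ny}^\ast,\]
built from radial multiplication operators, shifts, and the completely positive map $\rho(T)=\sum_iR(\xi_i)TR(\xi_i)^\ast$ coming from right creation operators. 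The hypothesis that $F$ is a projection with $F\otimes1$ and $1\otimes F$ commuting is used only to identify $H^{(n)}_F$ with a subspace of $H^{(n)}$ (so that $I_{n,m}I_{n,m}^\ast=\id$) and hence to prove $\rho^l(L(T))=L(T)q_{n+l}$; the cb bound then follows from the Christensen--Sinclair lemma. If you want to salvage your outline, you would need to reprove, in this bimodule setting, the multiplier-decomposition at the heart of Haagerup--Steenstrup--Szwarc rather than invoke their theorem through an embedding.
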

The result about amalgamated free products follows by restricting this completely bounded map to a corner of a subspace.

\section{Preliminaries and Notation}
In this paper, the set of natural numbers $\IN$ includes $0$. When we want to refer to the natural numbers excluding $0$, we write $\IN_1$.

\subsection{Permutation groups}
\label{prelim:perm}
We consider the permutation group $\Perm_n$ to be the group of permutations of the set $\{1,\ldots,n\}$.
This permutation group $\Perm_n$ ($1\leq n<\infty$) is generated by the transpositions of consecutive
elements $t_1=(1,2),\ldots,t_{n-1}=(n-1,n)$. A complete set of relations for these generators is given by
\begin{align}
  t_i^2&=e&&\qquad\text{for all }1\leq i\leq n-1\\
  t_it_j&=t_jt_i&&\qquad\text{whenever }\abs{j-i}\geq 2\label{rel:cox2}\\
  t_it_{i+1}t_{i}&=t_{i+1}t_it_{i+1}&&\qquad\text{for all }1\leq i\leq n-2\label{rel:cox3}
\end{align}
The permutation group $\Perm_n$ is a Coxeter group with Coxeter system $(\Perm_n,\{t_i\})$, and most of the
results mentioned below are true in the more general context of Coxeter groups.

We write $\abs{\sigma}$ for the length of an element $\sigma\in S_n$ with respect
to the generating set $\{t_1,\ldots,t_{n-2}\}$. Observe that every element $\sigma\in\Perm_n$ is
represented by a word $t_{i_1}\ldots t_{i_s}$ of minimal length, and that this word is unique up to
relations (\ref{rel:cox2}) and (\ref{rel:cox3}). The length $\abs{\sigma}$ can also be computed as the number
of inversions of $\sigma$, i.e.\@ the number of pairs $1\leq i<j\leq n$ such that $\sigma(i)>\sigma(j)$.

Let $\pi$ be a partition of $\{1,ldots,n\}$ into consecutive sets, i.e. $\pi$ is of the form
\begin{equation}
\label{eqn:pi}
\pi=\left\{\{1,\ldots,k_1\},\{k_1+1,\ldots,k_1+k_2\},\ldots,\left\{1+\sum_{i=1}^{s-1}k_i, \ldots, \sum_{i=1}^{s}k_i\right\}\right\}
\end{equation}
for some $k_1,\ldots,k_s\geq 1$ with $\sum_ik_i=n$.
Then we can consider the subgroup $\Perm_{\pi}$ of permutations that preserve the partition $\pi$, i.e.
\[\Perm_\pi=\{\sigma\in \Perm_n\mid \sigma(X)=X\,\forall X\in\pi\}\cong \Perm_{k_1}\oplus\ldots\oplus \Perm_{k_s}.
\]
Such a subgroup is called a \emph{parabolic} subgroup of $\Perm_n$.
Every left coset $C$ of $\Perm_\pi$
contains a unique element $\sigma_C$ of minimal length. So every element $\sigma\in C$ can be written
uniquely as a product $\sigma=\sigma_C\sigma_0$ where $\sigma_0\in \Perm_\pi$. This decomposition
satisfies $\abs{\sigma}=\abs{\sigma_C}+\abs{\sigma_0}$.
The set of all element $\sigma_C$ is denoted by $V_\pi$.
This set $V_\pi$ can also be described directly in terms of the partition $\pi$:
\[V_\pi=\left\{\sigma\in \Perm_n\mid \sigma\restrict X\text{ is increasing for every }X\in \pi\right\}.\]
When $\pi$ is given explicitly in the form (\ref{eqn:pi}), we also write $V_{k_1,\ldots,k_s}=V_\pi$.
It is convenient to allow $k_i=0$ for some $i$. In that case, the corresponding set in $\pi$ is empty.

For $\sigma_1\in\Perm_n$ and $\sigma_2\in\Perm_m$, we write $\sigma_1\times\sigma_2$ for the permutation
in $\Perm_{n+m}$ that is given by
\[(\sigma_1\times\sigma_2)(i)=
\begin{cases}
  \sigma_1(i)&\text{if }i\leq n\\
  \sigma_2(i-n)+n&\text{if }i>n
\end{cases}
\]
The identity element in $\Perm_n$ is denoted by $\id_n$, or just $\id$ when $n$ is clear from the context.
We write $\sigma_{k,l}$ for the permutation in $\Perm_{k,l}$ that satisfies
\begin{equation}
  \label{eq:sigmanm}
  \sigma_{k,l}(i)=
\begin{cases}
  i+l&\text{if }i\leq k\\
  i-k&\text{if }i>k
\end{cases}
\end{equation}

\begin{lemma}
  \label{lem:perm}
  With this notation, we get the following relation between sets of the form $V_{k_1,\ldots,k_s}$:
  \begin{itemize}
  \item For every $n,m\in\IN_1$ and for all $k\geq -n$, we get
    \[V_{n+k,m}=\bigsqcup_{l=\max(-k,0)}^{\min(n,m)} (V_{n-l,l}\times V_{k+l,m-l})
    (\id_{n-l}\times \sigma_{k+l,l}\times \id_{m-l}).\]
  \end{itemize}
\end{lemma}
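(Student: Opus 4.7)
The plan is to set up a bijection between both sides by parameterising elements in terms of the image of $\{1,\ldots,n+k\}$, and then computing the image set for a typical product on the right-hand side.

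First I would recall the combinatorial description of the $V$-sets: an element $\pi\in V_{n+k,m}\subseteq \Perm_{n+k+m}$ is uniquely determined by the $(n+k)$-subset $\pi(\{1,\ldots,n+k\})\subseteq\{1,\ldots,n+k+m\}$, and every such subset arises. Analogously, $\alpha\in V_{n-l,l}\subseteq \Perm_n$ is determined by the $(n-l)$-subset $\alpha(\{1,\ldots,n-l\})\subseteq\{1,\ldots,n\}$, and $\beta\in V_{k+l,m-l}\subseteq \Perm_{k+m}$ is determined by the $(k+l)$-subset $\beta(\{1,\ldots,k+l\})\subseteq\{1,\ldots,k+m\}$.

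Next, I would compute the image of $\{1,\ldots,n+k\}$ under a right-hand side element. Writing $\sigma_0=\id_{n-l}\times \sigma_{k+l,l}\times\id_{m-l}$, the explicit formula (\ref{eq:sigmanm}) gives
\[\sigma_0(\{1,\ldots,n+k\})=\{1,\ldots,n-l\}\cup\{n+1,\ldots,n+k+l\},\]
where the first block is fixed pointwise and $\{n-l+1,\ldots,n+k\}$ is translated by $l$. Applying $\alpha\times\beta$ afterwards (with $\alpha$ acting on $\{1,\ldots,n\}$ and $\beta$ on the shifted copy $\{n+1,\ldots,n+k+m\}$) one obtains the image set
\[A\cup(n+B),\qquad A:=\alpha(\{1,\ldots,n-l\})\subseteq\{1,\ldots,n\},\quad B:=\beta(\{1,\ldots,k+l\})\subseteq\{1,\ldots,k+m\}.\]
A short verification, using $\max A\leq n<n+1\leq \min(n+B)$ together with the increasing property of $\alpha$ and $\beta$ on each of their blocks, shows that $(\alpha\times\beta)\sigma_0$ is increasing on $\{1,\ldots,n+k\}$ and on $\{n+k+1,\ldots,n+k+m\}$, so it indeed lies in $V_{n+k,m}$.

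Finally, I would invert the construction. Given $\pi\in V_{n+k,m}$ with image set $S:=\pi(\{1,\ldots,n+k\})$, put $A:=S\cap\{1,\ldots,n\}$, $B:=\{s-n\mid s\in S,\, s>n\}$ and $l:=n-\abs{A}$. The constraints $0\leq \abs{A}\leq n$ and $0\leq \abs{B}\leq k+m$ translate to $\max(-k,0)\leq l\leq \min(n,m)$, and the data $A$, $B$ determine a unique pair $(\alpha,\beta)\in V_{n-l,l}\times V_{k+l,m-l}$ such that $(\alpha\times\beta)\sigma_0$ recovers $\pi$. Disjointness across different values of $l$ is immediate since the size $\abs{A}=n-l$ is then distinct. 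The only real obstacle is careful index bookkeeping, especially that $k$ is allowed to be negative; the range $\max(-k,0)\leq l\leq\min(n,m)$ is exactly what ensures that all four block sizes $n-l$, $l$, $k+l$, $m-l$ remain nonnegative.
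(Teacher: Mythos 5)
Your proof is correct and follows essentially the same route as the paper's: both arguments track how the values of an element of $V_{n+k,m}$ distribute around position $n$ (your $l=n-\abs{S\cap\{1,\ldots,n\}}$ is exactly the paper's count of second-block elements sent into $\{1,\ldots,n\}$) and use the increasing-restriction description of the $V$-sets to get existence, uniqueness, and disjointness of the decomposition. As a minor bonus, you compose the factors in the order actually displayed in the lemma (the shuffle $\id_{n-l}\times\sigma_{k+l,l}\times\id_{m-l}$ applied first, then $\alpha\times\beta$), which is the correct order under the standard composition convention, whereas the paper's prose writes the two factors in the reverse order.
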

\begin{proof}
  The first two assertions follow immediately from the fact that every element of $\sigma\in V_{n,m,k}$ is the
  unique element of shortest length in the coset $\sigma(\Perm_n\times\Perm_m\times\Perm_k)$.

  The last assertion is most easily proven with the characterisation of $V_{n,m}$ in terms
  of the order on $\{1,\ldots,n+m\}$. The argument is best explained using the following picture:
  \\\hspace*{\stretch{1}}\begin{tikzpicture}[thick]
    \draw (2,5.5) node[above]{$\id_{n-l}\times\sigma_{k+l,l}\times\id_{m-l}$};
    \draw (6,5.5) node[above]{$\sigma_1\times\sigma_2$};
    \draw[|-] (0,0) -- (0,1);
    \draw[(-|] (0,1)-- (0,3) node[pos=0.5,left]{k+l};
    \draw[-)] (0,3) -- (0,4) node[pos=0.5,left]{l};
    \draw[-|] (0,4)-- (0,5.5);
    \draw[<->,thin] (-0.9,0) -- (-0.9,3) node[pos=0.5,left]{n+k};
    \draw[<->,thin] (-0.9,3) -- (-0.9,5.5) node[pos=0.5,left]{m};

    \draw[|-] (4,0) -- (4,1);
    \draw[(-|] (4,1)-- (4,2);
    \draw[-)] (4,2) -- (4,4);
    \draw[-|] (4,4)-- (4,5.5);

    \draw[|-|] (8,0) -- (8,2) node[pos=0.5,right]{m+k};
    \draw[-|] (8,2)-- (8,5.5) node[pos=0.5,right]{n};
    \draw[-] (4,2) -- (8,2);

    \draw[->] (0.1,0.5) -- (3.9,0.5);
    \draw[->] (0.1,2) -- (3.9,3);
    \draw[->] (0.1,3.5) -- (3.9,1.5);
    \draw[->] (0.1,4.75) -- (3.9,4.75);

    \draw[->] (4.1,0.33) -- (7.9,0.33);
    \draw[->] (4.1,0.66) -- (7.9,1.66);
    \draw[->] (4.1,1.33) -- (7.9,0.66);
    \draw[->] (4.1,1.66) -- (7.9,1.33);

    \draw[->] (4.1,2.4) -- (7.9,2.8);
    \draw[->] (4.1,2.8) -- (7.9,3.2);
    \draw[->] (4.1,3.2) -- (7.9,3.6);
    \draw[->] (4.1,3.6) -- (7.9,5.125);
    \draw[->] (4.1,4.375) -- (7.9,2.4);
    \draw[->] (4.1,4.75) -- (7.9,4.375);
    \draw[->] (4.1,5.125) -- (7.9,4.75);
  \end{tikzpicture}\hspace*{\stretch{1}}\\

  The elements $\sigma\in V_{n+k,m}$ are precisely the permutations that are increasing on each of
  $\{1,\ldots,n+k\}$ and $\{n+k+1,\ldots,n+k+m\}$ separately. Set $l$ to be the number of elements of
  $\{n+k+1,\ldots,n+k+m\}$ that are mapped into $\{1,\ldots,n\}$. These elements from necessarily an
  initial segment $\{n+k+1,\ldots,n+k+l\}$, because $\sigma$ is increasing on $\{n+k+1,\ldots,n+k+m\}$.
  Because $\sigma$ is a permutation, it maps the elements $\{n-l+1,\ldots,n+k\}$ into the set
  $\{n+1,\ldots,m+n+k\}$. It is now clear that $\sigma$ can be uniquely decomposed as
  $\sigma=(\id_{n-l}\times \sigma_{k+l,l}\times\id_{m-l})(\sigma_1\times\sigma_2)$ where
  $\sigma_1\in V_{n-l,l}$ and $\sigma_2\in V_{k+l,m-l}$. Conversely, it is easy to see that every such element
  is increasing on $\{1,\ldots,n+k\}$ and $\{n+k+1,\ldots,n+k+m\}$.
\end{proof}

\subsection{Bimodules}
Let $M$ be a von Neumann algebra. A left $M$-module is a Hilbert space $H$ with a normal representation
$\lambda:M\rightarrow\Bounded(H)$. Given a normal tracial state $\tau$ on $M$, we can perform the GNS
construction: on $M$ we consider the inner product given by $\inprod{x}{y}=\tau(x^\ast y)$. The completion
of $M$ with respect to this inner product is the GNS construction of $M$ and is written as $\Lp^2(M,\tau)$.
We also write $\norm{x}_2^2=\tau(x^\ast x)$.
The Hilbert space $\Lp^2(M,\tau)$ comes with a natural map $M\ni x\mapsto \hat x\in\Lp^2(M,\tau)$, and this map
has dense range. There is a natural representation of $M$ on $\Lp^2(M,\tau)$ that is given by
$\lambda(x)\hat y=\widehat{xy}$.
If it is clear from the context which trace we use, we just write
$\Lp^2(M)$. Left $M$-modules over von Neumann algebras are not
very interesting: every left $M$-module $H$ is isomorphic to $\bigoplus_i \Lp^2(M,\tau)p_i$
where the $p_i$ are projections in $M$ and $\tau$ is any faithful normal tracial state on $M$.

An $M$-$M$ bimodule is a Hilbert space $H$ with two normal representations $\lambda:M\rightarrow \Bounded(H)$
and $\rho:M^{\op}\rightarrow \Bounded(H)$ such that $\lambda(x)$ commutes with $\rho(y)$ for all $x\in M$ and
$y\in M^{\op}$. We will write $x\xi y=\lambda(x)\rho(y^{\op})\xi$ for all $x,y\in M$. The GNS construction
$\Lp^2(M,\tau)$ is an $M$-$M$-bimodule where $\rho$ is given by $\rho(x^\op)\hat y=\widehat{yx}$.

Let $M$ be a von Neumann algebra and fix a trace $\tau$ on $M$. Let $H$ be an $M$-$M$-bimodule and $\xi\in H$
a vector.
Then we can define an unbounded operator $l(\xi):\Lp^2(M)\rightarrow H$ by the formula $l(\xi)\hat x=\xi x$.
We say that a vector $\xi$ is left-bounded if $l(\xi)$ is a bounded operator. When $\xi,\eta\in H$ are
left-bounded vectors, then we can define an operator
$l(\xi)^\ast l(\eta):\Lp^2(M,\tau)\rightarrow\Lp^2(M,\tau)$. This operator commutes with
the right representation of $M$ on $\Lp^2(M,\tau)$, so it is of the form $\lambda(x)=l(\xi)^\ast l(\eta)$ for
some $x\in M$. We denote this $x\in M$ by $\inprod{\xi}{\eta}_M$.
The map $\inprod{\cdot}{\cdot}_M$ is called the (right) $M$-valued inner product on $H$. It is easy to see
that this
inner product satisfies $\inprod{x\xi y}{\eta z}_M = y^\ast\inprod{\xi}{x^\ast\eta}_M z$ for all $x,y,z\in M$,
and $\inprod{\xi}{\eta}_M^\ast = \inprod{\eta}{\xi}_M$. Moreover, the norm of $\xi$ is given by
$\norm{\xi}^2=\tau(\inprod{\xi}{\xi}_M)$.

We also consider the right multiplication operator $r(\xi)$ that is given by $r(\xi)\hat x=x\xi$.
A vector $\xi$ is said to be right-bounded if $r(\xi)$ is a bounded operator. For right-bounded operators
$\xi,\eta\in H$, we can also consider the operator $r(\xi)^\ast r(\eta)$ on $\Lp^2(M)$. This operator now
commutes with the left representation of $M$, so it is of the form $\rho(x^\op)^\ast$ for some $x\in M$.
This element $x$ is denoted by $\inprod_M{\xi}{\eta}$. The map
$\inprod_M{\cdot}{\cdot}$ is called the left $M$-valued inner product on $H$. It satisfies the relation
$\inprod_M{x\xi z}{y\eta}=x \inprod_M{\xi}{\eta z^\ast}y^\ast$.

We say that a vector is bi-bounded if it is both left and right-bounded. We denote the space of
all bi-bounded vectors in $H$ by $\bibounded{H}$. The space of all bi-bounded vectors is dense in $H$.

Let $H,K$ be two $M$-$M$ bimodules. The Connes tensor product $H\otimes_M K$ is the $M$-$M$ bimodule
defined as follows. On the algebraic tensor product
$\bibounded{H}\otimes^{alg}\bibounded{K}$, we define an inner product by the formula
\[\inprod{\xi\otimes\eta}{\zeta\otimes\theta}=\inprod{\eta}{\inprod{\xi}{\zeta}_M\theta}
=\inprod{\xi\inprod_M{\eta}{\theta}}{\zeta}
=\tau(\inprod_M{\eta}{\theta}^\ast\inprod{\xi}{\zeta}).
\]
The Connes tensor product $H\otimes_M K$ is the result of separation and completion of
$\bibounded{H}\otimes^{alg}\bibounded{K}$ with respect to this inner product.
Observe that when either $\xi$ is left-bounded or $\eta$ is right-bounded, the elementary tensor
$\xi\otimes\eta\in H\otimes_M K$ is well-defined. These elementary tensors satisfy the relation
$\xi x\otimes \eta=\xi \otimes x\eta$ for all $x\in M$.

\begin{lemma} Let $M$ be a finite von Neumann algebra and let $H,K$ be $M$-$M$ bimodules
  \begin{itemize}
  \item The set of left-bounded vectors in $H$ with the property that $\inprod{\xi}{\xi}_M$ is a
    projection in $M$, densely spans $H$.
  \item The set of elementary tensors $\xi\otimes\eta$ where $\inprod{\xi}{\xi}_M$ and
    $\inprod_M{\eta}{\eta}$ are the same projection in $M$, densely spans $H\otimes_M K$.
  \end{itemize}
\end{lemma}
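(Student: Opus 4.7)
The plan is to prove both parts by polar decomposition plus spectral approximation. Throughout, left-bounded vectors are dense in $H$ by a standard argument, and the algebraic span of elementary tensors $\xi\otimes\eta$ with $\xi$ left-bounded and $\eta$ right-bounded is dense in $H\otimes_M K$ by the construction of the Connes tensor product.

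\textbf{First bullet.} Given a left-bounded $\xi\in H$, set $a=\inprod{\xi}{\xi}_M\geq 0$, so that $l(\xi)^\ast l(\xi)=\lambda(a)$. Take the polar decomposition $l(\xi)=v\lambda(a^{1/2})$; then $v$ commutes with the right $M$-action, so $v=l(\xi_0)$ for a left-bounded $\xi_0\in H$ whose $M$-inner product equals the support projection $p=\text{supp}(a)$. Identifying $\xi=l(\xi)\hat1$ with $\xi_0 a^{1/2}$, approximate $a^{1/2}$ in operator norm by step functions $b_\varepsilon=\sum_i t_i^{(\varepsilon)} q_i^{(\varepsilon)}$ where the $q_i^{(\varepsilon)}$ are pairwise orthogonal spectral projections of $a$. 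Each $q_i^{(\varepsilon)}\leq p$, so $\inprod{\xi_0 q_i^{(\varepsilon)}}{\xi_0 q_i^{(\varepsilon)}}_M=q_i^{(\varepsilon)}p q_i^{(\varepsilon)}=q_i^{(\varepsilon)}$, a projection. The norm estimate
\[\|\xi-\xi_0 b_\varepsilon\|^2=\tau\bigl((a^{1/2}-b_\varepsilon)^\ast p(a^{1/2}-b_\varepsilon)\bigr)\leq\|a^{1/2}-b_\varepsilon\|^2\tau(p)\longrightarrow 0\]
finishes the first claim.

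\textbf{Second bullet.} Start with $\xi$ left-bounded and $\eta$ right-bounded and write, as above, $\xi=\xi_0 a^{1/2}$ with $\inprod{\xi_0}{\xi_0}_M=p:=\text{supp}(a)$. Then
\[\xi\otimes\eta=\xi_0\otimes a^{1/2}\eta=\xi_0\otimes\eta',\]
where $\eta':=a^{1/2}\eta$ is right-bounded with left $M$-inner product $b':=a^{1/2}ba^{1/2}$ for $b=\inprod_M{\eta}{\eta}$. Apply the symmetric polar decomposition to $\eta'$ to get $\eta'=b'^{1/2}\eta'_0$ with $\inprod_M{\eta'_0}{\eta'_0}=p':=\text{supp}(b')\leq p$. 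Thus
\[\xi\otimes\eta=\xi_0 b'^{1/2}\otimes\eta'_0.\]

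Now spectrally approximate $b'^{1/2}\approx\sum_i t_i r_i$ in operator norm, with $r_i$ pairwise orthogonal spectral projections of $b'$, all satisfying $r_i\leq p'\leq p$. Then
\[\sum_i t_i\,(\xi_0 r_i)\otimes(r_i\eta'_0)=\sum_i t_i\,\xi_0 r_i\otimes\eta'_0=\xi_0\!\Bigl(\sum_i t_i r_i\Bigr)\otimes\eta'_0,\]
using $r_i^2=r_i$ and $\xi_0 r_i\otimes r_i\eta'_0=\xi_0 r_i\otimes\eta'_0$. Since $r_i\leq p$ and $r_i\leq p'$, both $\inprod{\xi_0 r_i}{\xi_0 r_i}_M$ and $\inprod_M{r_i\eta'_0}{r_i\eta'_0}$ equal the same projection $r_i$, exactly as required. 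The approximation error is
\[\bigl\|\xi_0 b'^{1/2}\otimes\eta'_0-\textstyle\sum_i t_i\xi_0 r_i\otimes\eta'_0\bigr\|^2=\inprod{\eta'_0}{c^\ast p\, c\,\eta'_0}=\inprod{\eta'_0}{c^\ast c\,\eta'_0}\leq\|c\|^2\|\eta'_0\|^2,\]
where $c=b'^{1/2}-\sum_i t_i r_i$ (using that $c$ lies in the corner $p'Mp'\subset pMp$), which tends to zero.

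\textbf{Main obstacle.} The nontrivial point is the second bullet: it is not enough to apply the first bullet to each tensor factor separately, since that would give projections $p_\xi$ and $p_\eta$ with no relation to each other. The trick is to perform the polar decomposition on $\xi$ first, push $a^{1/2}$ across the tensor, then polar-decompose the resulting $\eta'$; the single positive element $b'^{1/2}$ that survives in the middle can then be split by its \emph{own} spectral projections, which simultaneously cut down the projection of the left factor and the projection of the right factor to the \emph{same} $r_i$.
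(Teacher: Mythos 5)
Your proof is correct, and it diverges from the paper's after the first step. You share with the paper the initial move: a polar-type decomposition $\xi=\xi_0a^{1/2}$ where $\inprod{\xi_0}{\xi_0}_M$ is the support projection $p$ of $a=\inprod{\xi}{\xi}_M$ (the paper builds $\xi_0$ by hand as the $\norm{\cdot}$-limit of the vectors $\xi\,\tfrac{n}{nx+1}$ with $x=a^{1/2}$, rather than quoting the polar decomposition of the right-modular operator $l(\xi)$, but this is the same idea). From there the paper finishes algebraically: it writes the remaining positive element as $\tfrac12\norm{x}(u_++u_-)$ with $u_\pm$ unitaries, so every left-bounded vector is an \emph{exact} linear combination of two vectors with projection-valued inner product, and in the second bullet it conjugates by the partial isometries $v_\pm=vu_\pm$ (with $v_\pm v_\pm^\ast=p$, $v_\pm^\ast v_\pm=q$) to get again an exact two-term identity. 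You instead approximate the positive element in operator norm by step functions of its own spectral projections, which produces arbitrarily many terms but only density --- which is all the lemma claims. Your handling of the second bullet is the genuinely different part and is rather clean: pushing $a^{1/2}$ across the tensor sign, polar-decomposing $\eta'=a^{1/2}\eta$ so that a single positive element $b'^{1/2}$ sits in the middle, and then cutting with its spectral projections $r_i\le p'\le p$ forces both $M$-valued inner products to equal the same $r_i$ automatically, avoiding the paper's bookkeeping with minimal supporting projections and the commutation of $u_\pm$ with $q$. The trade-off is that the paper's route yields a stronger exact finite decomposition of each elementary tensor, while yours is a softer approximation argument; both fully establish the density statement.
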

\begin{proof}
  To prove the first point, we will show that every left-bounded vector $\xi\in H$ is the sum of two
  vectors $\xi_1,\xi_2$ with the property that $\inprod{\xi_i}{\xi_i}_M$ is a projection for $i=1,2$.
  Set $x=\sqrt{\inprod{\xi}{\xi}_M}$ and denote
  $\xi_n=\xi(\frac{n}{nx+1})$. Observe that $x_n:=\sqrt{\inprod{\xi_n}{\xi_n}_M}=\frac{nx}{nx+1}$
  increases to $p=\chara_{(0,\infty)}(x)$. In particular, it follows that $x_n$ converges to
  $p$ in $\norm{\cdot}_{2}$ and hence is a \hbox{$\norm{\cdot}_2$-Cauchy} sequence.
  But $\norm{\xi_n-\xi_m}=\norm{x_n-x_m}_2$ for all $n,m\in\IN$. So also the sequence $(\xi_n)_n$
  is a Cauchy sequence, and hence has a limit $\eta$. This limit satisfies
  $\inprod{\eta}{\eta}_M=p$ and moreover, $\xi=\eta x$. It suffices now to observe that $x$ is the linear
  combination $x=\frac12\norm{x}(u_+ + u_-)$ of the two unitaries
  $u_{\pm}=\frac{x}{\norm{x}}\pm i\sqrt{1-\frac{x^2}{\norm{x}^2}}$.

  For the second point, we will show that every elementary tensor product $\xi\otimes\eta$
  where $\xi$ is left-bounded and $\eta$ is right-bounded, can be written as a linear combination
  of two elementary tensors $\xi_i\otimes\eta_i$ ($i=1,2$) with the property that $\inprod{\xi_i}{\xi_i}_M$
  and $\inprod_M{\eta_i}{\eta_i}$ are the same projection ($i=1,2$). As in the proof of the first point,
  we find vectors $\zeta\in H$ and $\theta\in K$ and $x\in M$ such that
  $\xi\otimes\eta=\zeta\otimes x\theta$ and such that $p=\inprod{\zeta}{\zeta}_M$ and
  $q=\inprod_M{\theta}{\theta}$ are projections. We can assume that $p$ is the smallest projection in $M$
  that satisfies $px=x$ and similarly that $q$ is minimal with the property that $xq=x$. Write the
  polar decomposition of $x$ by $x=v\abs{x}$. Then we know that $vv^\ast=p$, $v^\ast v=q$ and $\abs{x}\in M$
  is a positive element. We write $\abs{x}=\frac12\norm{x}(u_+ + u_-)$ where
  $u_{\pm}=\frac{\abs{x}}{\norm{x}}\pm i\sqrt{1-\frac{\abs{x}^2}{\norm{x}^2}}$ as before. Observe that $u_\pm$ commutes
  with $\abs{x}$ and hence with its support $q$. In particular, we get that $v_\pm=vu_\pm$ satisfies
  $v_\pm v_\pm^\ast=p$ and $v_\pm^\ast v_\pm=q$. Now we see that the vectors $\xi_1=\xi_2=\zeta$, $\eta_1=v_+\theta$ and
  $\eta_2=v_-\theta$ satisfy our condition: $\xi\otimes\eta=\xi_1\otimes\eta_1 + \xi_2\otimes\eta_2$.
\end{proof}

\section{The relative Gaussian Construction}
\label{sect:rg}
In this section we define the relative Gaussian construction, giving formal definitions for all the concepts.
For a more accessible account of the construction, we refer to the introduction.

For the rest of this section, fix the following data.
\begin{itemize}
\item a finite von Neumann algebra $M$ with a trace $\tau$
\item an $M$-$M$ bimodule $H$
\item a self-adjoint $M$-$M$ bimodular contraction $F:H\otimes_M H\rightarrow H\otimes_M H$
that satisfies the braid relation
\[(F\otimes 1_{H})(1_H\otimes F)(F\otimes 1_H)=(1_H\otimes F)(F\otimes 1_H)(1_H\otimes F).\]
\end{itemize}

We denote the $n$-fold Connes tensor product by $H^{(n)}=H\otimes_M H \ldots \otimes_M H$.
By convention we write $H^{(0)}=\Lp^2(M)$ and $H^{(1)}=H$.  We will write
the identity operator of $H^{(n)}$ by $1_n$. On each $H^{(n)}$,
we define $M$-$M$ bimodular operators
$\left(F^{(n)}_\sigma\right)_{\sigma\in \Perm_n}$ in the following way.

We freely use the notations
introduced in subsection \ref{prelim:perm}.
For a transposition
$t=(i,i+1)$ of consecutive numbers, we set
\[F^{(n)}_t=-1_{i-1}\otimes F\otimes 1_{n-i-1}.\]
If $\sigma=t_{i_1}\ldots t_{i_k}$ is a decomposition of minimal length, then
the $M$-$M$ bimodular operator
\[F^{(n)}_\sigma=F^{(n)}_{t_{i_1}}\ldots F^{(n)}_{t_{i_k}}\]
does not depend on the choice of the (minimal) decomposition.
When $n,m\in\IN$, we also write $F_{n,m}=F_{\sigma_{n,m}}$ where $\sigma_{n,m}$ is as defined by
(\ref{eq:sigmanm}) in subsection \ref{prelim:perm}.

We set
\[D^{(n)}=\sum_{\sigma\in\Perm_n}F^{(n)}_\sigma.\]
By \cite[theorem 1.1]{BSp:Coxeter}, this operator $D^{(n)}$ is positive definite.
By convention, we set $D^{(0)}=1$ and $D^{(1)}=1_H$.


We define a new inner product on $H^{(n)}$ by the formula $\inprod{\xi}{\eta}_F=\inprod{\xi}{D^{(n)}\eta}$.
Apply separation/completion to this new inner product and denote the resulting $M$-$M$
bimodule by $H^{(n)}_F$.
Consider the natural map $I^{(n)}:H^{(n)}\rightarrow H^{(n)}_F$ and observe that this
is a bounded
$M$-$M$ bimodular operator that has dense range. We denote the identity operator on $H^{(n)}_F$ by $\id_n$.

\begin{lemma}
  For every $n,m\in\IN$, we get the following results
  \begin{itemize}
  \item The identity operator from on $H^{(n+m)}$ extends uniquely to a bounded operator
    \[I_{n,m}:H^{(n)}_F\otimes_M H^{(m)}_F\rightarrow H^{(n+m)}_F.\]
  \item The operator $F_{n,m}$ on $H^{(n+m)}$ extends uniquely to a contraction
    \[F_{n,m}:H^{(n)}_F\otimes_M H^{(m)}_F\rightarrow H^{(m)}_F\otimes_M H^{(n)}.\]
  \end{itemize}
\end{lemma}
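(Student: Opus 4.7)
The plan is to express both statements as operator inequalities on $H^{(n+m)}$ by identifying the deformed inner products explicitly, and then to reduce them to the structure of $D^{(n+m)}$ relative to $D^{(n)} \otimes D^{(m)}$.

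First, I would check that, on a dense subspace of bi-bounded vectors, the Connes tensor product inner product on $H^{(n)}_F \otimes_M H^{(m)}_F$ coincides with the form $\inprod{\xi}{(D^{(n)} \otimes D^{(m)})\xi}_{H^{(n+m)}}$. This uses that $D^{(n)}$ and $D^{(m)}$ are $M$-$M$ bimodular and positive, and is routine from the definitions of the $M$-valued inner products on $H^{(n)}_F$ and $H^{(m)}_F$. The corresponding description of $H^{(n+m)}_F$ is immediate from the definition: the inner product is $\inprod{\xi}{D^{(n+m)}\xi}_{H^{(n+m)}}$. Writing $A = D^{(n)} \otimes D^{(m)}$, the two parts of the lemma thus reduce to
\[D^{(n+m)} \leq C \cdot A \qquad \text{and} \qquad F_{\sigma_{n,m}}^{\ast} (D^{(m)} \otimes D^{(n)}) F_{\sigma_{n,m}} \leq A\]
as operator inequalities on $H^{(n+m)}$.

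The main computational input is the length-additive coset decomposition $\Perm_{n+m} = V_{n,m} \cdot (\Perm_n \times \Perm_m)$ from Subsection \ref{prelim:perm}. Since $\sigma \mapsto F_\sigma$ is multiplicative on length-additive products, this yields the factorisation
\[D^{(n+m)} = \sum_{\sigma_0 \in V_{n,m}} F_{\sigma_0} \cdot (D^{(n)} \otimes D^{(m)}) = S_{n,m} \cdot A, \qquad S_{n,m} := \sum_{\sigma_0 \in V_{n,m}} F_{\sigma_0}.\]
For (1), I would then apply Cauchy-Schwarz in the positive semidefinite form $\inprod{\cdot}{A\cdot}$: for $\xi \in H^{(n+m)}$,
\[\inprod{\xi}{D^{(n+m)}\xi} = \sum_{\sigma_0 \in V_{n,m}} \inprod{F_{\sigma_0}^{\ast}\xi}{A\xi} \leq \sum_{\sigma_0 \in V_{n,m}} \inprod{F_{\sigma_0}^{\ast}\xi}{A F_{\sigma_0}^{\ast}\xi}^{1/2} \inprod{\xi}{A\xi}^{1/2},\]
reducing the claim to a uniform bound $F_{\sigma_0} A F_{\sigma_0}^{\ast} \leq C \cdot A$ for each $\sigma_0 \in V_{n,m}$. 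Such a bound should follow by expressing $F_{\sigma_0}$ via its minimal word in the transpositions $F_{(k,k+1)}$ and using the braid relation to move these past the tensor factors of $A$, eventually reducing to the contractivity $\|F\|\leq 1$.

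For (2), $\sigma_{n,m}$ is the longest element of $V_{n,m}$. By iterated application of the braid relation on its minimal word, I would establish the intertwining identity
\[F_{\sigma_{n,m}} \cdot (D^{(n)} \otimes D^{(m)}) = (D^{(m)} \otimes D^{(n)}) \cdot F_{\sigma_{n,m}},\]
which expresses that braiding the first $n$ factors past the last $m$ exchanges the two block structures of $A$. Combined with $F_{\sigma_{n,m}}^{\ast} F_{\sigma_{n,m}} \leq 1$ (since $F_{\sigma_{n,m}}$ is a product of $nm$ transposition operators of norm one), this yields the operator inequality required for (2). The main obstacle is carrying out this braid-relation manipulation cleanly; the natural route is an induction on $\min(n,m)$ using Lemma \ref{lem:perm}, peeling off one elementary swap at a time and reducing to a smaller case.
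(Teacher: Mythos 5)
Your reduction of both parts to operator inequalities against $A=D^{(n)}\otimes D^{(m)}$, and the factorisation $D^{(n+m)}=S_{n,m}\,A$ over the coset representatives $V_{n,m}$, are exactly the paper's starting point. Part (2) of your sketch is essentially the paper's argument: the intertwining $F_{n,m}A=(D^{(m)}\otimes D^{(n)})F_{n,m}$ is what the paper proves (directly from $\sigma_{n,m}(\sigma_1\times\sigma_2)=(\sigma_2\times\sigma_1)\sigma_{n,m}$ with additive lengths, which is quicker than peeling off elementary swaps by induction); you should just record that the intertwining forces $F_{n,m}^{\ast}F_{n,m}$ to commute with $A$, so that $F_{n,m}^{\ast}(D^{(m)}\otimes D^{(n)})F_{n,m}=A^{1/2}F_{n,m}^{\ast}F_{n,m}A^{1/2}\leq A$.

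The genuine gap is in part (1): the uniform bound $F_{\sigma_0}AF_{\sigma_0}^{\ast}\leq C\,A$ for $\sigma_0\in V_{n,m}$ is false for general braided self-adjoint contractions, and contractivity of $F$ cannot produce it, because $A$ may be degenerate: contractivity only gives $F_{\sigma_0}AF_{\sigma_0}^{\ast}\leq\norm{A}$, which does not dominate a multiple of a positive operator with kernel. Concretely, take $M=\IC$, $H=\ell^2$, $F=-\Sigma$ with $\Sigma$ the tensor flip (a self-adjoint contraction satisfying the braid relation), $n=2$, $m=1$, and write $\Sigma_{ij}$ for the operator exchanging the $i$-th and $j$-th legs of $H^{\otimes 3}$. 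Then $A=1+\Sigma_{12}$, and for $\sigma_0=(2,3)\in V_{2,1}$ one gets $F_{\sigma_0}AF_{\sigma_0}^{\ast}=\Sigma_{23}(1+\Sigma_{12})\Sigma_{23}=1+\Sigma_{13}$; on $\xi=(e_1\otimes e_2-e_2\otimes e_1)\otimes e_1$ the form $\inprod{\xi}{A\xi}$ vanishes while $\inprod{\xi}{(1+\Sigma_{13})\xi}=3$, so no constant $C$ exists (and with $F=-q\Sigma$ the best constant blows up as $q\rightarrow1$). This degeneracy is not exotic: in the paper's main theorem $F$ is a nonzero projection, so $D^{(2)}=1-F$ already has a kernel, i.e.\ your Cauchy--Schwarz route collapses precisely in the cases the lemma is needed for. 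The repair is the paper's trick of squaring first: by self-adjointness $D^{(n+m)}=S_{n,m}A=AS_{n,m}^{\ast}$, hence $\bigl(D^{(n+m)}\bigr)^2=AS_{n,m}^{\ast}S_{n,m}A\leq\binom{n+m}{n}^2A^2$ because $\norm{S_{n,m}}\leq\abs{V_{n,m}}=\binom{n+m}{n}$, and operator monotonicity of the square root then gives $D^{(n+m)}\leq\binom{n+m}{n}A$, with no comparison of any conjugate $F_{\sigma_0}AF_{\sigma_0}^{\ast}$ against $A$ ever needed.
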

\begin{proof}
  To prove the first point, observe that $D^{(n+m)}=E_{n,m}\left(D^{(n)}\otimes D^{(m)}\right)$. In particular,
  \[\left(D^{(n+m)}\right)^2=\left(D^{(n)}\otimes D^{(m)}\right)E_{n,m}^\ast E_{n,m}\left(D^{(n)}\otimes D^{(m)}\right)\leq \binom{n+m}{n}^2\left(D^{(n)}\otimes D^{(m)}\right)^2.\]
  Since the square root is an operator-monotone function, we also get that
  \[D^{(n+m)}\leq\binom{n+m}{n}(D^{(n)}\otimes D^{(m)})\]
  and hence that
  \[\norm{\xi}_{H^{(n+m)}_F}\leq\binom{n+m}{n}\norm{\xi}_{H^{(n)}_F\otimes_M H^{(m)}_F}.\]
  for all $\xi\in H^{(n+m)}$. So indeed, the identity operator on $H^{(n+m)}$ extends uniquely to a bounded operator
  \[I_{n,m}:H^{(n)}\otimes_M H^{(m)}\rightarrow H^{(n+m)}.\]
  The norm of this operator is less than $\binom{n+m}{n}$.


  As to the last point, remark that
  \[\sigma_{n,m}(\sigma_1\times\sigma_2)=(\sigma_2\times\sigma_1)\sigma_{n,m}\]
  for all permutations $\sigma_1\in\Perm_n$ and $\sigma_2\in\Perm_m$. Moreover, counting the
  number of inversions, it is easy to see that
  \[\abs{\sigma_{n,m}(\sigma_1\times\sigma_2)}=\abs{\sigma_{n,m}}+\abs{\sigma_1}+\abs{\sigma_2}=\abs{(\sigma_2\times\sigma_1)\sigma_{n,m}}.\]
  It follows that $F_{n,m}\left(D^{(n)}\otimes D^{(m)}\right)=\left(D^{(m)}\otimes D^{(n)}\right)F_{n,m}$.
  In particular, we see that $F_{n,m}^\ast F_{n,m}$ commutes with $D^{(n)}\otimes D^{(m)}$
  and hence with the square root $\left(D^{(n)}\otimes D^{(m)}\right)^{\frac12}$. So we see that
  \begin{align*}
    \norm{F_{n,m}\xi}_{H^{(m)}_F\otimes_MH^{(n)}_F}
    &=\inprod{\xi}{F_{n,m}^\ast \left(D^{(m)}\otimes D^{(n)}\right)F_{n,m}\xi}\\
    &=\inprod{\xi}{F_{n,m}^\ast F_{n,m} \left(D^{(n)}\otimes D^{(m)}\right)\xi}\\
    &=\inprod{\xi}{\left(D^{(m)}\otimes D^{(n)}\right)^{\frac12}F_{n,m}^\ast F_{n,m}\left(D^{(m)}\otimes D^{(n)}\right)^{\frac12}\xi}\\
    &\leq\inprod{\xi}{\left(D^{(n)}\otimes D^{(m)}\right)\xi}\\
    &=\norm{\xi}_{H^{(n)}_F\otimes_MH^{(m)}_F}.
  \end{align*}
  for all $\xi\in H^{(n+m)}_F$. So $F_{n,m}$ extends uniquely to a contraction
  \[F_{n,m}:H^{(n)}_F\otimes_M H^{(m)}_F\rightarrow H^{(m)}_F\otimes_M H^{(n)}_F.\]
\end{proof}

\begin{definition}
The relative Fock space of $M$, $H$ and $F$ is defined to be
\[\cF_M(H,F)=\bigoplus_i H_F^{(n)}.\]
We denote the algebraic direct sum by
\[\cF^0_M(H,F)=\bigoplus^{alg}_i H^{(n)}_F.\]
\end{definition}

Now we can define creation and annihilation operators. Let $T:H^{(n)}_F\rightarrow H^{(m)}_F$ be a bounded
right $M$-modular operator. Then we define a right-$M$-modular operator
$L_0(T):\cF^0_M(H,F)\rightarrow \cF_M^0(H,F)$ by the relation
that
\begin{align*}
  L_0(T)\eta&=I_{m,k}(T\otimes \id_{k})I_{n,k}^\ast\eta&&\text{ whenever }\eta\in H^{(n+k)}_F\\
  L_0(T)\eta&=0&&\text{ whenever }\eta\in H^{(k)}_F\text{ with }k<n.
\end{align*}
Each of the operators $I_{m,k}$ is bounded, but as $k\rightarrow\infty$, their norm may tend to $\infty$.
This is the case for the classical Gaussian construction. In such a case, the operators $L_0(T)$
need not be bounded. But the operators $L_0(T)$ are closable because they have a densely defined adjoint:
$L_0(T^\ast)\subset L_0(T)^\ast$.
\begin{definition} The creation operator on $\cF_M(H,F)$ associated to $T$ is the closure $L(T)$ of the
  densely defined operator $L_0(T)$.
\end{definition}
For every right-bounded vector in $\xi\in H$,
we get that the formula $l(\xi)\hat x=\xi x$ defines a bounded right-$M$-modular map $l(\xi):\Lp^2(M)\rightarrow H$.
The operator $L(l(\xi))$ is called the creation operator of $\xi$. We also denote this operator by $L(\xi)$.
The operator $L(l(\xi)^\ast)=L(\xi)^\ast$ is the corresponding annihilation operator.

\begin{proposition}
  \label{prop:L}
  The creation operators satisfy the following composition rule: for bounded right-$M$-modular operators
  $S:H^{(n_1)}_F\rightarrow H^{(m_1)}_F$ and $T:H^{(n_2)}_F\rightarrow H^{(m_2)}_F$, we get that
  \begin{equation}
    \label{eqn:prodL}
    L(S)L(T)=
    \sum_{k=0}^{\min(n_1,m_2)}L\left(
      \begin{aligned}
        &I_{m_1,m_2-k}(S\otimes \id_{m_2-k})(I_{k,n_1-k}\otimes\id_{m_2-k})\\
        &\qquad\qquad\qquad(\id_{k}\otimes F_{m_2-k,n_1-k})\\
        &\qquad(I_{k,m_2-k}^\ast\otimes\id_{n_1-k})(T\otimes\id_{n_1-k})I_{n_2,n_1-k}
      \end{aligned}\right).
  \end{equation}
  In particular, the space
  \[\cT_M(H,F)=\Span\{L(T)\mid T:H^{(n)}_F\rightarrow H^{(m)}_F\text{ for some }n,m\in\IN\}\]
  is a $\ast$-algebra whose commutant is the right action of $M$ on $\cF_M(H,F)$.
\end{proposition}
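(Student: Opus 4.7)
The plan is to establish the composition formula~(\ref{eqn:prodL}) by unpacking $L(S)L(T)$ from the definition of $L_0$ and then applying Lemma~\ref{lem:perm} in a crucial combinatorial step. Fix $\eta\in H^{(n_2+j)}_F$; by definition
\[L(S)L(T)\eta = I_{m_1,j'}(S\otimes\id_{j'})\,\bigl[I_{n_1,j'}^\ast I_{m_2,j}\bigr]\,(T\otimes\id_j)\,I_{n_2,j}^\ast\eta,\]
where $j'=m_2+j-n_1$, with the understanding that the expression vanishes when $j'<0$. The only non-obvious factor is $X_j:=I_{n_1,j'}^\ast I_{m_2,j}\colon H^{(m_2)}_F\otimes H^{(j)}_F\to H^{(n_1)}_F\otimes H^{(j')}_F$; the remaining pieces match exactly what is needed to build each $A_k$ once $X_j$ is decomposed.

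To analyze $X_j$, I would exploit the identities $D^{(n+m)}=E_{n,m}(D^{(n)}\otimes D^{(m)})=(D^{(n)}\otimes D^{(m)})E_{n,m}^\ast$ on $H^{(n+m)}$, where $E_{n,m}=\sum_{\sigma\in V_{n,m}}F_\sigma$, using self-adjointness of $D^{(n+m)}$ for the second equality. Combined with the fact that $I_{n,m}$ is the identity map on the pre-completion space $H^{(n+m)}$, this identifies $I_{n,m}^\ast$, modulo the separation/completion procedure, with the operator induced by $E_{n,m}^\ast$. Since $n_1+j'=m_2+j$, this identifies $X_j$ on the ambient space $H^{(n_1+j')}=H^{(m_2+j)}$ with $E_{n_1,j'}^\ast$; all the combinatorial content is concentrated there.

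I would then apply Lemma~\ref{lem:perm} to decompose $V_{n_1,j'}$ relative to the refined three-block partition $\{1,\ldots,n_1+j'\}=\{1,\ldots,m_2\}\sqcup\{m_2+1,\ldots,m_2+j\}$, rather than the native split at $n_1$. Every $\sigma\in V_{n_1,j'}$ is classified by the integer $k\in\{\max(0,n_1-j),\ldots,\min(n_1,m_2)\}$ counting how many elements of $\sigma(\{1,\ldots,n_1\})$ lie in $\{1,\ldots,m_2\}$, and the lemma's factorization $\sigma=(\sigma_1\times\sigma_2)(\id\times\sigma_{\cdot,\cdot}\times\id)$ with $\sigma_1\in V_{k,n_1-k}$ and $\sigma_2\in V_{m_2-k,j-n_1+k}$ gives, after summing and re-collecting each parabolic block-sum into an $I$-gluing via the identity $D^{(a+b)}=E_{a,b}(D^{(a)}\otimes D^{(b)})$, exactly the factorization
\[(I_{k,n_1-k}\otimes\id_{m_2-k})\,(\id_k\otimes F_{m_2-k,n_1-k})\,(I_{k,m_2-k}^\ast\otimes\id_{n_1-k})\]
that appears inside $A_k$. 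Substituting back into the display above and recognizing the $k$-th summand as $L(A_k)\eta$ completes the proof of the composition rule. The hardest part will be exactly this combinatorial matching between Lemma~\ref{lem:perm} and the second block structure: tracking tensor positions, adjoints on $I$'s, the range of $k$, and the trivial vanishing of the terms with $k<n_1-j$ on the current level, all need to line up with the statement.

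For the final assertion: closure of $\cT_M(H,F)$ under $\ast$ is immediate from $L(T)^\ast=L(T^\ast)$ with $T^\ast\colon H^{(m)}_F\to H^{(n)}_F$ the Hilbert-space adjoint, while closure under composition is what has just been established. Each $L(T)$ is right-$M$-modular by construction, so the right action of $M$ on $\cF_M(H,F)$ is contained in $\cT_M(H,F)'$. Conversely, $\Omega\in H^{(0)}_F=\Lp^2(M)$ is cyclic for $\cT_M(H,F)$ (operators of the form $L(l(\xi))$ with $\xi$ a left-bounded vector in some $H^{(n)}_F$ already produce a dense subspace of every $H^{(n)}_F$ when applied to $\Omega$) and separating for the right $M$-action (because $\tau$ is faithful), so any operator commuting with $\cT_M(H,F)$ is uniquely determined by its value on $\Omega$, which lies in $M\Omega$; this forces the commutant to equal the right action of $M$.
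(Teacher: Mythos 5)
Your overall route is exactly the paper's: unpack $L(S)L(T)$ on each level, isolate the cross term $I_{n_1,j'}^\ast I_{m_2,j}$, identify $I_{n,m}^\ast$ with the operator induced by $E_{n,m}^\ast$ via $D^{(n+m)}=E_{n,m}(D^{(n)}\otimes D^{(m)})$, and decompose that sum over $V_{n_1,j'}$ with Lemma~\ref{lem:perm}; the commutant argument (cyclicity of $\Omega$ plus determination by $T\Omega$) is also the paper's. You even supply more justification for the translation step than the paper does. However, in the step you yourself flag as the crux, the bookkeeping as written is wrong. Applying Lemma~\ref{lem:perm} to $V_{n_1,j'}$ with the refined split at $m_2$ (i.e.\ $n=m_2$, the lemma's $k$ equal to $n_1-m_2$, $m=j'$, and your $k$ equal to $m_2-l$) produces factorizations $\sigma=(\sigma_1\times\sigma_2)\tau_k$ with $\sigma_1\in V_{k,m_2-k}$ acting on $\{1,\ldots,m_2\}$ and $\sigma_2\in V_{n_1-k,\,j+k-n_1}$ acting on $\{m_2+1,\ldots,m_2+j\}$, not $\sigma_1\in V_{k,n_1-k}$, $\sigma_2\in V_{m_2-k,\,j-n_1+k}$ as you state (your blocks respect the $(n_1,j')$ split, and already for $n_1=2$, $m_2=1$, $j=2$ no single shuffle makes your product sets match $V_{2,1}$). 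This matters for the recollection step: after taking adjoints, $E_{n_1,j'}^\ast=\sum_k F_{\tau_k}^\ast\,(E_{k,m_2-k}^\ast\otimes E_{n_1-k,j+k-n_1}^\ast)$, so it is the \emph{starred} gluings $I_{k,m_2-k}^\ast\otimes I_{n_1-k,j+k-n_1}^\ast$ that arise from the parabolic block-sums, the shuffle becomes $\id_k\otimes F_{m_2-k,n_1-k}\otimes\id$, and the un-starred $I_{k,n_1-k}\otimes I_{m_2-k,j+k-n_1}$ come for free because on the ambient space they are identities. With that correction your argument is the paper's proof; as written, the claimed recollection of $V_{k,n_1-k}$-sums into $I$-gluings does not go through.

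Two smaller points. In the commutant part you assert that $T\Omega$ lies in $M\Omega$; this needs the short argument the paper gives: commutation with the annihilation operators $L(l(\xi)^\ast)$ forces $\inprod{\xi}{T\Omega}=0$ for every $\xi\in H^{(k)}_F$ with $k>0$, so $T\Omega\in\Lp^2(M)$, and then commutation with the left action of $M$ together with boundedness of $T$ shows $T\Omega=\hat x$ for some $x\in M$. Also, you use $L(T)^\ast=L(T^\ast)$ as an equality of (possibly unbounded) operators, whereas the construction only gives $L_0(T^\ast)\subset L_0(T)^\ast$; the paper glosses this as well, but for the $\ast$-algebra claim one should at least say that the $\ast$-operation on $\cT_M(H,F)$ is implemented by $T\mapsto T^\ast$ in this containment sense.
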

\begin{proof}
  It is clear from the definition of $I_{n,m}$ that, for all $n,m,l\in\IN$,
  \[I_{n+m,l}(I_{n,m}\otimes\id_l)=I_{n,m+l}(\id_n\otimes I_{m,l}),\]
  since both are the unique extension of the identity operator to a bounded linear map from $H^{(n)}_F\otimes_M H^{(m)}_F\otimes_M H^{(l)}_F$
  to $H^{(n+m+l)}_F$.

  Translating lemma \ref{lem:perm} in terms of the operators $I_{n,m}$ and $F_{n,m}$ yields the following relation
  \begin{align*}
    I_{n_1,l-n_1}^\ast I_{m_2,l-m_2}=\sum_{k=\max(0,n_1+m_2-l)}^{\min{n_1,m_2}}
    \begin{aligned}
      &(I_{k,n_1-k}\otimes I_{m_2-k,l+k-n_1-m_2})\\
      &(\id_k\otimes F_{m_2-k,n_1-k}\otimes \id_{l+k-n_1-m_2})\\
      &\qquad(I^\ast_{k,m_2-k}\otimes I^\ast_{n_1-k,l+k-n_1-m_2})
      \end{aligned}
  \end{align*}

  It is clear that both the left hand side and the right hand side of (\ref{eqn:prodL}) give $0$ when
  they are evaluated in an $\eta\in H^{(l)}_F$ with $l<n_2$ or $l<n_2+n_1-m_2$. Let $l\geq \max(n_1,m_2)$ and take a vector
  $\eta\in H^{(l-m_2+n_2)}$. Then we compute that
  \begin{align*}
    L(S)L(T)\eta&=I_{m_1,l-n_1}(S\otimes \id_{l-n_1})I_{n_1,l-n_1}^\ast I_{m_2,l-m_2}(T\otimes\id_{l-m_2})I_{n_2,l-m_2}^\ast\eta\\[2mm]
    &=\sum_{k=\max(0,n_1+m_2-l)}^{\min{n_1,m_2}}
    \begin{aligned}
      &I_{m_1,l-n_1}(S\otimes \id_{l-n_1})(I_{k,n_1-k}\otimes I_{m_2-k,l+k-n_1-m_2})\\
      &\qquad\qquad(\id_k\otimes F_{m_2-k,n_1-k}\otimes \id_{l+k-n_1-m_2})\\
      &\qquad(I^\ast_{k,m_2-k}\otimes I^\ast_{n_1-k,l+k-n_1-m_2})(T\otimes\id_{l-m_2})I_{n_2,l-m_2}^\ast\eta
    \end{aligned}\\[2mm]
    &=\sum_{k=\max(0,n_1+m_2-l)}^{\min{n_1,m_2}}
    \begin{aligned}
      &I_{m_1+m_2-k,l+k-n_1-m_2}(I_{m_1,m_2-k}\otimes\id_{l+k-n_1-m_2})\\
      &\qquad\qquad(S\otimes \id_{l-n_1})(I_{k,n_1-k}\otimes \id_{l-n_1})\\
      &\qquad\qquad(\id_k\otimes F_{m_2-k,n_1-k}\otimes \id_{l+k-n_1-m_2})\\
      &\qquad\qquad(I^\ast_{k,m_2-k}\otimes \id_{l-m_2})(T\otimes\id_{l-m_2})\\
      &\qquad(I_{n_2,n_1-k}^\ast\otimes\id_{l+k-n_1-m_2})I_{n_1+n_2-k,l+k-m_2-n_1}^\ast\eta
    \end{aligned}\\[2mm]
    &=\sum_{k=0}^{\min(n_1,m_2)}
    L\left(\begin{aligned}
        &I_{m_1,m_2-k}(S\otimes \id_{m_2-k})(I_{k}{n_1-k}\otimes\id_{m_2-k})\\
        &\qquad\qquad(\id_{k}\otimes F_{m_2-k,n_1-k})\\
        &\qquad(I_{k,m_2-k}^\ast\otimes\id_{n_1-k})(T\otimes\id_{n_1-k})I_{n_2,n_1-k}
      \end{aligned}\right)\eta.
  \end{align*}
  as claimed.

  It remains to show that the commutant of $\cT_M(H,F)$ is just the right action of $M$. It is clear that right multiplication by $M$ commutes with every operator in $\cT_M(H,F)$.
  Suppose that $T\in\Bounded(\cF_M(H,F))$ satisfies $TL(S)\subset L(S)T$ for all bounded right-$M$-modular operators $S:H^{(n)}_F\rightarrow H^{(m)}_F$. Let $\xi\in H^{(k)}_F$
  be a left-bounded vector. Then we see that
  \[T\xi=TL(l(\xi))\Omega=L(l(\xi))T\Omega,\]
  so $T$ is completely determined by its value $T\Omega$. On the other hand,
  \[\inprod{\xi}{T\Omega}=\inprod{\Omega}{L(l(\xi)^\ast)T\Omega}=\inprod{\Omega}{TL(l(\xi)^\ast)\Omega}=0\]
  for every $\xi\in H^{(k)_F}$ with $k>0$. Hence, $T\Omega\in\Lp^2(M)$. Since $T$ is a bounded
  operator, it is clear that $T\Omega=\hat x$ for some $x\in M$, so $T$ is given by right multiplication by $x$.
\end{proof}

We write $\pcT_M(H,F)$ for the bicommutant of $\cT_M(H,F)$, i.e.\@ all the bounded linear operators that commute with the right action of $M$.
As a von Neumann algebra, this is not a very interesting object, since $\pcT_M(H,F)\cong \Bounded(\ell^2(\IN))\otimes M$, but later on we will consider more interesting subalgebras.

Consider the orthogonal projection $P:\cF_M(H,F)\rightarrow\Lp^2(M)$, and observe that $PTP^\ast\in\Bounded(\Lp^2(M))$ commutes with the right action
of $M$, for all $T\in\pcT_M(H,F)$. So we can define a map $E:\pcT_M(H,F)\rightarrow M$ by the formula $E(T)=PTP^\ast$.
\begin{lemma}
  The map $E:\pcT_M(H,F)\rightarrow M$ is a normal conditional expectation, but not faithful.
\end{lemma}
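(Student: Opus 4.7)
The plan is first to check that $E$ is well-defined: for $T\in\pcT_M(H,F)$, the operator $PTP^\ast$ acts on $\Lp^2(M)$ and commutes with the right action of $M$ there, because $P$ itself commutes with the right action of $M$ on $\cF_M(H,F)$ (the right action preserves the grading since $\xi \cdot x\in H^{(n)}_F$ whenever $\xi\in H^{(n)}_F$, so in particular it preserves $H^{(0)}_F=\Lp^2(M)$). Hence $PTP^\ast=\lambda(x)$ for a unique $x\in M$, and we set $E(T)=x$. Linearity is immediate, and normality follows because $T\mapsto PTP^\ast$ is normal and $\lambda:M\to \Bounded(\Lp^2(M))$ is a normal isomorphism onto its image.

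Next I would verify the defining properties of a conditional expectation onto $M\subset\pcT_M(H,F)$. Positivity is immediate: $E(T^\ast T)=(TP^\ast)^\ast(TP^\ast)\geq 0$. For $a\in M$ acting on the left, $a$ preserves each $H^{(n)}_F$ (the left action acts on the leftmost tensor factor), so $P$ commutes with the left action of $M$; in particular $PaP^\ast=a\restrict{\Lp^2(M)}=\lambda(a)$, giving $E(a)=a$. The same commutation gives $M$-bimodularity:
\[E(aTb)=PaTbP^\ast=aPTP^\ast b=aE(T)b\qquad\text{for all }a,b\in M,\ T\in\pcT_M(H,F).\]
Together with positivity and $E(1)=1$, this shows $E$ is a normal conditional expectation from $\pcT_M(H,F)$ onto $M$.

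Finally, to show $E$ is not faithful, assume $H\neq 0$ (otherwise $\cF_M(H,F)=\Lp^2(M)$ and the statement is vacuous). Let $Q=1-P$ be the orthogonal projection onto $\bigoplus_{n\geq 1}H^{(n)}_F$. Because the right action of $M$ preserves the grading, $Q$ commutes with it and hence $Q\in\pcT_M(H,F)$. Clearly $Q\neq 0$, yet $E(Q)=PQP^\ast=0$, witnessing the failure of faithfulness. The main (essentially the only) subtlety is verifying that $P$ commutes with the two one-sided actions of $M$, which reduces to the grading-preservation observation above.
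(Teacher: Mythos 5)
Your proof is correct and follows essentially the same route as the paper: verify that the compression $T\mapsto PTP^\ast$ lands in $\lambda(M)$, is normal, unital, positive and $M$-$M$ bimodular (the paper compresses this into one sentence, using that $P$ intertwines the left actions), and then exhibit a nonzero positive element in the kernel of $E$. The only difference is the witness for non-faithfulness: you take the projection $1-P^\ast P$ onto $\bigoplus_{n\geq1}H^{(n)}_F$, whereas the paper takes $L(T^\ast)PL(T)$ for a nonzero bounded right-$M$-linear $T:H^{(n)}_F\rightarrow\Lp^2(M)$ with $n\neq0$; both witnesses are valid (and yours is, if anything, slightly simpler), with the same implicit assumption $H\neq 0$ in either case.
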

\begin{proof}
  It is clear that $E$ is a normal unital completely positive map. The map $E$ is also $M$-$M$ bimodular because $P$ intertwines the left actions of $M$.
  To show that $E$ is not faithful, take any nonzero bounded right-$M$ linear map $T:H^{(n)}_F\rightarrow \Lp^2(M)$ with $n\not=0$. Then we see that $L(T^\ast)PL(T)\in\pcT_M(H,F)$ is a non-zero positive
  bounded operator with \[E(L(T^\ast)PL(T))=0.\]
\end{proof}

The relative Gaussian construction will be a special kind of subalgebra of $\pcT_M(H,F)$ on which $E$ is faithful.
In order to define the relative Gaussian construction, we need one more piece of data, namely an anti-unitary operator $J:H\rightarrow H$ that satisfies the following relations
\begin{itemize}
\item $J$ is an involution, i.e.\@ $J^2=\id_1$.
\item $J$ intertwines the left and right representations of $M$ on $H$, i.e.
  \[J(x\xi y)=y^\ast J(\xi) x^\ast\text{ for all }x,y\in M\text{ and }\xi\in H.\]
\item $J$ is compatible with $F$ in the sense that
  \begin{equation}
    \label{eqn:compat}
    (l(\zeta)^\ast\otimes \id_1) F (J\eta\otimes \xi)=J (l(\xi)^\ast)\otimes 1)F (\eta\otimes \zeta),
  \end{equation}
  for all bi-bounded vectors $\xi,\zeta\in H$ and all vectors $\eta\in H$.
\end{itemize}

Then we want to define the algebraic relative Gaussian construction $\Gauss_M(H,F,J)$ to be the subalgebra of $\cT_M(H,F)$ that is generated
by the left action of $M$ and by elements of the form $W(\xi)=L(\xi)+L(J\xi)^\ast$ where $\xi$ is a bi-bounded vector in $H$.
The relative Gaussian construction $\ppGauss_M(H,F,J)$ will then be the von Neumann algebra generated by $\Gauss_M(H,F,J)$. Observe that $J\xi$
is a left-bounded vector if and only if $\xi$ is right-bounded. More precisely, $l(J\xi)\hat x=Jr(\xi)x^\ast$. In particular, we get that $\inprod{J\xi}{J\eta}_M=\inprod_M{\xi}{\eta}$.
We consider the elements of $M$ to
be reduced words of length $0$ and elements of the form $W(\xi)$ are interpreted as reduced words of length $1$. These reduced words satisfy
$x\Omega=\hat x\in \Lp^2(M)\subset \cF_M(H,F)$ and $W(\xi)\Omega=\xi$. We will define longer ``reduced words'' $W(\xi)$ for vectors $\xi\in H^{(n)}_F$ by a generalization of
the Wick formula. The operator $W(\xi)$ will be the unique element in $\Gauss_M(H,F,J)$ that satisfies $W(\xi)\Omega=\xi$.

We extend $J$ to an anti-unitary operator $\widetilde J$ on $\cF_M(H,F)$. First, define an anti-unitary operator $J^{(n)}$ on $H^{(n)}$
by the relation that $J^{(n)}(\xi_1\otimes\ldots\otimes\xi_n)=J\xi_n\otimes\ldots\otimes J\xi_1$. Observe that this does indeed define an anti-unitary because by induction we get
\begin{align*}
  &\inprod{J^{(n)}(\xi_1\otimes\ldots\otimes \xi_n)}{J^{(n)}(\eta_1\otimes\ldots\otimes\eta_n)}\\
  &\qquad=\inprod{J^{(n-1)}(\xi_1\otimes\ldots\otimes\xi_{n-1})}{\inprod{J\xi_n}{J\eta_n}_MJ^{(n-1)}(\eta_1\otimes\ldots\otimes\eta_{n-1})}\\
  &\qquad=\inprod{J^{(n-1)}(\xi_1\otimes\ldots\otimes\xi_{n-1})}{J^{(n-1)}(\eta_1\otimes\ldots\otimes\eta_{n-1}\inprod_M{\eta_n}{\xi_n})}\\
  &\qquad=\inprod{(\eta_1\otimes\ldots\otimes\eta_{n-1})\inprod_M{\eta_n}{\xi_n}}{\xi_1\otimes\ldots\otimes\xi_{n-1}}\\
  &\qquad=\inprod{\eta_1\otimes\ldots\otimes\eta_n}{\xi_1\otimes\ldots\otimes\xi_n}
\end{align*}
By convention, we set $J^{(1)}=J$ and $J^{(0)}\hat x=\widehat{x^\ast}$.
The extended operator
\[\widetilde J:\cF_M(H,F)\rightarrow\cF_M(H,F)\]
is now the direct sum of the operators $J^{(n)}$. 

\begin{lemma}
  We get the following relation between $J^{(n)}$ and $F_\sigma^{(n)}$ for $\sigma\in\Perm_n$:
  \begin{equation}\label{eqn:JF}J^{(n)}F_\sigma^{(n)}J^{(n)}=F_{\gamma\sigma\gamma^{-1}},\end{equation}
  where $\gamma\in\Perm_n$ is defined by $\gamma(i)=n-i+1$.
\end{lemma}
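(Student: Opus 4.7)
The strategy is a twofold reduction: first from arbitrary $\sigma \in \Perm_n$ to single Coxeter generators $t_i$, and then from $n$ to $n=2$.

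\emph{Reduction to single transpositions.} A direct computation shows $\gamma t_i \gamma^{-1} = t_{n-i}$ for each $1 \le i \le n-1$, since $\gamma(i) = n-i+1$ and $\gamma(i+1) = n-i$. Thus conjugation by $\gamma$ is an automorphism of $\Perm_n$ that permutes the Coxeter generators, and hence preserves length; in particular, if $\sigma = t_{i_1} \cdots t_{i_k}$ is reduced, so is $\gamma\sigma\gamma^{-1} = t_{n-i_1} \cdots t_{n-i_k}$. Writing $F^{(n)}_\sigma = F^{(n)}_{t_{i_1}} \cdots F^{(n)}_{t_{i_k}}$ and $F^{(n)}_{\gamma\sigma\gamma^{-1}} = F^{(n)}_{t_{n-i_1}} \cdots F^{(n)}_{t_{n-i_k}}$, and inserting $(J^{(n)})^2 = \id_n$ between consecutive factors, equation (\ref{eqn:JF}) reduces to the single-transposition claim $J^{(n)} F^{(n)}_{t_i} J^{(n)} = F^{(n)}_{t_{n-i}}$ for each $1 \le i \le n-1$.

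\emph{Localising to $n=2$.} Both sides of this single-transposition claim are bimodular operators that act non-trivially only on a pair of adjacent tensor slots. Tested on an elementary tensor $\xi_1 \otimes \cdots \otimes \xi_n$ of bi-bounded vectors, and using that $J^{(n)}$ reverses the tensor order and applies $J$ to each factor (so slots $(i, i+1)$ migrate to $(n-i+1, n-i)$ in reversed order), both sides unwind to explicit expressions that differ only in whether $F$ acts on the original vectors $\xi_i \otimes \xi_{i+1}$ or on $J\xi_{i+1} \otimes J\xi_i$. Equating them reduces the whole problem to the single identity $J^{(2)} F = F J^{(2)}$ on $H \otimes_M H$.

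\emph{The $n=2$ case from compatibility.} By density of bi-bounded elementary tensors it suffices to check the matrix-element identity $\langle \xi \otimes \zeta, J^{(2)}FJ^{(2)}(\eta \otimes \theta)\rangle = \langle \xi \otimes \zeta, F(\eta \otimes \theta)\rangle$ against such $\xi \otimes \zeta$. Using $\langle Ju, v\rangle = \langle Jv, u\rangle$ (valid for the anti-unitary involution $J^{(2)}$) together with self-adjointness of $F$, the left side rearranges to $\langle J\theta \otimes J\eta, F(J\zeta \otimes J\xi)\rangle$. This is precisely the matrix-element content of the compatibility condition (\ref{eqn:compat}): pairing each side of (\ref{eqn:compat}) against a bi-bounded vector $\beta \in H$ via the right $M$-valued inner product and taking the trace yields the Connes-tensor inner product $\langle \zeta \otimes \beta, F(J\eta \otimes \xi)\rangle$ on one side and its $J$-twisted analogue on the other. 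The main obstacle is this last translation: one must carefully track the interplay between the right $M$-valued inner product, the trace on $M$, and the identification $\Lp^2(M) \otimes_M H \cong H$, so that the version of (\ref{eqn:compat}) stated in terms of $l(\zeta)^\ast$ and the modular conjugation $J$ on $\Lp^2(M)$ lines up with the required Connes-tensor inner product identity on $H \otimes_M H$.
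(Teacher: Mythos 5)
Your proposal is correct and takes essentially the same route as the paper: reduce via $\gamma t_i\gamma^{-1}=t_{n-i}$ to the Coxeter generators, localise the generator case to the commutation $J^{(2)}F=FJ^{(2)}$ on $H\otimes_M H$, and deduce that from the compatibility condition (\ref{eqn:compat}) reinterpreted as an inner-product identity. The one step you leave implicit --- passing from your rearranged matrix element $\inprod{J\theta\otimes J\eta}{F(J\zeta\otimes J\xi)}$ back to $\inprod{\xi\otimes\zeta}{F(\eta\otimes\theta)}$ --- is not a single invocation of (\ref{eqn:compat}) but requires applying that identity twice together with self-adjointness of $F$, which is precisely the short chain of equalities the paper writes out.
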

\begin{proof}
  Once we prove that $J^{(2)}$ commutes with $F$, it is clear that $J^{(n)}(\id_{k-1}\otimes F\id_{n-k-1})J^{(n)}=\id_{n-k-1}\otimes F\otimes \id_{k-1}$
  for all $1\leq k\leq n-1$. So (\ref{eqn:JF}) holds for the generators of $\Perm_n$, and hence
  for all of $\Perm_n$. The fact that $J^{(2)}$ commutes with $F$ follows from our compatibility relation between $F$ and $J$:
  observe that (\ref{eqn:JF}) is equivalent to the fact that, for all bi-bounded vectors $\xi_1,\xi_2,\eta_1,\eta_2$,
  we have that
  \[\inprod{\xi_1\otimes\xi_2}{F(J\eta_1\otimes\eta_2)}=\inprod{F(\eta_1\otimes\xi_1)}{\eta_2\otimes J\xi_2}.\]
  Now we see that
  \begin{align*}
    \inprod{\xi_1\otimes\xi_2}{J^{(2)}F(\eta_1\otimes\eta_2)}
    &=\inprod{F(\eta_1\otimes\eta_2)}{J\xi_2\otimes J\xi_1}\\
    &=\inprod{\eta_2\otimes\xi_1}{F(J\eta_1\otimes J\xi_2)}\\
    &=\inprod{F(\eta_2\otimes\xi_1)}{J\eta_1\otimes J\xi_2}\\
    &=\inprod{\xi_1\otimes\xi_2}{FJ^{(2)}(\eta_1\otimes\eta_2)}
  \end{align*}
\end{proof}

\begin{definition}
Let $n,m\in\IN$ and let $\xi\in H^{(n+m)}_F$. We define an unbounded operator $S_{n,m}(\xi):H^{(n)}_F\rightarrow H^{(m)}_F$
by setting $S_{n,m}(\xi)\eta=J^{(m)}(l(\eta)^\ast\otimes\id_m)I_{n,m}^\ast J^{(n+m)}\xi$ for all left-bounded vectors $\eta\in H^{(n)}$.

We say that a vector $\xi\in H^{(n)}_F$ is \emph{bounded} if $S_{k,n-k}(\xi)$ is a bounded operator for all integers $k=0,\ldots,n$.
The space of all bounded vectors in $H^{(n)}_F$ is denoted by $\bbH^{(n)}_F$.
\end{definition}
When $\xi\in H=H^{(1)}_F$, then we see that $S_{0,1}(\xi)=l(\xi)$ while $S_{1,0}(\xi)=l(J\xi)^\ast$.
In particular, a vector $\xi\in H=H^{(1)}_F$ is bounded if and only if it is bi-bounded, and a vector $\xi\in\Lp^2(M)=H^{(0)}_F$
is bounded if and only if $\xi=\hat x$ for some $x\in M$.
Observe that $S_{n,m}(x\xi y+\eta)=xS_{n,m}(\xi)y+S_{n,m}(\eta)$.
We can also describe $S_{n,m}(\xi)$ by the relation that
\[\inprod{\eta}{S_{n,m}(\xi)\zeta}=\inprod{I_{m,n}\left(\eta\otimes J^{(n)}\zeta\right)}{\xi}.\]

In the special case where $F=0$, we see that
\[L(S_{n,m}(\xi_1\otimes\ldots\otimes\xi_n\otimes\eta_1\otimes\ldots\otimes\eta_m))=L(\xi_1)\ldots L(\xi_n)L(J\eta_1)^\ast\ldots L(J\eta_m)^\ast,\]
for all sets of bi-bounded vectors $\xi_1,\ldots,\xi_n,\eta_1,\ldots,\eta_m\in H$.

\begin{definition}
  Let $\xi\in H^{(n)}_F$ be a bounded vector. Then we define the word of $\xi$ to be
  \begin{equation}
    \label{eqn:Wick}
    W(\xi)=\sum_{k=0}^n L(S_{k,n-k}(\xi))
  \end{equation}
\end{definition}
Observe that $W(\xi)\Omega=\xi$.
For a bi-bounded vector $\xi\in H$, the formula above reads $W(\xi)=L(\xi)+L(J\xi)^\ast$.
When $F=0$, the formula above reduces to the usual Wick formula
\[W(\xi_1\otimes\ldots\otimes \xi_n)=\sum_{k=0}^nL(\xi_1)\ldots L(\xi_k)L(J\xi_{k+1})^\ast\ldots L(J\xi_n)^{\ast}.\]
For this reason, we still call (\ref{eqn:Wick}) the Wick formula for the relative Gaussian construction.

We define bilinear maps $\boxtimes_k:\bbH^{(n+k)}_F\times\, \bbH^{(k+m)}_F\rightarrow H^{(n+m)}_F$
by the formula
\[\xi\boxtimes_k\eta=I_{n,m}(S_{k,n}(\xi)\otimes\id_m)I^\ast_{k,m}\eta,\]
or equivalently,
\[\inprod{\zeta\otimes\theta}{\xi\boxtimes_k\eta}=\inprod{I^{\ast}_{n,k}J^{(n+k)}\xi\otimes\theta}{J^{(n)}\theta\otimes I^{\ast}_{k,m}\eta}\]
for all $\zeta\in H^{(n)}_F$ and $\theta\in H^{(m)}_F$.
Observe that $J^{(n+m)}(\xi\boxtimes\eta)=J^{(n+k)}\eta\boxtimes J^{(m+k)}\xi$ for all bounded vectors $\xi\in \bbH^{(n+k)}_F,\eta\in\bbH^{(m+k)}_F$.

In the case where $F=0$,
these bilinear maps can also be described by
\[(\xi_1\otimes\xi_2)\boxtimes_k(\eta_1\otimes\eta_2)=\xi_1\otimes\inprod{J^{(k)}\xi_2}{\eta_1}_M\eta_2,\]
for all $\xi_1\in \bbH^{(n)}$, $\eta_1,\xi_2\in \bbH^{(k)}$ and $\eta\in\bbH^{(m)}$.

\begin{lemma}
  Let $\xi\in H^{(n)}_F$ be a bounded vector. Then $J^{(n)}\xi$ is still a bounded vector and
  \[W(\xi)^\ast=W(S\xi).\]

  Let $\xi\in H^{(n)}_F$ and $\eta\in H^{(m)}_F$ be bounded vectors. Then the vectors $\xi\boxtimes_k \eta$ are
  still bounded vectors, for all $k=0,\ldots,\min(n,m)$. Moreover, we get the following product formula
  \[W(\xi)W(\eta)=\sum_{k=0}^{\min(n,m)}W(\xi\boxtimes_k\eta).\]

  In particular, the space
  \[\Gauss_M(H,F,J)=\Span\{W(\xi)\mid \xi\in H^{(n)}_F\text{ is bounded, }n\in\IN\}\]
  is a $\ast$-algebra. We call this $\ast$-algebra the algebraic relative Gaussian construction.
\end{lemma}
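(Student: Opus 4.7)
My plan is to handle the three claims in sequence, each building on the previous. For the adjoint formula, I will reduce it to the pointwise identity
\[S_{p,q}(\xi)^{\ast} = S_{q,p}(J^{(p+q)}\xi) \qquad\text{for all }p+q = n.\]
Because the creation-operator construction satisfies $L(T)^{\ast} = L(T^{\ast})$ (since $L_0(T^{\ast}) \subset L_0(T)^{\ast}$ and $L$ is the closure of $L_0$), this pointwise identity immediately shows that $J^{(n)}\xi$ is bounded whenever $\xi$ is, and, after reindexing $k \mapsto n-k$ in the Wick sum, it yields $W(\xi)^{\ast} = W(J^{(n)}\xi)$. To prove the pointwise identity I would use the test-inner-product description
\[\inprod{\eta}{S_{p,q}(\xi)\zeta} = \inprod{I_{q,p}(\eta \otimes J^{(p)}\zeta)}{\xi},\]
take complex conjugates, and invoke antiunitarity of $J^{(p+q)}$. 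The computation reduces to the elementary relation $J^{(p+q)}(\zeta \otimes J^{(q)}\eta) = \eta \otimes J^{(p)}\zeta$ on elementary tensors, which is immediate from the definition of $J^{(n)}$ together with $J^{2} = \id$.

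For the product formula, I would apply proposition \ref{prop:L} to each term of
\[W(\xi) W(\eta) = \sum_{k,l} L(S_{k,n-k}(\xi))\,L(S_{l,m-l}(\eta))\]
with $(n_1, m_1, n_2, m_2) = (k, n-k, l, m-l)$, producing a triple sum indexed by $(k,l,j)$ where $j$ is the contraction index of the proposition. The target expression is
\[\sum_{s} W(\xi \boxtimes_s \eta) = \sum_{s,r} L\bigl(S_{r,\,n+m-2s-r}(\xi \boxtimes_s \eta)\bigr),\]
so the next step is to reindex. A count of source and target degrees forces the substitution $s = j$ and $r = k + l - j$. With $(s,r)$ fixed and $k$ varying (so $l = r+s-k$), the remaining task is to recognize the inner operator coming out of proposition \ref{prop:L}, summed over $k$, as the single operator $S_{r,\,n+m-2s-r}(\xi \boxtimes_s \eta)$ expanded from the definition of $\boxtimes_s$. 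The needed identity is the $I^{\ast}$-$I$ decomposition proved inside proposition \ref{prop:L} (itself a translation of lemma \ref{lem:perm}), applied now in reverse to collapse the partial sum in $k$. As a by-product, each $S_{r,\,n+m-2s-r}(\xi \boxtimes_s \eta)$ is exhibited as a finite linear combination of bounded operators, so $\xi \boxtimes_s \eta$ is a bounded vector.

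The $\ast$-algebra claim is then immediate from the adjoint and product formulas. The main obstacle is the combinatorial bookkeeping in the second step: aligning the triple sum from proposition \ref{prop:L} with the double sum $\sum_{s,r}$ and recognizing the braided inner operators as $S_{r,\,\cdot}(\xi \boxtimes_s \eta)$. The asymmetric form of $S_{p,q}$, with $J^{(p)}$ appearing on only one side, must be tracked carefully through each reindexing, and the braid relation built into the $F$-intertwiners is used tacitly throughout the matching.
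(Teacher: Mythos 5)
Your first step (the adjoint formula) is sound and matches the paper's route: reduce to $S_{p,q}(\xi)^\ast=S_{q,p}(J^{(p+q)}\xi)$ via the test-inner-product description of $S_{p,q}$; the only tacit ingredient is that $J^{(n)}$ genuinely descends to an anti-unitary of $H^{(n)}_F$ compatible with the maps $I_{n,m}$, which is supplied by the previously proved relation $J^{(n)}F^{(n)}_\sigma J^{(n)}=F^{(n)}_{\gamma\sigma\gamma^{-1}}$, so that is fine. Your combinatorial bookkeeping for the product formula is also correctly set up: the substitution $s=j$, $r=k+l-j$ is exactly the reindexing the paper performs.

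The genuine gap is in the claim that, for fixed $(s,r)$, the partial sum over $k$ of the inner operators produced by proposition \ref{prop:L} collapses to $S_{r,\,n+m-2s-r}(\xi\boxtimes_s\eta)$ using only the $I^\ast$-$I$ decomposition ``in reverse''. That decomposition (lemma \ref{lem:perm} translated to the $I_{n,m}$ and $F_{n,m}$) is purely a statement about the braiding operators and does not suffice here. The operator coming out of proposition \ref{prop:L} has the shape $(\cdots S_{k,n-k}(\xi)\cdots)(\id\otimes F_{\cdot,\cdot})(\cdots S_{l,m-l}(\eta)\cdots)$, with the anti-linear twist $J^{(\cdot)}$ hidden inside each $S$ on one side only; to reassemble these pieces into a single contraction $S_{r,\cdot}$ of the vector $\xi\boxtimes_s\eta$ you must pull the $J$-twisted contraction of $\xi$ \emph{through} the braiding $F_{\cdot,\cdot}$. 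This is not a permutation-group fact: it requires the compatibility hypothesis (\ref{eqn:compat}) between $J$ and $F$, extended by induction to all $F_{n,m}$ (the identity (\ref{eqn:FJmn}), the paper's step~1), and then an operator-level version (the identity (\ref{eqn:sAlg}), the paper's step~2) asserting that a contraction operator $T_{K,L}$ applied to one leg commutes appropriately with $\id\otimes F_{m,n}$ on the other; step~2 also handles the domain issue that one side is only a closed operator, so the identity must be verified on a dense set of elementary tensors of bi-bounded vectors. Your plan never invokes (\ref{eqn:compat}) at all, and ``the braid relation built into the $F$-intertwiners is used tacitly'' does not substitute for it: with a general self-adjoint contraction $F$ satisfying only the braid relation but not (\ref{eqn:compat}), the matching you describe is false, whereas your argument as written would go through verbatim when $F=0$ (where the $I^\ast$-$I$ decomposition really is all that is needed). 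Consequently the boundedness of $\xi\boxtimes_s\eta$, which you treat as a by-product of the matching, is also not yet established. To repair the proof you need to state and prove the $J$--$F$ interchange identity and its contraction-operator consequence before the reindexing step; with those in hand, the rest of your outline agrees with the paper's computation.
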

\begin{proof}
 In order to prove this theorem, it will be convenient to use the following variants of $S_{n,m}$ and $\boxtimes_k$.
 Let $K,L$ be $M$-$M$ bimodules. For a vector $\xi\in K\otimes_M \overline{L}$, we define an operator $T_{L,K}(\xi):L\rightarrow K$
 by the formula
 \[\inprod{\eta}{T_{L,K}(\xi)\zeta}=\inprod{\eta\otimes \overline{\zeta}}{\xi},\]
 for all $\eta\in K$ and for all left-bounded vectors $\zeta\in L$. This is not necessarily
 a bounded operator, but it is a closable, densely define unbounded operator. Indeed, $T_{L,K}$ is
 closable because its adjoint is given by
 \[T_{L,K}(\xi)^\ast\supset T_{K,L}(\overline{\xi}),\]
 where we identified $\overline{K\otimes_M\overline{L}}$ with $L\otimes_M\overline{K}$.
 We denote the closure of $T_{L,K}$ still by $T_{L,K}$.
 We say that $\xi$ is bounded for the tensor product decomposition $K\otimes_M \overline{L}$ if this operator $T_{L,K}(\xi)$ is bounded.

 Let $K, L_1, L_2$ be $M$-$M$ bimodules.
 For vectors $\xi\in L_1\otimes_M \overline{K}$ and $\eta\in K\otimes_M L_2$ that are bounded in these tensor product decompositions,
 we write $\xi\boxdot_K\eta=(T_{K,L_1}\otimes \id_{L_2})\eta$, or equivalently, $\xi\boxdot_K\eta$ is the unique vector in $L_1\otimes_M L_2$
 that satisfies
 \[\inprod{\zeta_1\otimes\zeta_2}{\xi\boxdot_K\eta}=\inprod{\overline{\xi}\otimes\zeta_2}{\overline{\zeta_1}\otimes\eta}.\]

 Observe that the anti-unitary operator $J:H\rightarrow H$ can also be interpreted as an $M$-$M$ bimodule
 isomorphism $J:H\rightarrow \overline{H}$. This is also true for the operators $J^{(n)}$ on $H^{(n)}_F$.
 The original $S_{n,m}$ and $\boxtimes_k$ can easily be expressed in terms of $T_{L,K}$ and $\boxdot_K$. 
 \begin{align*}
   S_{n,m}(\xi)&=T_{H^{(n)}_F,H^{(m)}_F}((\id_m\otimes J^{(n)})I^{\ast}_{m,n}\xi)&&\text{ for }\xi\in H^{(n+m)}_F\\
   \xi\boxtimes_k\eta&= I_{n,m}(((\id\otimes J^{(k)})I^{\ast}_{n,k}\xi)\boxdot(I^{\ast}_{k,m}\eta))&&\text{ for }\xi\in H^{(n+k)}_F\text{ and }\eta\in H^{(k+m)}_F.
 \end{align*}
 Moreover, a vector $\xi\in H^{(n)}_F$ is bounded if and only if $(\id_k\otimes J^{(n-k)})E^{\ast}_{n,n-k}\xi$ is bounded for the tensor product decomposition
 $H^{(n)}_F\otimes_M H^{(n-k)}_F$, for all $k=0,\ldots,n$.
 
 It is now clear that, whenever $\xi\in H^{(n)}\xi$ is a bounded vector, then $J^{(n)}\xi$ is still a bounded vector, and
 $S_{k,n-k}(\xi)^\ast=S_{n-k,k}(J^{(n)}\xi)$. It follows that $W(J^{(n)}\xi)=W(\xi)^\ast$.

 We prove the product formula in three steps.

 \textbf{step 1:} For all $n,m$ and for all bi-bounded vectors $\xi,\theta\in H^{(m)}$ and all vectors $\eta\in H^{(n)}$,
 we get
 \begin{equation}
   \label{eqn:FJmn}
   J^{(n)}(l(\xi)^\ast\otimes \id_n)F_{n,m}(\eta\otimes\zeta)=(l(\zeta)^\ast\otimes\id_n)F_{n,m}(J^{(n)}\eta\otimes\xi).
 \end{equation}
 Remark that (\ref{eqn:FJmn}) can be rewritten to the relation that
 \[\inprod{F_{n,m}(\eta\otimes\zeta)}{\xi\otimes J^{(n)}\theta}=\inprod{\zeta\otimes\theta}{F_{n,m}(J^{(n)}\eta\otimes\xi)},\]
 for all bi-bounded vectors $\xi,\zeta\in H^{(m)}_F$ and all vectors $\eta,\theta\in H^{(n)}_F$. Observe that,
 for fixed bi-bounded vectors $\xi,\zeta$, the left and right hand sides of the equation are continuous in $\eta,\theta$,
 so we only have to check this for dense sets of $\eta,\theta\in H^{(n)}_F$. 

 First we observe that the relation (\ref{eqn:FJmn}) is symmetric in $n,m$: if $F_{n,m}$ satisfies (\ref{eqn:FJmn}), then we see that,
 for all bi-bounded vectors $\xi,\zeta\in H^{(n)}_F$ and $\eta,\theta\in H^{(m)}_F$
 \begin{align*}
   \inprod{F_{m,n}(\eta\otimes\zeta)}{\xi\otimes J^{(m)}\theta}
   &=\inprod{\eta\otimes\zeta}{F_{n,m}(\xi\otimes J^{(m)}\theta)}\\
   &=\inprod{F_{n,m}(J^{(n)}\xi\otimes\eta)}{J^{(m)}\theta\otimes J^{(n)}\zeta}\\
   &=\inprod{\zeta\otimes\theta}{F_{m,n}(J^{(m)}\eta\otimes\xi)},
 \end{align*}
 where we used the fact that $F_{n,m}^\ast=F_{m,n}=J^{(n+m)}F_{n,m}J^{(n+m)}$. So also $F_{m,n}$ satisfies (\ref{eqn:FJmn}).
 
 We will prove (\ref{eqn:FJmn}) by induction on $m$ and $n$.
 So it suffices to show that $F_{n+k,m}$ satisfies (\ref{eqn:FJmn}), whenever $F_{n,m}$ and $F_{k,m}$ satisfy (\ref{eqn:FJmn}).
 Note that
 \[F_{n+k,m}=(\id_m\otimes I_{n,k})(F_{n,m}\otimes \id_k)(\id_n\otimes F_{k,m})(I_{n,k}^\ast\otimes\id_m).\]
 Let $\xi,\zeta\in H^{(m)}_F$ be bi-bounded vectors and let $\eta_1,\theta_2\in H^{(n)}$, $\eta_2,\theta_1\in H^{(k)}$ be vectors such that
 $\inprod{\eta_1}{\eta_1}_M = p = \inprod_M{\eta_2}{\eta_2}$ and such that $\inprod{\theta_1}{\theta_1}_M = q = \inprod_M{\theta_2}{\theta_2}$
 for projections $p,q\in M$. Let $(\chi_i)_i$ be an orthonormal basis for $pH^{(m)}_F q$. Then we compute that
 \begin{align*}
   &\inprod{(F_{n,m}\otimes \id_k)(\id_n\otimes F_{k,m})(\eta_1\otimes\eta_2\otimes\zeta)}{\xi\otimes J^{(n)}\theta_2\otimes J^{(k)}\theta_1}\\[2mm]
   &=\sum_i\begin{aligned}&\inprod{(F_{n,m}\otimes \id_k)(\eta_1\otimes\chi_i\otimes J^{(k)}\theta_1)}{\xi\otimes J^{(n)}\theta_2\otimes J^{(k)}\theta_1}\\
   &\qquad\inprod{(\id_n\otimes F_{k,m})(\eta_1\otimes\eta_2\otimes\zeta)}{\eta_1\otimes\chi_i\otimes J^{(k)}\theta_1}\end{aligned}\\[2mm]
   &=\sum_i\inprod{F_{n,m}(\eta_1\otimes\chi_i)}{\xi\otimes J^{(n)}\theta_2}
   \inprod{F_{k,m}(\eta_2\otimes\zeta)}{\chi_i\otimes J^{(k)}\theta_1}\\
   &=\sum_i \inprod{\zeta\otimes\theta_1}{F_{k,m}(J^{(k)}\eta_2\otimes\chi_i)}
   \inprod{\chi_i\otimes\theta_2}{F_{n,m}(J^{(n)}\eta_1\otimes\xi)}\\[2mm]
   &=\sum_i \begin{aligned}&\inprod{\zeta\otimes\theta_1\otimes\theta_2}{(F_{k,m}\otimes \id_n)(J^{(k)}\eta_2\otimes\chi_i\otimes\theta_2)}\\
   &\qquad\inprod{J^{(k)}\eta_2\otimes\chi_i\otimes\theta_2}{(\id_k\otimes F_{n,m})(J^{(k)}\eta_2\otimes J^{(n)}\eta_1\otimes\xi)}\end{aligned}\\[2mm]
   &=\inprod{\zeta\otimes\theta_1\otimes\theta_2}{(F_{k,m}\otimes \id_n)(\id_k\otimes F_{n,m})(J^{(k)}\eta_2\otimes J^{(n)}\eta_1\otimes\xi)}
 \end{align*}
 This implies that $F_{n+k,m}$ satisfies (\ref{eqn:FJmn}).

 \textbf{step 2:} Let $n,m\in\IN$ and let $L_1,L_2,K$ be $M$-$M$ bimodules. Then for all vectors
 $\xi\in L_1\otimes_M H^{(n)}_F\otimes_M \overline{K}$ and $\eta\in K\otimes_M H^{(m)}_F\otimes_M \overline{L_2}$
 that are bounded in all the tensor product decompositions above, we get that
 \begin{multline}
   \label{eqn:sAlg}
   T_{L_2\otimes_M H^{(n)}_F, L_1\otimes_M H^{(m)}_F}((\id_{L_1}\otimes\id_{H^{(m)}_F}\otimes J^{(n)}\otimes\id_{\overline{L_2}})(\id_{L_1}\otimes F_{n,m}\otimes\id_{\overline{L_2}})(\xi\boxdot_K\eta))\\
   =\left(T_{K\otimes_M H^{(n)}_F,L_1}\left((\id_{L_1}\otimes J^{(n)}\otimes\id_{\overline{K}})\xi\right)\otimes\id_m\right)\left(\id_K\otimes F_{m,n}\right)\left(T_{L_2,K\otimes_M H^{(m)}_F}(\eta)\otimes\id_n\right).
 \end{multline}

 The left-hand side in (\ref{eqn:sAlg}) is a closed operator while the right-hand side is bounded, so we only have to check (\ref{eqn:sAlg}) on a dense subset of
 the domain of the left-hand side. Let $\zeta_1\in H^{(n)}_F$, $\zeta_2\in L_2$, $\theta_1\in L_1$ and $\theta_2\in H^{(m)}$ be bi-bounded vectors. Then we have to show that
 \begin{align*}
   &\inprod{\theta_1\otimes\theta_2\otimes J^{(n)}\zeta_2\otimes\overline\zeta_1}{(\id_{L_1}\otimes F_{n,m}\otimes\id_{L_2})(\xi\boxdot_K\eta)}\\
   &=\inprod{T_{L_1,K\otimes_M H^{(n)}_F}\left(\overline{(\id_{L_1}\otimes J^{(n)}\otimes\id_{\overline{K}})\xi}\right)\theta_1\otimes\theta_2}{(\id_K\otimes F_{m,n})\left(T_{L_2,K\otimes_M H^{(m)}_F}(\eta)\zeta_1\otimes\zeta_2\right)}.
 \end{align*}
 For fixed bi-bounded vectors $\zeta_1,\zeta_2,\theta_1,\theta_2$, both the left hand side and the right hand side of the above expression is
 continuous in $\xi$, provided $\eta$ is bounded for both tensor product decompositions of $K\otimes_M H^{(m)}_F\otimes_M \overline{L_2}$. The same
 is true about continuity in the variable $\xi$. Hence we can assume that $\xi=\xi_1\otimes\xi_2\otimes\xi_2$ and $\eta=\eta_1\otimes\eta_2\otimes\eta_3$
 where all of $\xi_1,\xi_2,\xi_3,\eta_1,\eta_2,\eta_3$ are bi-bounded vectors.

 Now the left hand side reduces to
 \begin{align*}
   &\inprod{\theta_1\otimes\theta_2\otimes J^{(n)}\zeta_2\otimes\overline\zeta_1}{(\id_{L_1}\otimes F_{n,m}\otimes\id_{L_2})(\xi\boxdot_K\eta)}\\
   &=\inprod{\theta_1\otimes\theta_2\otimes J^{(n)}\zeta_2\otimes\overline\zeta_1}{(\id_{L_1}\otimes F_{n,m}\otimes\id_{L_2})(\xi_1\otimes\xi_2\otimes\inprod{\overline{\xi_3}}{\eta_1}_M\eta_2\otimes\eta_3)}\\
   &=\inprod{\inprod{\xi_1}{\theta_1}_M\theta_2\otimes J^{(n)}(\zeta_2)\inprod_M{\overline{\zeta_1}}{\eta_3}}{F_{n,m}(\xi_2\otimes\inprod{\overline{\xi_3}}{\eta_1}_M\eta_2)}\\
   &=\inprod{\inprod{\xi_1}{\theta_1}_M\theta_2\otimes J^{(n)}(\inprod{\overline{\eta_3}}{\zeta_1}_M\zeta_2)}{F_{n,m}(\xi_2\otimes\inprod{\overline{\xi_3}}{\eta_1}_M\eta_2)}\\
 \end{align*}

 The right hand side reduces to
 \begin{align*}
   &\inprod{T_{L_1,K\otimes_M H^{(n)}_F}(\overline{(\id_{L_1}\otimes J^{(n)}\otimes\id_{\overline{K}})\xi})\theta_1\otimes\theta_2}{(\id_K\otimes F_{m,n})(T_{L_2,K\otimes_M H^{(m)}_F}(\eta)\zeta_1\otimes\zeta_2)}\\
   &=\inprod{\overline{\xi_3}\otimes J^{(n)}\xi_2\otimes\inprod{\xi_1}{\theta_1}_M\theta_2}{(\id_K\otimes F_{m,n})(\eta_1\otimes\eta_2\otimes \inprod{\overline{\eta_3}}{\zeta_1}_M\zeta_2)}\\
   &=\inprod{J^{(n)}\xi_2\otimes\inprod{\xi_1}{\theta_1}_M\theta_2}{F_{m,n}(\inprod{\overline{\xi_3}}{\eta_1}_M\eta_2\otimes \inprod{\overline{\eta_3}}{\zeta_1}_M\zeta_2)}\\
   &=\inprod{F_{n,m}(J^{(n)}\xi_2\otimes\inprod{\xi_1}{\theta_1}_M\theta_2)}{\inprod{\overline{\xi_3}}{\eta_1}_M\eta_2\otimes \inprod{\overline{\eta_3}}{\zeta_1}_M\zeta_2}\\
 \end{align*}

 These expressions are equal by step 1.

 \textbf{step 3:} An elementary but rather tedious computation using step 2 and proposition \ref{prop:L} shows that
 $\xi\boxtimes_k \eta$ is indeed a bounded vector for all $k$, whenever $\xi\in H^{(n)}_F,\eta\in H^{(m)}_F$ are bounded vectors. Moreover we get that
 \[W(\xi)W(\eta)=\sum_{k=0}^{\min(n,m)}W(\xi\boxtimes_k \eta).\]
 \def\niets{
 Let $\xi\in H^{(n)}$ and $\eta\in H^{(m)}$. Then we can make the following computation:
 \begin{equation*}
   W(\xi)W(\eta)
   =\sum_{i=0}^n\sum_{j=0}^m L(S_{i,n-i}(\xi))L(S_{m-j,j}(\eta)).
 \end{equation*}
 By lemma \ref{lem:prodL}, this is equal to
 \begin{multline*}
   \sum_{i=0}^n\sum_{j=0}^m\sum_{k=0}^{\min(i,j)}
   L\left(\begin{aligned}
       &I_{n-i,j-k}(S_{i,n-i}(\xi)\otimes\id_{j-k})(I_{k,i-k}\otimes\id_{j-k})\\
       &\qquad\qquad(\id_k\otimes F_{i-k,j-k})\\
       &\qquad(I_{k,j-k}^\ast\otimes\id_{i-k})(S_{m-j,j}(\eta)\otimes\id_{i-k})I^{\ast}_{m-j,i-k}
     \end{aligned}\right)
 \end{multline*}
 Writing $S_{i,n-i}$ and $S_{m,m-j}$ in terms of operators $T_{K,L}$, we see that this is equal to
 \begin{equation*}
   \sum_{i=0}^n\sum_{j=0}^m\sum_{k=0}^{\min(i,j)}
   L\left(\begin{aligned}
       &I_{n-i,j-k}\left(T_{H^{(i)}_F,H^{(n-i)}_F}\left((\id_{n-i}\otimes J^{(i)})I_{n-i,i}^\ast\xi\right)\otimes\id_{j-k}\right)\\
       &\qquad\quad(I_{k,i-k}\otimes\id_{j-k})(\id_k\otimes F_{i-k,j-k})(I_{k,j-k}^\ast\otimes\id_{i-k})\\
       &\qquad\left(T_{H^{(m-j)}_F,H^{(j)}_F}\left((\id_j\otimes J^{(m-j)})I_{j,m-j}^\ast\eta\right)\otimes\id_{i-k}\right)I^{\ast}_{m-j,i-k}
       \end{aligned}\right)
 \end{equation*}
 Using the fact that $ST_{K,L}(\xi)=T_{K,\widetilde L}((S\otimes \id_K)\xi)$ for every bounded operator $S:L\rightarrow \widetilde L$,
 we see that this is equal to
 \[\sum_{i=0}^n\sum_{j=0}^m\sum_{k=0}^{\min(i,j)}
 L\left(\begin{aligned}
     &I_{n-i,j-k}\\
     &\left(T_{H^{(k)}_F\otimes_M H^{(i-k)}_F,H^{(n-i)}_F}\left((\id_{n-i}\otimes J^{(i-k)}\otimes J^{(k)})I_{n-i,i-k,k}^\ast\xi\right)\otimes\id_{j-k}\right)\\
     &(\id_k\otimes F_{i-k,j-k})\\
     &\left(T_{H^{(m-j)}_F,H^{(k)}_F\otimes_M H^{(j-k)}_F}\left((\id_j\otimes J^{(m-j)})I_{k,j-k,m-j}^\ast\eta\right)\otimes\id_{i-k}\right)\\
     &I^{\ast}_{m-j,i-k}\\
   \end{aligned}\right)
 \]
 In step 2 we showed that this is equal to
 \[
 \sum_{i=0}^n\sum_{j=0}^m\sum_{k=0}^{\min(i,j)}
 L\left(\begin{aligned}&I_{n-i,j-k}\\
     &T_{L_1, L_2}\left(\begin{aligned}
     &(\id_{n-i}\otimes\id_{j-k}\otimes J^{(i-k)}\otimes \id_{m-j})\\
     &(\id_{n-i}\otimes F_{j-k,i-k}\otimes \id_{m-j})\\
     &\left(\begin{aligned}&((\id_{n-i}\otimes \id_{i-k}\otimes J^{(k)})I_{n-i,i-k,k}^\ast\xi)\\&\boxdot_{H^{(k)}_F} ((\id_j\otimes J^{(m-j)})I_{k,j-k,m-j}^\ast\eta)\end{aligned}\right)
     \end{aligned}\right)\\
     &I^{\ast}_{m-j,i-k}\end{aligned}\right),
 \]
 where $L_1=H^{(m-j)}_F\otimes_M H^{(i-k)}_F$ and $L_2=H^{(n-i)}_F\otimes H^{(j-k)}_F$.

 Again using the fact that $ST_{K,L}(\xi)=T_{K,\widetilde L}((S\otimes \id_K)\xi)$ for every bounded operator $S:L\rightarrow \widetilde L$,
 this is the same as
 \[\sum_{i=0}^n\sum_{j=0}^m\sum_{k=0}^{\min(i,j)}
 L\left(T_{H^{(l_1)}_F, H^{(l_2)}_F}\left(\begin{aligned}
 &(\id_{l_2}\otimes J^{(l_1)})
 (I_{n-i,j-k}\otimes I_{i-k,m-j})\\
 &(\id_{n-i}\otimes F_{j-k,i-k}\otimes \id_{m-j})
 (I_{n-i,i-k}^\ast\otimes I_{j-k,m-j}^\ast)\\
 &\left(((\id_{n-k}\otimes J^{(k)})I_{n-k,k}^\ast\xi) \boxdot_{H^{(k)}_F} (I_{k,m-k}^\ast\eta)\right)
 \end{aligned}\right)\right),
 \]
 where $l_1=m-j+i-k$ and $l_2=n-i+j-k$.

 Changing the summation variables to $k$, $l=m-j+i-k$ and $r=n-i$, we obtain
 \[\sum_{k=0}^{\min(n,m)}\sum_{l=0}^{n+m-2k}\sum_{r=\max(0,n-l-k)}^{\min(l_2,n-k)}
 L\left(T_{H^{(l)}_F, H^{(l_2)}_F}\left(\begin{aligned}
       &(\id_{l_2}\otimes J^{(l)})
       (I_{r,l_2-r}\otimes I_{n-k-r,r+l+k-n})\\
       &(\id_{r}\otimes F_{l_2-r,n-r-k}\otimes \id_{r+l+k-n})\\
       &(I_{r,n-r-k}^\ast\otimes I_{l_2-r,r+l+k-n}^\ast)\\
       &\left(((\id_{n-k}\otimes J^{(k)})I_{n-k,k}^\ast\xi) \boxdot_{H^{(k)}_F} (I_{k,m-k}^\ast\eta)\right)
     \end{aligned}\right)\right),\]
 where we write $l_2=m+n-2k-l$.


 Using (\ref{eq:IIast}) from the proof of lemma \ref{lem:prodL}, this reduces to
 \[
 \sum_{k=0}^{\min(n,m)}\sum_{l=0}^{n+m-2k}
 L\left(T_{H^{(l)}_F, H^{(m+n-2k-l)}_F}\left(\begin{aligned}
       &(\id_{l}\otimes J^{(m+n-2k-l)})
       I_{l,m+n-2k-l}^\ast\\
       &I_{n-k,m-k}
       (((\id_{n-k}\otimes J^{(k)})I_{n-k,k}^\ast\xi) \boxdot_{H^{(k)}_F} (I_{k,m-k}^\ast\eta))
     \end{aligned}\right)\right)\]

 Rewriting this in terms of $S_{k,l}$ and $\boxtimes_k$, we obtain
   \[\sum_{k=0}^{\min(n,m)}\sum_{l=0}^{n+m-2k}
   L(S_{l,m+n-2k-l}(\xi\boxtimes_k\eta))=
   \sum_{k=0}^{\min(n,m)}W(\xi\boxtimes_k\eta)
 \]}
\end{proof}

\begin{definition}
  The relative Gaussian construction $\ppGauss_M(H,F,J)$ is the von Neumann algebra generated by
  the spectral projections of the operators in $\Gauss_M(H,F,J)$.
\end{definition}

It follows from the result above that $\bbH^{(n)}_F$ is dense in $H^{(n)}_F$. In particular, $\Omega$
is a cyclic vector for $\ppGauss_M(H,F,J)$. We define a state $\varphi$ on $\ppGauss_M(H,F,J)$ by
the relation that $\varphi(x)=\inprod{\Omega}{x\Omega}$.
\begin{theorem}
  The state $\varphi$ defined above is a faithful normal trace on $\ppGauss_M(H,F,J)$ that satisfies $\varphi=\tau\circ E$. In particular,
  the conditional expectation $E:\ppGauss_M(H,F,J)\rightarrow M$ is faithful.
\end{theorem}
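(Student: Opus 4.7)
The plan is to deduce everything from a single Tomita-type identity on the cyclic vector $\Omega$. First, $\varphi=\tau\circ E$ is a direct computation: since $\Omega=\hat 1$ lies in $\Lp^2(M)=H^{(0)}_F\subset\cF_M(H,F)$, the projection $P\colon\cF_M(H,F)\to\Lp^2(M)$ satisfies $P^\ast\Omega=\Omega$, so
\[\varphi(x)=\inprod{\Omega}{x\Omega}=\inprod{\Omega}{PxP^\ast\Omega}=\inprod{\hat 1}{\widehat{E(x)}}=\tau(E(x)).\]
Normality of $\varphi$ is automatic, as it is a vector state. Once $\varphi$ is shown to be faithful on $\ppGauss_M(H,F,J)$, faithfulness of the restriction of $E$ to this subalgebra follows: $E(x^\ast x)=0$ implies $\varphi(x^\ast x)=\tau(E(x^\ast x))=0$, and by faithfulness of $\varphi$ we then get $x=0$.

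The heart of the argument is the identity
\[\widetilde J(x\Omega)=x^\ast\Omega\qquad\text{for every }x\in\ppGauss_M(H,F,J).\]
At the algebraic level this is immediate: the previous lemma shows that $\Gauss_M(H,F,J)$ is linearly spanned by the Wick operators $W(\xi)$ with $\xi\in\bbH^{(n)}_F$, and for such a generator the formulas $W(\xi)^\ast=W(J^{(n)}\xi)$ and $W(\eta)\Omega=\eta$ give $W(\xi)^\ast\Omega=J^{(n)}\xi=\widetilde J(W(\xi)\Omega)$. To extend the identity to the bicommutant I would approximate $x\in\ppGauss_M(H,F,J)$ by a norm-bounded net $x_\alpha\in\Gauss_M(H,F,J)$ converging strong-$\ast$ to $x$; such a net exists by Kaplansky density applied to the self-adjoint subalgebra $\Gauss_M(H,F,J)\subset\ppGauss_M(H,F,J)$. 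Then both $x_\alpha\Omega\to x\Omega$ and $x_\alpha^\ast\Omega\to x^\ast\Omega$ in norm, and continuity of $\widetilde J$ yields the identity in the limit.

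Granted this identity, the remaining conclusions are formal from the elementary relation $\inprod{\widetilde J a}{b}=\inprod{\widetilde J b}{a}$, valid for any antiunitary involution. The trace property reads
\[\varphi(xy)=\inprod{x^\ast\Omega}{y\Omega}=\inprod{\widetilde J(x\Omega)}{y\Omega}=\inprod{\widetilde J(y\Omega)}{x\Omega}=\inprod{y^\ast\Omega}{x\Omega}=\varphi(yx),\]
and faithfulness reduces to showing that $\Omega$ is separating: if $x\Omega=0$, then $x^\ast\Omega=\widetilde J(x\Omega)=0$, so $(xy)^\ast\Omega=y^\ast x^\ast\Omega=0$ and therefore $xy\Omega=\widetilde J((xy)^\ast\Omega)=0$ for every $y\in\ppGauss_M(H,F,J)$. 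Cyclicity of $\Omega$ then makes $\{y\Omega\}$ dense in $\cF_M(H,F)$, forcing $x=0$. The only delicate point in the plan is the need for strong-$\ast$ (rather than merely strong) convergence in the density step, which is precisely what Kaplansky's theorem delivers for a self-adjoint subalgebra of a von Neumann algebra.
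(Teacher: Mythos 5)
Your overall skeleton is the same as the paper's in spirit: everything is made to hinge on the anti-unitary $\widetilde J$ together with the Wick calculus facts $W(\xi)\Omega=\xi$ and $W(\xi)^\ast=W(J^{(n)}\xi)$, and your formal deductions of $\varphi=\tau\circ E$, of traciality from $\inprod{\widetilde Ja}{b}=\inprod{\widetilde Jb}{a}$, and of faithfulness from the separating-vector argument are all fine. The gap is in the one step you flagged as ``the heart of the argument'': the extension of $\widetilde J(x\Omega)=x^\ast\Omega$ from $\Gauss_M(H,F,J)$ to $\ppGauss_M(H,F,J)$ by Kaplansky density. In the generality in which this theorem is stated, the Wick operators $W(\xi)$ (indeed already the creation operators $L(T)$) need not be bounded -- the paper points this out explicitly, citing the classical Gaussian construction -- and this is precisely why $\ppGauss_M(H,F,J)$ is \emph{defined} as the von Neumann algebra generated by the \emph{spectral projections} of the operators in $\Gauss_M(H,F,J)$. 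So $\Gauss_M(H,F,J)$ is in general not a self-adjoint subalgebra of bounded operators sitting $\sigma$-weakly densely inside $\ppGauss_M(H,F,J)$, and Kaplansky's theorem cannot be invoked as you state it. Worse, the algebraic identity $\widetilde J\,W(\xi)\Omega=W(\xi)^\ast\Omega$ for an unbounded $W(\xi)$ does not by itself transfer to the spectral projections of $W(\xi)$, which are the actual generators you must reach.

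This is exactly the point where the paper does additional work: using the product formula $W(\xi)W(\eta)=\sum_k W(\xi\boxtimes_k\eta)$ it verifies, on the core of algebraic vectors (where each $W(\xi)$ restricts to a bounded map of $H^{(n)}_F$), that $W(\xi)$ commutes with $\widetilde JW(\eta)\widetilde J$; that is, $\widetilde J\,\Gauss_M(H,F,J)\,\widetilde J$ is exhibited inside the commutant, which is what lets one pass to spectral projections and hence to $\ppGauss_M(H,F,J)$, giving both that $\Omega$ is separating and the trace property. Your argument as written contains no analogue of this commutation step. Note that in the restricted setting of the main theorem (where $F$ is a projection with $(F\otimes1)(1\otimes F)=(1\otimes F)(F\otimes1)$), the maps $I_{n,m}$ are coisometries, Wick operators of bounded vectors are bounded, and your Kaplansky route would then work; but the theorem under discussion is stated for the general relative Gaussian construction, so either you must restrict its scope or supply the commutation (or an equivalent closability/affiliation) argument bridging the unbounded generators and their spectral projections.
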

\begin{proof}
  We know already that $\Omega$ is a cyclic vector for $\ppGauss_M(H,F,J)$. Moreover, we see that
  $\widetilde J W(\xi)\widetilde J\Omega=\widetilde J\xi=W(\widetilde J\xi)\Omega$.
  Hence it suffices to show that $W(\xi)$ commutes with $\widetilde JW(\eta)\widetilde J$.
  For each $n\in\IN$, we know that the space $\bbH^{(n)}_F$ is dense in $H^{(n)}_F$, and moreover, the
  operators $W(\xi)$ are bounded when restricted to $H^{(n)}_F$. So it suffices to show that
  \[W(\xi)\widetilde JW(\eta)\widetilde JW(\zeta)\Omega=\widetilde JW(\eta)\widetilde JW(\xi)W(\zeta)\Omega,\]
  for all bounded vectors $\xi\in H^{(n)}_F$, $\eta\in H^{(m)}_F$ and $\zeta\in H^{(k)}_F$. We compute that
  \begin{align*}
    \widetilde JW(\eta)\widetilde JW(\zeta)\Omega&=\widetilde J\sum_{i=0}^{\min(m,k)}W(\eta\boxtimes_i J^{(k)}\zeta)\Omega\\
    &=\sum_{i=1}^{\min(m,k)}W(\zeta\boxtimes_i J^{(m)}\eta)\Omega\\
    &=W(\zeta)W(J^{(m)}\eta)\Omega
  \end{align*}
  So we find that
  \begin{align*}
    W(\xi)\widetilde JW(\eta)\widetilde JW(\zeta)\Omega
    &=W(\xi)W(\zeta)W(J^{(m)}\eta)\Omega\\
    &=\sum_{i=0}^{\min(n,k)}W(\xi\boxtimes_i\zeta)W(\eta)\Omega\\
    &=\sum_{i=0}^{\min(n,k)}\widetilde JW(\eta)\widetilde JW(\xi\boxtimes_i\zeta)\Omega\\
    &=\widetilde JW(\eta)\widetilde JW(\xi)W(\zeta)\Omega.
  \end{align*}

  It follows that $W(\xi)$ commutes with $\widetilde JW(\eta)\widetilde J$, and hence that $\varphi$ is a faithful normal trace.
\end{proof}

\def\niets{
Many parts in the construction of the relative Gaussian construction depend on the choice of the faithful normal trace
on $M$, but we have never made that explicit in the notation. In fact, the von Neumann algebra $\ppGauss_M(H,F,J)$ does
not depend on this choice, and neither does the conditional expectation $E$. The state $\varphi$ however depends
on the choice of $\tau$ since it satisfies $\varphi=\tau\circ E$. In order to prove this, we
write explicitly $\cF_{(M,\tau)}(H,F)$ and $\ppGauss_{(M,\tau)}(H,F,J)$.
\begin{theorem}
  Let $\tau, \tau^\prime$ be two faithful normal traces on $M$. Let $H$ be an $M$-$M$ bimodule and let
  $F:H\otimes_M H\rightarrow H\otimes_M H$ and $J:H\rightarrow H$ be as before. Then there is a $\ast$-isomorphism
  $\psi:\ppGauss_{(M,\tau)}(H,F,J)\rightarrow\ppGauss_{(M,\tau^\prime)}(H,F,J)$ that satisfies
  $E_{\tau^\prime}\circ\psi=E_\tau$.
\end{theorem}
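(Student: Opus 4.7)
The plan is to construct an explicit $M$-$M$-bimodular, grading-preserving unitary $U\colon\cF_{(M,\tau)}(H,F)\to\cF_{(M,\tau')}(H,F)$ and then take $\psi=\Ad(U)$. The starting point is the Radon-Nikodym theorem for finite von Neumann algebras: since both $\tau,\tau'$ are faithful normal traces, they differ by a unique positive invertible $\delta\in\Centre(M)$ with $\tau'(x)=\tau(x\delta)$ for all $x\in M$. Centrality of $\delta$ is the crucial feature, since it turns right multiplication by $\delta^{\pm 1/2}$ into an $M$-$M$-bimodular map on every $M$-$M$ bimodule that appears in the construction.

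For each $n\in\IN$, I define $U_n\xi=\xi\delta^{-1/2}$ on the bounded vectors of $H^{(n)}$. Using $\inprod{\xi\delta^{-1/2}}{\xi\delta^{-1/2}}_M=\delta^{-1/2}\inprod{\xi}{\xi}_M\delta^{-1/2}=\delta^{-1}\inprod{\xi}{\xi}_M$ (the second equality by centrality of $\delta$) together with $\tau'(y)=\tau(y\delta)$, one obtains $\tau'(\inprod{U_n\xi}{U_n\xi}_M)=\tau(\inprod{\xi}{\xi}_M)$, so $U_n$ is isometric from the $\tau$-inner product to the $\tau'$-inner product on $H^{(n)}$. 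Since $D^{(n)}$ is built entirely from the $M$-$M$-bimodular operator $F$, it too is $M$-$M$-bimodular and commutes with right multiplication by the central $\delta^{-1/2}$; hence $U_n$ descends to a unitary $H^{(n)}_{F,\tau}\to H^{(n)}_{F,\tau'}$. Assembling yields an $M$-$M$-bimodular grading-preserving unitary $U=\bigoplus_n U_n\colon\cF_{(M,\tau)}(H,F)\to\cF_{(M,\tau')}(H,F)$, and right-$M$-modularity of $U$ implies that $\psi(T):=UTU^*$ is a normal $*$-isomorphism $\pcT_{(M,\tau)}(H,F)\to\pcT_{(M,\tau')}(H,F)$.

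To show that $\psi$ carries $\ppGauss_{(M,\tau)}(H,F,J)$ onto $\ppGauss_{(M,\tau')}(H,F,J)$ it suffices to treat the generators. Left-$M$-modularity of $U$ gives $\psi(x)=x$ for every $x\in M$ acting by left multiplication. For a bi-bounded vector $\xi\in H$ and $\eta\in H^{(n)}_{F,\tau'}$, centrality of $\delta$ combined with the Connes tensor product relation yields
\[UL_\tau(\xi)U^*\eta=U\bigl(\xi\otimes\eta\delta^{1/2}\bigr)=\xi\otimes\eta\delta^{1/2}\delta^{-1/2}=\xi\otimes\eta=L_{\tau'}(\xi)\eta,\]
so $\psi(L_\tau(\xi))=L_{\tau'}(\xi)$; taking adjoints and summing, $\psi(W_\tau(\xi))=W_{\tau'}(\xi)$ for every bi-bounded $\xi$, and by normal continuity $\psi(\ppGauss_{(M,\tau)}(H,F,J))=\ppGauss_{(M,\tau')}(H,F,J)$. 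Note that this argument sidesteps any question of $U$ intertwining $J$: one only needs $\psi$ to send creation operators to creation operators, which follows from $M$-$M$-bimodularity and grading-preservation of $U$ alone.

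Finally, the compatibility $E_{\tau'}\circ\psi=E_\tau$ is immediate from grading-preservation: the degree-zero projections satisfy $UP_\tau=P_{\tau'}U$, so $P_{\tau'}\psi(T)P_{\tau'}^*=U_0(P_\tau TP_\tau^*)U_0^*$ as operators on $\Lp^2(M,\tau')$. Since $U_0$ is right multiplication by the central element $\delta^{-1/2}$, it intertwines left multiplication: $U_0\lambda_\tau(y)U_0^*=\lambda_{\tau'}(y)$ for all $y\in M$, so under the identification of left-multiplication operators with elements of $M$ we get $E_{\tau'}(\psi(T))=E_\tau(T)$. The main obstacle I anticipate is the careful verification that $U_n$ descends to a well-defined unitary on the completion $H^{(n)}_{F,\tau}$: since the positive operator $D^{(n)}$ may have a nontrivial kernel, one must check that this kernel is preserved by right multiplication by $\delta^{-1/2}$ and that the induced map on the quotient extends isometrically for the $D^{(n)}$-deformed inner products. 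That $D^{(n)}$ commutes with multiplication by $\delta^{\pm 1/2}$ (again thanks to centrality of $\delta$) is what makes the descent work.
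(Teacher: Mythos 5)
Your overall strategy --- take the central Radon--Nikodym density $\delta$ of $\tau'$ with respect to $\tau$, build a grading-preserving $M$-$M$-bimodular unitary between the two Fock spaces by right multiplication with a power of $\delta^{1/2}$, and conjugate --- is the same one the paper's own (very fragmentary) sketch starts from, but your key step is false. In the paper's framework $H$ is a fixed Hilbert $M$-$M$-bimodule and the $M$-valued inner product is \emph{derived} from its scalar product together with the chosen trace; changing the trace changes the $M$-valued inner product, $\inprod{\xi}{\eta}_M^{\tau}=\inprod{\xi}{\eta}_M^{\tau'}\delta$, precisely so that $\tau(\inprod{\xi}{\xi}_M^{\tau})=\norm{\xi}^2_H=\tau'(\inprod{\xi}{\xi}_M^{\tau'})$. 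Your isometry computation keeps $\inprod{\cdot}{\cdot}_M$ fixed and only changes which trace is applied to it, which is a different ($C^{\ast}$-module) setting from the paper's. Already for $n=1$ the space $H^{(1)}_F=H$ and its inner product do not depend on the trace at all, so $\xi\mapsto\xi\delta^{-1/2}$ is not a unitary unless $\delta=1$. The genuine trace dependence sits in degree $0$ (where $\Lp^2(M,\tau)$ changes) and in the $n-1$ internal Connes-fusion slots of $H^{(n)}$ for $n\geq 2$ (each contributes a factor $\delta^{-1}$), so any correct degree-wise unitary must be degree dependent: for instance $\hat x\mapsto\widehat{x\delta^{-1/2}}$ in degree $0$, the identity in degree $1$, and insertion of $\delta^{1/2}$ into each internal slot in degree $n$ --- not your constant right multiplication by $\delta^{-1/2}$. (A further, smaller, point: $\delta$ is in general only a positive operator affiliated with $\Centre(M)$, not bounded with bounded inverse, which is why the paper works with the central vector $\widehat{\delta^{1/2}}\in\Lp^2(M,\tau)$ and the maps $\eta\mapsto l(\eta)\widehat{\delta^{1/2}}$ on left-bounded vectors.)

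Once the unitary is corrected, your claim that the interaction with $J$ can be sidestepped collapses, and that interaction is in fact the heart of the theorem. With a genuine (degree-dependent) unitary $U$ as above one finds $\Ad_U(L_\tau(\xi))=L_{\tau'}(\xi\delta^{1/2})$, hence $\Ad_U(W_\tau(\xi))=L_{\tau'}(\xi\delta^{1/2})+L_{\tau'}((J\xi)\delta^{1/2})^{\ast}$, whereas $W_{\tau'}(\xi\delta^{1/2})=L_{\tau'}(\xi\delta^{1/2})+L_{\tau'}(\delta^{1/2}(J\xi))^{\ast}$; these coincide only if the left and right actions of $\delta^{1/2}$ on $H$ agree, and centrality of $\delta$ in $M$ gives nothing of the sort. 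Note also that mapping creation operators to creation operators only identifies the two copies of $\pcT_M(H,F)$, which is the uninteresting ambient algebra isomorphic to $\Bounded(\ell^2(\IN))\otimes M$; the theorem concerns $\ppGauss$, whose generators are the words $W(\xi)$, and $L(\xi)$ itself does not belong to $\ppGauss$. So bimodularity and grading preservation of $U$ alone cannot yield $\Ad_U\bigl(\ppGauss_{(M,\tau)}(H,F,J)\bigr)=\ppGauss_{(M,\tau')}(H,F,J)$; showing that the images of the generators $W_\tau(\xi)$ still land in (and generate) the $\tau'$-Gaussian algebra is the missing, essential step, and it is exactly the point where the compatibility with $J$ must be confronted.
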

\begin{proof}
  Since $\tau$ is faithful, we find a vector $\xi\in\Lp^2(M,\tau)$ such that $\tau^\prime(x)=\inprod{\xi}{x\xi}$ for
  all $x\in M$. We can choose this vector $\xi$ to be $M$-central because $\tau^\prime$ is a trace.

  We define unitaries $U^{(n)}:H^{(n)}_{\tau^\prime}\rightarrow H^{(n)}_{\tau}$ by the relation that
  \[U^{(n)}\eta=l(\eta)\xi\text{ for all left-bounded vectors }\eta\in H^{(n)}.\]
  Observe that these unitaries are $M$-$M$ bimodular, and hence that they map (left or right) $\tau^\prime$-bounded vectors to
  (left or right) $\tau$-bounded vectors.
\end{proof}
}
\section{Proof of the Main Result}
\label{sect:main}
This section is devoted to the proof of the following theorem. The proof resembles very closely the proof of the main result of
\cite{HM12}, but our proof is a little less technical because the radial structure of relative Gaussian
constructions is easier to handle.
\begin{theorem}[see theorem \ref{thm:main:intro}]
  \label{thm:main}
  Let $M$ be a von Neumann algebra and let $H$ be a Hilbert $M$-$M$ bimodule. Assume that $F$ is a projection
  onto
  an $M$-$M$ subbimodule of $H\otimes_M H$ such that $F\otimes1$ commutes with $1\otimes F$ on
  $H\otimes_M\otimes_MH$.
  Let $\psi:\IN\rightarrow\IC$ be a function in class $\cC$ defined above. Then there is a unique
  ultraweakly continuous, completely bounded map
  $\Phi_\psi:\pcT_{M}(H,F)\rightarrow\Bounded_M\pcT_M(H,F)$ that satisfies
  \begin{align*}
    \Phi_\psi(L(T))
    &=\psi(n+m)L(T)
  \end{align*}
  for all bounded right-$M$-linear operators $T:H^{(n)}_F\rightarrow H^{(m)}_F$. Moreover,
  the completely bounded norm is less than
  $\norm{\Phi_\psi}_{cb}\leq\norm{\psi}_{\cC}.$
\end{theorem}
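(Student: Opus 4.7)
The strategy, as outlined in the introduction, is to exhibit $\Phi_\psi$ as (the compression of) a radial Schur multiplier on a tree, and then invoke the theorem of Haagerup--Steenstrup--Szwarc from \cite{HSS10}: for $\psi\in\cC$ the radial kernel $k_\psi(x,y)=\psi(d(x,y))$ on a tree $T$ with graph metric $d$ is a Schur multiplier of Schur norm at most $\norm{\psi}_{\cC}$. The assumption that $F$ is a projection with $F\otimes 1$ and $1\otimes F$ commuting on $H^{(3)}$ is precisely what allows a tree model of the Fock space $\cF_M(H,F)$ to be built; without it (e.g.\@ for $q$-Gaussian systems) the entire approach and indeed the conclusion itself break down.

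The plan, carried out in four steps, would go as follows. Step one: use the commuting projection hypothesis to produce an $M$-bimodular ``basis'' of $H^{(n)}_F$ indexed by the level-$n$ vertices of a rooted tree $T$, with parent-child edges coming from the refinement $H^{(n+1)}_F\subset H^{(n)}_F\otimes_M H$. In the amalgamated free product example this is exactly the Bass-Serre tree, and the level-$n$ basis consists of the reduced tensors $x_{i_1}\otimes_P\ldots\otimes_P x_{i_n}$ with $i_1\neq\ldots\neq i_n$. Step two: use this basis to define an isometry $V:\cF_M(H,F)\to\ell^2(T)\otimes \Lp^2(M)$ and verify that $V\pcT_M(H,F)V^{\ast}\subset\Bounded(\ell^2(T))\vnOtimes(M^{\op})^{\prime}$, so that Schur multipliers on $\ell^2(T)$ may be applied in the first tensor factor. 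Step three: show that for a bounded right-$M$-linear $S:H^{(n)}_F\to H^{(m)}_F$ the matrix coefficients $\inprod{Ve_x}{VL(S)V^{\ast}Ve_y}$ are supported on pairs $(x,y)\in T\times T$ whose tree-distance depends only on $n+m$ (up to a fixed rescaling), so that conjugation by $V$ converts $\Phi_\psi$ into a radial Schur multiplier on $T$ associated to $\psi$ (or to the even-support version $n\mapsto\psi(n/2)$ depending on the rescaling). Step four: apply Haagerup-Steenstrup-Szwarc to bound the Schur norm of this radial multiplier by $\norm{\psi}_{\cC}$, and read off $\norm{\Phi_\psi}_{cb}\leq\norm{\psi}_{\cC}$. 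Uniqueness of $\Phi_\psi$ is then immediate from the ultraweak density of $\cT_M(H,F)$ in its bicommutant $\pcT_M(H,F)$ combined with the required ultraweak continuity.

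The main obstacle is step one: constructing the tree $T$ together with a usable $M$-bimodular basis of $H^{(n)}_F$ in the general setting. For amalgamated free products or graphs of algebras this is essentially combinatorial and Bass-Serre-theoretic, but abstractly one must leverage the commuting projection hypothesis to show that the ``reduced tensors'' at each level admit a coherent recursive description: specifically, that the projections $\id_{k-1}\otimes F\otimes\id_{n-k-1}$ on $H^{(n)}$, which generate a representation of the braid monoid, have enough mutual commutation that their common kernel can be identified with an iterated tensor product of a ``reduced'' sub-bimodule of $H$. Bookkeeping $M$-valued inner products throughout so as to obtain completely bounded — not merely bounded — norms is the second source of technical work. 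A minor but important bookkeeping point is the parity/rescaling between the tree metric on $T$ and the degree $n+m$; this is handled by the standard substitution $\psi\mapsto\widetilde\psi$ (with $\widetilde\psi(2k)=\psi(k)$, $\widetilde\psi(2k+1)=0$) that preserves the $\cC$-norm, exactly as in the passage from $\cC^\prime$ to $\cC$ recalled in the introduction.
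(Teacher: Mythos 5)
There is a genuine gap, and it sits exactly where you flag it: step one. In the stated generality $M$ is an arbitrary tracial von Neumann algebra and $H$ an arbitrary $M$-$M$ bimodule, so there is no countable tree $T$ and, more fundamentally, no ``$M$-bimodular basis'' of $H^{(n)}_F$ indexed by its vertices: bimodules over a diffuse algebra do not admit orthonormal families that are simultaneously compatible with both actions, and even in the amalgamated free product case with diffuse amalgam $P$ the matrix coefficients $\inprod{Ve_x}{VL(S)V^{\ast}Ve_y}$ you want in step three do not exist in the required form --- the ``coefficients'' of a reduced word are $P$-valued, and the degree $n+m$ of $L(S)$ is not a function of a pair of vertices of any Bass--Serre-type tree unless strong finiteness assumptions hold (this is essentially why M\"oller's earlier result needed finite index). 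So the reduction to a scalar radial Schur multiplier on $\ell^2(T)$, to which Haagerup--Steenstrup--Szwarc could be applied in one tensor leg, is not available; your plan defers the entire difficulty to a construction that cannot be carried out as stated, and the remaining steps have nothing to act on.

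The paper's proof keeps the HSS/Haagerup--M{\o}ller \emph{strategy} but abandons the tree model altogether and works directly with the $\IN$-grading of $\cF_M(H,F)$. The ingredients are: radial multiplication operators $M_x$ ($x\in\ell^\infty(\IN)$) acting by $x(n)$ on $H^{(n)}_F$; a normal completely positive map $\rho(T)=\sum_i R(\xi_i)TR(\xi_i)^{\ast}$ built from a decomposition of $H$ as a \emph{left} module only (no bimodular basis needed), for which the hypothesis that $F$ is a projection with $F\otimes 1$ and $1\otimes F$ commuting enters precisely to give $I_{n,m}I_{n,m}^{\ast}=\mathrm{id}$ and hence the exact identity $\rho^l(L(T))=L(T)q_{n+l}$; the Christensen--Sinclair criterion to show that
\[\Phi_{x,y}(T)=\sum_{k\geq 0}M_{(S^{\ast})^kx}\,T\,M_{(S^{\ast})^ky}^{\ast}+\sum_{k\geq 1}M_{S^kx}\,\rho^k(T)\,M_{S^ky}^{\ast}\]
is completely bounded with $\norm{\Phi_{x,y}}_{cb}\leq\norm{x}_2\norm{y}_2$ and multiplies $L(T)$ by $\inprod{(S^{\ast})^mx}{(S^{\ast})^ny}$; and finally the rank-one (trace-class) decomposition of the H\"ankel matrix $H_\psi$ together with the constants $c_{\pm}$, giving $\Phi_\psi=c_+ +c_-\Ad_u+\sum_k\Phi_{x_k,y_k}$ with $u=M_z$, $z(n)=(-1)^n$, and the bound $\norm{\Phi_\psi}_{cb}\leq\abs{c_+}+\abs{c_-}+\norm{H_\psi}_1=\norm{\psi}_{\cC}$. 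If you want to rescue your outline, the honest statement is that these operators $M_x$ and $\rho$ \emph{are} the replacement for the tree geometry; the parity rescaling $\psi\mapsto\widetilde\psi$ you mention belongs not to this theorem but to its application to amalgamated free products in the last section, where $M$ is realized in a corner of a relative Gaussian construction.
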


We prove the theorem by a series of lemmas. For this section, fix a tracial von Neumann algebra $(M,\tau)$,
an $M$-$M$ bimodule $H$ and an $M$-$M$ bimodular projection $F:H\otimes H\rightarrow H\otimes H$ that satisfies the relation
\[(F\otimes\id)(\id\otimes F)=(\id\otimes F)(F\otimes\id).\]

The first lemma provides a way to estimate the
completely bounded norm of an operator of the form $\Phi(T)=\sum_i u_iTv_i$. It was proven by
Christensen and Sinclair in \cite{CS}.
\begin{lemma}[{see \cite[Corollary 6.2]{CS}}]
  \label{lem:cb}
  Let $H,K$ be Hilbert spaces and $(u_i),(v_i)$ sequences of bounded operators from $H$ to $K$.
  If $\sum_i u_iu_i^\ast$ and $\sum_i v_i^\ast v_i$ are bounded operators, then the formula
  $\Phi(T)=\sum_i u_iTv_i^\ast$ defines a completely bounded map $\Phi:\Bounded(H)\rightarrow\Bounded(K)$.
  The completely bounded norm of $\Phi$ is bounded by
  $\norm{\Phi}_{cb}^2\leq \norm{\sum_i u_iu_i^\ast}\norm{\sum_i v_i v_i^\ast}$.
\end{lemma}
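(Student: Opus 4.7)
The plan is to realise $\Phi$ as a Stinespring-style sandwich of the amplification homomorphism $\pi : \Bounded(H) \to \Bounded(H \otimes \ell^2(I))$ defined by $\pi(T) = T \otimes \id_{\ell^2(I)}$, compressed on the left and right by two row operators encoding the families $(u_i)$ and $(v_i)$. Since $\pi$ is a normal unital $*$-homomorphism, and hence completely isometric, the completely bounded norm of $\Phi$ will be controlled directly by the norms of the row operators, which in turn compute the two quantities appearing in the stated bound.

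First I would introduce the row operators $R_u, R_v : H \otimes \ell^2(I) \to K$ characterised on elementary tensors by $R_u(\xi \otimes e_i) = u_i \xi$ and $R_v(\xi \otimes e_i) = v_i \xi$. A direct computation yields $R_u R_u^\ast = \sum_i u_i u_i^\ast$ and $R_v R_v^\ast = \sum_i v_i v_i^\ast$ as operators on $K$, with the partial sums converging in the strong operator topology by our assumptions. Hence $R_u$ and $R_v$ extend to bounded operators on all of $H \otimes \ell^2(I)$ with $\norm{R_u}^2 = \norm{\sum_i u_i u_i^\ast}$ and $\norm{R_v}^2 = \norm{\sum_i v_i v_i^\ast}$; their adjoints are the column operators $R_u^\ast k = \sum_i (u_i^\ast k) \otimes e_i$ and analogously for $R_v^\ast$.

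Next I would verify the key identity
\[\Phi(T) = R_u \, \pi(T) \, R_v^\ast \qquad \text{for every } T \in \Bounded(H).\]
This is an unravelling: applied to $k \in K$, the right-hand side first sends $k$ to the column vector $(v_i^\ast k)_i$, then the amplification acts componentwise producing $(T v_i^\ast k)_i$, and finally $R_u$ collects this to $\sum_i u_i T v_i^\ast k = \Phi(T) k$. The identity also settles the convergence question a posteriori: because $\Phi(T)$ is now expressed as a composition of three bounded maps, the defining series converges in the strong operator topology and satisfies $\norm{\Phi(T)} \leq \norm{R_u}\,\norm{T}\,\norm{R_v}$.

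To conclude I would invoke the standard operator-space fact that for any bounded Hilbert-space operators $A$ and $B$, the sandwich map $S \mapsto A S B^\ast$ is completely bounded with cb-norm at most $\norm{A}\norm{B}$. Applied to $A = R_u$ and $B = R_v$ this gives a completely bounded map from $\Bounded(H \otimes \ell^2(I))$ to $\Bounded(K)$, which when pre-composed with the completely contractive $\pi$ yields
\[\norm{\Phi}_{cb} \leq \norm{R_u}\,\norm{R_v} = \norm{\sum_i u_i u_i^\ast}^{1/2} \norm{\sum_i v_i v_i^\ast}^{1/2}.\]
Squaring delivers the stated inequality. There is no essential obstacle here: the entire argument is a one-line factorisation once the row operators have been introduced, and the only small technical point is confirming that the partial sums $\sum_{i \leq N} u_i u_i^\ast$ and $\sum_{i \leq N} v_i v_i^\ast$ are increasing and uniformly bounded, so that their strong-operator limits exist and coincide with $R_u R_u^\ast$ and $R_v R_v^\ast$.
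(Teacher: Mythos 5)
Your proof is correct, but it takes a more structural route than the paper. The paper's argument is a bare-hands estimate: it first observes that $\Phi\otimes\id_{\MatM_n(\IC)}$ has exactly the same form $T\mapsto\sum_i(u_i\otimes1)T(v_i\otimes1)^\ast$, so that complete boundedness reduces to ordinary boundedness of each amplification, and then bounds $\abs{\inprod{\xi}{\Phi(T)\eta}}$ by Cauchy--Schwarz, arriving at $\norm{T}\,\inprod{\xi}{\left(\sum_i u_iu_i^\ast\right)\xi}^{1/2}\inprod{\eta}{\left(\sum_i v_iv_i^\ast\right)\eta}^{1/2}$ directly. You instead factor $\Phi(T)=R_u\,(T\otimes\id)\,R_v^\ast$ through the amplification homomorphism and invoke two standard facts: complete contractivity of a unital $\ast$-homomorphism and the bound $\norm{S\mapsto ASB^\ast}_{cb}\leq\norm{A}\,\norm{B}$ for sandwich maps. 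The underlying mechanism is identical --- your row and column operators are precisely what makes the paper's Cauchy--Schwarz step work, since $\abs{\inprod{R_u^\ast\xi}{(T\otimes\id)R_v^\ast\eta}}\leq\norm{R_u^\ast\xi}\,\norm{T}\,\norm{R_v^\ast\eta}$ is that same inequality --- but your version delegates the matrix-level verification to the cited general facts, whereas the paper's is self-contained at the cost of writing the estimate out. Both are complete proofs. One small point in your favour: the hypothesis of the lemma as printed asks for $\sum_i v_i^\ast v_i$ to be bounded, but the quantity that actually enters the conclusion and both proofs is $\sum_i v_iv_i^\ast$; you correctly worked with the latter throughout.
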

\def\niets{
\begin{proof}
  Observe that, for every $n\in \IN$, the map $\Phi\otimes\id_{\MatM_n(\IC)}$ is given by
  \[\Phi\otimes\id_{\MatM_n(\IC)}(T)=\sum_i (u_i\otimes 1)T(v_i\otimes 1)^\ast.\]
  So we only have to prove that $\Phi$ defines a bounded map from $\Bounded(H)$ to $\Bounded(K)$,
  and that its norm satisfies the relation above. I.e.\@ we have to show that
  \[\abs{\inprod{\xi}{\Phi(T)\eta}}
  \leq \norm{\sum_i u_iu_i^\ast}^{\frac12}\norm{\sum_i v_i v_i^\ast}^{\frac12}\norm{T}\norm{\xi}\norm{\eta}.\]
  This follows from the following computation
  \begin{align*}
    \abs{\inprod{\xi}{\Phi(T)\eta}}
    &\leq\sum_i\inprod{u_i^\ast\xi}{Tv_i^\ast\eta}\\
    &\leq\sum_i\norm{u_i^\ast\xi}\norm{T}\norm{v_i^\ast\eta}\\
    &\leq\norm{T}\left(\sum_i\norm{u_i^\ast\xi}^2\right)^{\frac12}
    \left(\sum_i\norm{v_i^\ast\xi}^2\right)^{\frac12}\\
    &\leq\norm{T}\inprod{\xi}{\left(\sum_i u_iu_i^\ast\right)\xi}^{\frac12}
    \inprod{\eta}{\left(\sum_i v_iv_i^\ast\right)\xi}^{\frac12}\\
    &\leq\norm{T}\norm{\xi}\norm{\eta}\norm{\sum_i u_iu_i^\ast}^{\frac12}\norm{\sum_i v_i v_i^\ast}^{\frac12}
  \end{align*}
\end{proof}
}

Whenever $x\in\ell^\infty(\IN)$,
we define a radial multiplication operator $M_x$ on $\cF_{M}(H,F)$ by the relation that
$M_x\eta=x(n)\eta$ whenever $\eta\in H^{(n)}_{F}$.
On $\ell^\infty(\IN)$, we consider the one-directional shift $S$, which is defined by $S(x)_n=x_{n-1}$ when $n>0$ and $S(x)_0=0$.
We denote the shift in the other direction by $S^\ast$.

\begin{remark}
\label{rem:M}
It is clear that the radial multiplication operators are $M$-$M$ bimodular and they
satisfy the following relations with the creation operators.
\begin{equation*}
  M_xL(T)M_y=M_{x((S^\ast)^nS^m y)}L(T)=L(T)M_{y (S^{n}(S^{\ast})^m x)}
\end{equation*}
for all bounded right-$M$ modular operators $T:H^{(n)}_F\rightarrow H^{(m)}_F$.
\end{remark}

We consider sequences $z,r_k\in\ell^\infty(\IN)$ that are defined by $z_n=(-1)^{n}$ and $r_k(n)=1$ whenever $n\geq k$ and $r_k(n)=0$ otherwise.
Observe that $u=M_z$ is a unitary and that the $q_k=M_{r_k}$ are projections.

As a left $M$-module, we can write $H$ in the form
$\bimod{M}{H}{}\cong\bigoplus_i \bimod{M}{\Lp^2(M,\varphi)p_i}{}$. We write $\xi_i$ for the vector corresponding
to $\hat1p_i$ in the $i$-th component of the direct sum above.
Observe that $\inprod_M{\xi_i}{\xi_j}=\delta_{i,j}p_i$.
\begin{lemma}
  \label{lem:rho}
  Define a completely positive map $\rho:\Bounded(\cF_{M,\varphi}(H,F))\rightarrow \Bounded(\cF_{M,\varphi}(H,F))$
  by the formula $\rho(T)=\sum_i R(\xi_i)TR(\xi_i)^\ast$.
  This operator satisfies
  \[\rho^l(L(T))=L(I_{m,l}(T\otimes\id_l)I^\ast_{n,l})=L(T)q_{n+l},\]
  for all bounded right-$M$ modular operators $T:H^{(n)}_F\rightarrow H^{(m)}_F$ and $l\geq 0$.
  In particular, $\rho$ is a subunital completely positive map, because $1=L(\id_0)$ and $q_1=L(\id_1)$ is a projection.
\end{lemma}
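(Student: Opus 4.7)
The plan is to prove the identity $\rho^l(L(T))=L(I_{m,l}(T\otimes\id_l)I_{n,l}^\ast)$ by induction on $l$, with the case $l=0$ trivial, and then to verify the identification with $L(T)q_{n+l}$ by a direct comparison of the definitions of $L$. The key inputs will be a frame-type resolution of the identity coming from the basis $\{\xi_i\}$, the associativity $I_{n+m,l}(I_{n,m}\otimes\id_l)=I_{n,m+l}(\id_n\otimes I_{m,l})$ already established, and the coisometry property $I_{n,m}I_{n,m}^\ast=\id$, which is where the commuting-projection hypothesis on $F$ is essential.

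The crucial base case is $l=1$. First I would observe that the chosen vectors $\{\xi_i\}$ form a Parseval frame for $H$, encoded by $\inprod_M{\xi_i}{\xi_j}=\delta_{i,j}p_i$, and deduce the resolution of the identity $\sum_i R(\xi_i)R(\xi_i)^\ast=q_1$ on $\cF_M(H,F)$. For $\eta\in H^{(n+1+k)}_F$, I would expand $\rho(L(T))\eta=\sum_i R(\xi_i)L(T)R(\xi_i)^\ast\eta$ by first using $R(\xi_i)^\ast\eta\in H^{(n+k)}_F$ together with the defining formula
\[L(T)R(\xi_i)^\ast\eta=I_{m,k}(T\otimes\id_k)I_{n,k}^\ast R(\xi_i)^\ast\eta,\]
and then reassembling with $R(\xi_i)$. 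The sum over $i$ uses the frame relation to collapse the $\xi_i$-insertions on the rightmost tensor factor to $\id$ on $H^{(1)}_F$, producing
\[\rho(L(T))\eta=I_{m+k,1}\bigl((I_{m,k}(T\otimes\id_k)I_{n,k}^\ast)\otimes\id_1\bigr)I_{n+k,1}^\ast\eta.\]
Two applications of the associativity relation for the $I$ maps (and its adjoint) rewrite the right-hand side as $L(I_{m,1}(T\otimes\id_1)I_{n,1}^\ast)\eta$. On components of degree at most $n$, both sides vanish: $L(T)R(\xi_i)^\ast\eta=0$ because $L(T)$ annihilates degrees below $n$, while $q_{n+1}\eta=0$ by definition of $q_{n+1}$.

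The inductive step is then immediate: applying the base case to $T^\prime=I_{m,l}(T\otimes\id_l)I_{n,l}^\ast:H^{(n+l)}_F\rightarrow H^{(m+l)}_F$ gives
\[\rho^{l+1}(L(T))=\rho(L(T^\prime))=L\bigl(I_{m+l,1}(T^\prime\otimes\id_1)I_{n+l,1}^\ast\bigr),\]
and the inner expression simplifies to $I_{m,l+1}(T\otimes\id_{l+1})I_{n,l+1}^\ast$ by another application of associativity. For the final equality $L(I_{m,l}(T\otimes\id_l)I_{n,l}^\ast)=L(T)q_{n+l}$, I would evaluate both sides on $\eta\in H^{(n+l+k)}_F$ and reduce them to the common form $I_{m,l+k}(T\otimes\id_{l+k})I_{n,l+k}^\ast\eta$; the key computational ingredient is the identity $I_{l,k}I_{l,k}^\ast=\id$ on $H^{(l+k)}_F$, which follows from the standing hypothesis that $F$ is a projection with $F\otimes\id$ and $\id\otimes F$ commuting (under this hypothesis $D^{(n)}$ is itself a projection, namely the product of the commuting projections $\id_{k-1}\otimes(\id-F)\otimes\id_{n-k-1}$ for $k=1,\ldots,n-1$, so $I_{n,m}$ becomes a coisometry).

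The main obstacle is the careful bookkeeping of how $R(\xi_i)^\ast$ interacts with the identification maps $I_{n,k}$ appearing in the definition of $L(T)$, and the subsequent rearrangement of the various $I$'s via associativity. The commuting-projection hypothesis on $F$ is essential precisely so that $I_{n,m}I_{n,m}^\ast=\id$ on $H^{(n+m)}_F$; without it, the identifications would only be contractive (with bounds that blow up binomially in $n,m$ as in the earlier lemma), the extra projection factors $I_{l,k}I_{l,k}^\ast\neq\id$ would obstruct the reduction to $L(T)q_{n+l}$, and the clean radial structure required for the subsequent completely-bounded-norm estimates would not survive.
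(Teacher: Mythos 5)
Your proof is correct and follows essentially the same route as the paper's: the base case $l=1$ via the resolution of the identity built from the vectors $\xi_i$, associativity of the maps $I_{n,m}$, and the coisometry property $I_{n,m}I_{n,m}^\ast=\id$ forced by the commuting-projection hypothesis on $F$, followed by induction on $l$. One small attribution slip: the rewriting of $I_{m+k,1}\bigl((I_{m,k}(T\otimes\id_k)I_{n,k}^\ast)\otimes\id_1\bigr)I_{n+k,1}^\ast$ as $L(I_{m,1}(T\otimes\id_1)I_{n,1}^\ast)$ (and likewise the identity $\sum_i R(\xi_i)R(\xi_i)^\ast=q_1$) also requires that coisometry, not associativity and the frame relation alone, since associativity only yields insertions of the form $T\otimes I_{k,1}I_{k,1}^\ast$ versus $T\otimes I_{1,k}I_{1,k}^\ast$ --- but you invoke exactly this coisometry elsewhere, so nothing essential is missing.
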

\begin{proof}
  We first prove lemma \ref{lem:rho} with $l=1$.
  Let $T:H^{(n)}_F\rightarrow H^{(m)}_F$ be a bounded right-$M$-modular operator. It is clear that $\rho(L(T))\eta=0=L(I_{m,1}(T\otimes\id)I_{n,1}^\ast)\eta$
  whenever $\eta\in H^{k}$ for some $k < n+1$. Let $\eta\in H^{n+k+1}$ for some $k\geq 0$.
  We compute that
  \begin{align*}
    \rho(L(T))\eta&=\sum_i I_{m+k,1}(\id_{m+k}\otimes r(\xi_i))I_{m,k}(T\otimes\id_1)I_{n,k}^\ast(\id_{n+k}\otimes r(\xi_i)^\ast)I_{n+k,2}^\ast\eta\\
    &\sum_i I_{m,k,1}(T\otimes \id_{k}\otimes r(\xi_i)r(\xi_i)^\ast)I_{n,k,1}^\ast\eta\\
    &=I_{m,k,1}(T\otimes\id_k\otimes\id_1)I_{n,k,1}^\ast\\
    &=I_{m,k+1}(T\otimes (I_{k,1}I_{k,1}^\ast))I_{n,k+1}^\ast\\
    \text{and }L(T\otimes\id)\eta&=I_{m,1,k}(T\otimes\id_1\otimes\id_k)I_{n,1,k}^{\ast}\\
    &=I_{m,k+1}(T\otimes(I_{1,k}I_{1,k}^\ast))I_{n,k+1}^{\ast}\\
    \text{and }L(T)q_{n+1}\eta&=I_{m,k+1}(T\otimes\id_{k+1})I_{m,k+1}^\ast.
  \end{align*}

  When $F$ is a projection such that $F\otimes\id_1$ commutes with $\id_1\otimes F$, then we get that $D^{(n)}:H^{(n)}\rightarrow H^{(n)}$
  is simply the projection onto the orthogonal complement of the closed linear span of the subspaces of the form $H^{(i-1)}\otimes_M F(H^{(2)})\otimes_M H^{(n-i-1)}$
  for $i=1,\ldots,n-1$. So $H^{(n)}_F$ is a closed subspace of $H^{(n)}$. The operator $I_{n,m}$ is simply the projection
  from $H^{(n)}_F\otimes_M H^{(m)}_F$ onto its closed subspace $H^{(n+m)}_F$. Hence we see that $I_{n,m}I_{n,m}^{\ast}=\id_{n+m}$, so
  it follows that $L(T\otimes\id)\eta=L(T)q_{n+1}\eta$. This is true for all $\eta\in H^{k}$, for all $k\in\IN$, so $\rho(L(T))=L(T\otimes\id)=L(T)q_{n+1}$.

  Now let $l>1$. By induction we get that
  \begin{align*}
    \rho^l(L(T))&=L(I_{m,1,\ldots,1}(T\otimes\id_1\otimes\ldots\otimes\id_1)I_{n,1,\ldots,1})\\
    &=L(I_{m,l}(T\otimes I_{1,\ldots,1}I_{1,\ldots,1}^\ast)I_{n,l}^\ast.
  \end{align*}
  By the same argument as above, we see that $_{1,\ldots,1}I_{1,\ldots,1}^\ast=\id_l$, so indeed
  \[\rho^l(L(T))=L(I_{m,l}(T\otimes\id_l)I_{n,l}^\ast)=L(T)q_{n+l}.\]
\end{proof}

\begin{lemma}
  \label{lem:Phi}
  Let $x,y\in\ell^2(\IN)$, and define an $M$-$M$-bimodular map $\Phi_{x,y}:\pcT_M(H,F)\rightarrow \pcT_M(H,F)$
  by the formula
  \[\Phi_{x,y}(T)=\sum_{n\geq 0}M_{(S^\ast)^nx}TM_{(S^\ast)^ny}^\ast+\sum_{n\geq 1}M_{S^nx}\rho^n(T)M_{S^ny}^\ast.\]
  Then this operator is completely bounded with completely bounded norm $\norm{\Phi_{x,y}}_{cb}\leq \norm{x}_2\norm{y}_2$
  that satisfies
  \begin{equation}
    \label{eq:phi}
    \Phi_{x,y}(L(T))=\inprod{(S^\ast)^mx}{(S^\ast)^ny}L(T).
  \end{equation}
\end{lemma}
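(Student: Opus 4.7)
My plan is to verify the identity on $L(T)$ by a direct computation, and then bound the completely bounded norm by applying Lemma~\ref{lem:cb} to a natural decomposition $\Phi_{x,y}(T) = \sum_\alpha u_\alpha T v_\alpha^*$.

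First I would check (\ref{eq:phi}). Fix a bounded right-$M$-linear $T\colon H^{(n)}_F \to H^{(m)}_F$ and evaluate both series on $\eta \in H^{(n+j)}_F$. Using Remark~\ref{rem:M},
\[
M_{(S^\ast)^k x}\, L(T)\, M_{(S^\ast)^k y}^\ast \eta
= x(m+j+k)\,\overline{y(n+j+k)}\, L(T)\eta,
\]
so summing $k\geq 0$ gives $\sum_{p\geq j} x(m+p)\overline{y(n+p)}\, L(T)\eta$. For the second series, Lemma~\ref{lem:rho} gives $\rho^k(L(T)) = L(T) q_{n+k}$, which kills $\eta$ unless $k \leq j$, in which case the contribution is $x(m+j-k)\overline{y(n+j-k)}\, L(T)\eta$. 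Summing $1\leq k\leq j$ (and re-indexing $p = j-k$) produces exactly the missing initial segment $\sum_{p=0}^{j-1} x(m+p)\overline{y(n+p)}\, L(T)\eta$. Adding, the $j$-dependence cancels and we recover $\inprod{(S^\ast)^m x}{(S^\ast)^n y}\, L(T)\eta$.

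For the completely bounded bound, I would expand $\rho^k(T) = \sum_{\vec i \in I^k} R_{\vec i}\, T\, R_{\vec i}^\ast$ with $R_{\vec i}=R(\xi_{i_1})\cdots R(\xi_{i_k})$, writing
\[
\Phi_{x,y}(T) = \sum_{k\geq 0} M_{(S^\ast)^k x}\, T\, M_{(S^\ast)^k y}^\ast
  + \sum_{k\geq 1}\sum_{\vec i \in I^k} M_{S^k x} R_{\vec i}\, T\, R_{\vec i}^\ast M_{S^k y}^\ast,
\]
which has exactly the form required by Lemma~\ref{lem:cb}. The decisive identity is $\sum_{\vec i \in I^k} R_{\vec i} R_{\vec i}^\ast = \rho^k(1) = q_k$, obtained from Lemma~\ref{lem:rho} applied to $T=\id_0$ (since $1 = L(\id_0)$). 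On a vector in $H^{(j)}_F$ one computes
\[
\sum_{k\geq 0} M_{(S^\ast)^k x} M_{(S^\ast)^k x}^\ast
  + \sum_{k\geq 1} M_{S^k x}\, q_k\, M_{S^k x}^\ast
= \sum_{k\geq 0} \abs{x(j+k)}^2 + \sum_{k=1}^{j} \abs{x(j-k)}^2
= \norm{x}_2^2,
\]
independently of $j$, and analogously with $y$ in place of $x$. Lemma~\ref{lem:cb} then delivers $\norm{\Phi_{x,y}}_{cb} \leq \norm{x}_2 \norm{y}_2$, and the same estimate justifies strong convergence of the two defining series.

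The main (mild) obstacle is index bookkeeping: the tail $\sum_{p\geq j}$ coming from the first series must be complemented \emph{exactly} by the head $\sum_{p=0}^{j-1}$ that the projection cutoff $q_{n+k}$ extracts from the second series, and the shifted sequences $S^k x$ (supported on $\{k,k+1,\dots\}$) must mesh with the $q_k$ so that the operator sums $\sum_\alpha u_\alpha u_\alpha^\ast$ collapse to $\norm{x}_2^2\cdot\id$ uniformly in the degree. Everything is rigged for this, but ensuring the two ``halves'' assemble into a single $j$-independent full sum is where the proof does its real work.
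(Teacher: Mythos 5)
Your proposal is correct and follows essentially the same route as the paper: it verifies (\ref{eq:phi}) by commuting the radial multipliers past $L(T)$ via Remark \ref{rem:M}, using $\rho^k(L(T))=L(T)q_{n+k}$ from Lemma \ref{lem:rho} so that the tail of the first series and the head of the second assemble into the full, degree-independent sum, and then invokes Lemma \ref{lem:cb}. The only cosmetic difference is that you compute $\sum_\alpha u_\alpha u_\alpha^\ast=\Phi_{x,x}(1)=\norm{x}_2^2$ by direct summation on each $H^{(j)}_F$, whereas the paper obtains the same identity as the special case $T=\id_0$ of (\ref{eq:phi}); this is the same calculation.
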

\begin{proof}
  Observe that $\Phi_{x,y}$ is given in the form of lemma \ref{lem:cb}, and that in the notation of lemma \ref{lem:cb}, we have that $\sum_{i}u_iu_i^\ast=\Phi_{x,x}(1)$
  and $\sum_i v_i v_i^\ast=\Phi_{y,y}(1)$. So, once we show (\ref{eq:phi}), it follows from lemma \ref{lem:cb} that $\Phi_{x,y}$ is completely bounded and
  its norm is bounded by $\norm{\Phi_{x,y}}_{cb}\leq \norm{x}_2\norm{y}_2$.

  Let $T:H^{(n)}_F\rightarrow H^{(m)}_F$ be a bounded right-$M$-linear operator. Then we see that
  \begin{align*}
    \Phi_{x,y}(L(T))&=\sum_{k\geq 0}M_{(S^\ast)^kx}L(T)M_{(S^\ast)^ky}^\ast+\sum_{k\geq 1}M_{S^kx}\rho^k(L(T))M_{S^ky}^\ast\\
    &=L(T)\left(\sum_{k\geq 0}M_{S^n(S^\ast)^{k+m}x}M_{(S^\ast)^ky}^\ast + \sum_{k\geq 1}M_{S^{n+k}(S^{\ast})^{m+k}S^kx}M_{S^ky}^\ast q_{n+k}\right)\\
    &=L(T)M_{f},
  \end{align*}
  where the function $f$ is given by
  \begin{align*}
    f(l)&=0&&\text{ if }l < n\\
    f(n+l)&=\sum_{k\geq 0} x(l+m+k)\overline{y(l+n+k)}+\sum_{k=1}^{l} x(l-k+m)\overline{y(l+n-k)}&&\text{ if } l\geq 0\\
    &=\sum_{k\geq l} x(k+m)\overline{y(k+n)} + \sum_{k=0}^{l-1} x(k+m)\overline{y(k+n)}\\
    &=\inprod{(S^\ast)^n y}{(S^{\ast})^m x}.
  \end{align*}

  Since $L(T)=L(T)q_n$, we see that indeed
  \[\Phi_{x,y}(L(T))=L(T)\inprod{(S^\ast)^n y}{(S^{\ast})^m x}.\]
\end{proof}

\begin{proof}[proof of theorem \ref{thm:main}]
It is well-known that every trace class operator $T\in\Bounded(\ell^2\IN)$ can be written as a sum
of rank one operators $T=\sum_n x_n\otimes y_n$ with $x_n,y_n\in \ell^2(\IN)$ and where the sum converges in
trace norm. Moreover, the trace norm of $T$ is $\sum_n\norm{x_n}_2\norm{y_n}_2$.
In particular, for the H\"ankel matrix $H_\psi$, we find sequences of vectors $x_n,y_n\in\ell^2(\IN)$
such that
\[\psi(k+l)-\psi(k+l+2)=H_\psi(k,l)=\sum_n x_n(k)\overline{y_n(l)}\text{ for all }k,l\in\IN,\]
and such that $\norm{H_\psi}_1=\sum_n\norm{x_n}_2\norm{y_n}_2$.
Moreover, we see that
\[\psi(k)=c_+ + (-1)^kc_- + \sum_m (\psi(k+2m)-\psi(k+2m+2)).\]
So we also see that
\begin{align*}
  \psi(k+l)&=
  c_+ + (-1)^{k+l}c_- + \sum_{m,n}x_n(k+m)\overline{y_n(l+m)}\\
  &=c_+ + (-1)^{k+l}c_- + \sum_{n}\inprod{(S^{\ast})^ly_n}{(S^{\ast})^kx}
\end{align*}

Now, we set $\Phi_\psi=c_+ + c_- \Ad_u + \sum_k \Phi_{x_k,y_k}$, where $u=M_z$ with $z(k)=(-1)^k$ as before.
Let $T:H^{(n)}_F\rightarrow H^{(m)}_F$ be a bounded right-$M$-linear operator.
Then it follows from lemma \ref{lem:Phi} that
\begin{align*}
  \Phi_\psi(L(T))
  &=c_+ + c_- L(T)M_{S^{m}(S^\ast)^nz}M_z + \sum_k \Phi_{x_k,y_k}(L(T))\\
  &=\left(c_+ + c_- (-1)^{n+m} + \sum_n \inprod{(S^\ast)^my}{(S^\ast)^nx}\right)L(T)\\
  &=\psi(n+m)L(T)\\
\end{align*}
So $\Phi_\psi$ is indeed the map we were searching for. Moreover $\Phi_\psi$ is completely bounded and its completely bounded norm is bounded by
\[\norm{\Phi_\psi}_{cb}\leq \abs{c_+} + \abs{c_-} + \sum_{k} \norm{x_k}_2\norm{y_k}_2=\norm{\psi}_{\cC}.\]
\end{proof}

\section{Amalgamated free product von Neumann algebras}
\label{sect:cons}
In this section we deduce the following theorem from theorem \ref{thm:main}.
\begin{theorem}
  \label{thm:main:prod}
  Let $(M_i)_i$ be a (finite or countably infinite) family of tracial von Neumann algebras with a common subalgebra $P$.
  Denote by $M$ the amalgamated free product over $P$
  of the von Neumann algebras $M_i$. Let $\psi:\IN\rightarrow \IC$ be a function in class $\cC^\prime$ defined in the introduction.
  Then there is a unique ultraweakly continuous completely bounded map $\Psi_\psi:M\rightarrow M$ that satisfies
  \[\Psi_\psi(x_1\ldots x_n)=\psi(n)x_1\ldots x_n\text{ for every reduced word }x_1\ldots x_n\in M.\]
  Moreover, the completely bounded norm of $\Psi_\psi$ is bounded above by $\norm{\Psi_\psi}_{cb}\leq\norm{\psi}_{\cC^\prime}$.
\end{theorem}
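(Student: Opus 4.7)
The strategy is to realise $M = \free_P M_i$ as a corner of the relative Gaussian construction $\ppGauss_{\tilde M}(H, F, J)$ built from the graph-of-von-Neumann-algebras data described in the introduction, and then to transfer the completely bounded multiplier of Theorem \ref{thm:main} down to $M$ via this corner. Set $\tilde M = \bigoplus_i M_i$ (with a suitable faithful tracial state, via normalised weights in the countably infinite case), take $H = \bigoplus_{i \neq j} \Lp^2(M_i) \otimes_P \Lp^2(M_j)$, let $F$ be the orthogonal projection on $H \otimes_{\tilde M} H$ onto the ``$P$-collapse'' subspace spanned by the summands $\Lp^2(M_i) \otimes_P \Lp^2(P) \otimes_P \Lp^2(M_i)$, and let $J$ be the anti-unitary swap $\hat x \otimes \hat y \mapsto \widehat{y^*} \otimes \widehat{x^*}$.

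The first task is to verify the hypotheses of Theorem \ref{thm:main}: $F$ is clearly a self-adjoint projection, and to check that $F \otimes 1$ commutes with $1 \otimes F$ on $H^{(3)}$ one decomposes $H^{(3)}$ along its chains of indices and observes that the two projections impose ``$P$-collapse'' conditions on disjoint pairs of positions, hence commute. Extending the two-factor identification $\ppGauss_{\tilde M}(H, F, J) \cong \MatM_2(\IC) \otimes (M_1 \free_P M_2)$ from the introduction to arbitrary index sets, one sees that $\ppGauss_{\tilde M}(H, F, J)$ is an amplification of $M$: there is a family of mutually equivalent projections $\{p_i\}_{i \in I} \subset \tilde M$ with central support $1$, and each corner $p_i \ppGauss p_i$ is naturally isomorphic to $M$.

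The map $\Phi_\varphi$ produced by Theorem \ref{thm:main} is $\tilde M$-$\tilde M$-bimodular, its building blocks being the radial multiplication operators of Remark \ref{rem:M} and the completely positive map $\rho$ of Lemma \ref{lem:rho}, all of which are bimodular, so it restricts to any such corner. Given $\psi \in \cC'$, I choose $\varphi \in \cC$ so that this restricted map coincides with $\Psi_\psi$. Two natural candidates are available: $\varphi = \psi$, using the inclusion $\cC' \subseteq \cC$ that follows from the elementary identity $H_\psi = K_\psi + \widetilde K_\psi$ (which gives $\norm{\psi}_\cC \leq \norm{\psi}_{\cC'}$), or the Wysoczanski-type substitution $\varphi(2k) = \psi(k)$, $\varphi(2k+1) = 0$ recalled in the introduction (which preserves the norm exactly). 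In either case Theorem \ref{thm:main} yields $\norm{\Phi_\varphi}_{cb} \leq \norm{\psi}_{\cC'}$, which transfers through the corner to the required bound $\norm{\Psi_\psi}_{cb} \leq \norm{\psi}_{\cC'}$.

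The most delicate step is the final verification that $\Psi_\psi$ indeed acts by the scalar $\psi(n)$ on every reduced word of length $n$ in $M$. Under the corner isomorphism a reduced word $x_1 \cdots x_n$ is expressed, via the Wick formula and the product rule of Proposition \ref{prop:L}, as a finite combination of operators $L(T)$ with $T: H^{(a)}_F \to H^{(b)}_F$ for various $(a, b)$, and one must show that this combination lies in a single eigenspace of $\Phi_\varphi$ with eigenvalue $\psi(n)$. Carrying this out requires a careful bookkeeping of how reduced words are realised inside the corner in terms of the left $\tilde M$-action and the creation/annihilation operators, and it is the principal technical obstacle of the argument.
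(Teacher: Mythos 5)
Your overall strategy -- realise $M$ as a corner of a relative Gaussian construction and push the multiplier of Theorem \ref{thm:main} through that corner -- is indeed the strategy of the paper, but the step you postpone as ``the principal technical obstacle'' is exactly where the proof lives, and with the data you chose it cannot be completed. Take two factors and your model $\tilde M=M_1\oplus M_2$, $H=\Lp^2(M_1)\otimes_P\Lp^2(M_2)\oplus\Lp^2(M_2)\otimes_P\Lp^2(M_1)$, and let $e_1\in\tilde M$ be the unit of $M_1$. Since $F$ is a projection and $D^{(2)}=1-F$, one has $H^{(2)}_F\cong(\operatorname{ran}F)^{\perp}$, and the corner of the Fock space computes to $e_1H^{(2)}_Fe_1\cong\Lp^2(M_1)\otimes_P\Lp^2(M_2\ominus P)\otimes_P\Lp^2(M_1)$, and more generally $e_1H^{(2k)}_Fe_1$ is an alternating chain of $2k+1$ factors in which only the $2k-1$ \emph{interior} factors are reduced modulo $P$, while the two outer factors are full copies of $\Lp^2(M_1)$. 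Consequently the Fock-degree-$2k$ part of the corner contains reduced words of the amalgamated free product of lengths $2k-1$, $2k$ and $2k+1$ simultaneously (e.g.\ $\hat1\otimes\hat x_2\otimes\hat1$, $\hat x_1\otimes\hat x_2\otimes\hat1$ and $\hat x_1\otimes\hat x_2\otimes\hat x_3$ all lie in degree $2$). A map that is radial with respect to the Fock grading therefore assigns one and the same scalar to words of three consecutive word-lengths, so \emph{no} choice of $\varphi$ -- neither $\varphi=\psi$ nor the doubling substitution -- makes the restriction of $\Phi_\varphi$ equal to $\Psi_\psi$. Your observation that $\cC^\prime\subset\cC$ via $H_\psi=K_\psi+\widetilde K_\psi$ is correct but does not help with this obstruction.

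The paper avoids this by changing the model: it uses the star-shaped graph of von Neumann algebras with a \emph{separate central vertex} $P$, i.e.\ $\widetilde P=P\oplus\bigoplus_iM_i$ and $H=\bigoplus_i\bigl(\bimod{M_i}{\Lp^2(M_i)}{P}\oplus\bimod{P}{\Lp^2(M_i)}{M_i}\bigr)$, and cuts by the central projection $p\in\widetilde P$ over the summand $P$. In that corner every letter of a reduced word must traverse an edge and return, so $p\cF_{\widetilde P}(H,F)p\cong\Lp^2(M)$ with a reduced word $x_1\cdots x_n$ ($x_k\in M_{i_k}\ominus P$) realised as $pW(x_1\otimes\cdots\otimes x_n)p$ of Fock degree exactly $2n$, and odd degrees have zero corner. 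Then the substitution $\tilde\psi(2n)=\psi(n)$, $\tilde\psi(2n+1)=0$, with $\norm{\tilde\psi}_{\cC}=\norm{\psi}_{\cC^\prime}$, together with Theorem \ref{thm:main} and the $\widetilde P$-bimodularity of $\Phi_{\tilde\psi}$ (which guarantees the corner is preserved), gives both the eigenvalue property and the norm bound. To repair your argument you must switch to this (or an equivalent length-doubling) model; the bookkeeping you defer cannot succeed in the model you set up, by the degree computation above.
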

\begin{proof}
  Consider the graph of von Neumann algebras that is depicted below
  \\\hspace*{\stretch{1}}\begin{tikzpicture}[thick]
    \fill (0,0) circle(1.5pt) node[above left]{$P$};
    \fill (0,0) -- (60:1.5cm) node[pos=0.5, above]{$P$} circle(1.5pt) node[above right]{$M^{(1)}$};
    \fill (0,0) -- (10:1.5cm) node[pos=0.5, above]{$P$} circle(1.5pt) node[above right]{$M^{(2)}$};
    \fill (-20:1.5cm) circle(0.5pt) (-25:1.5cm) circle(0.5pt) (-30:1.5cm) circle(0.5pt);
    \fill (0,0) -- (-60:1.5cm) node[pos=0.5, above]{$P$} circle(1.5pt) node[right]{$M^{(i)}$};
    \fill (-90:1.5cm) circle(0.5pt) (-95:1.5cm) circle(0.5pt) (-100:1.5cm) circle(0.5pt);
  \end{tikzpicture}\hspace*{\stretch{1}}\\

  As in the introduction, we can consider the fundamental von Neumann algebra $\widetilde M$ of this graph.
  In other words, we set $\widetilde M=\Gamma^{\prime\prime}_{\widetilde P}(H,F,J)$ where
  \begin{align*}
    \widetilde P&=P\oplus\bigoplus_i M_i\\
    H&=\bigoplus_i \bimod{M_i}{\Lp^2(M_i)}{P} \oplus \bimod{P}{\Lp^2(M_i)}{M_i}\\
    J(\hat x)&=\widehat{x^\ast}\in \bimod{P}{\Lp^2(M_i)}{M_i}\qquad\text{ for all }x\in\bimod{M_i}{\Lp^2(M_i)}{P}\text{ and vice-versa}.
  \end{align*}
  and where $F$ is the projection onto the $\widetilde P$-$\widetilde P$ Hilbert subbimodule
  \begin{align*}
    L&=\bigoplus_i \bimod{P}{P}{P}\oplus\bigoplus_i \bimod{M^{(j)}}{\overline{M^{(i)}\otimes_P M^{(i)}}}{M^{(i)}}\\
    &\subset \bigoplus_i \bimod{P}{\overline{M^{(i)}}}{P} \oplus \bigoplus_{i,j} \bimod{M^{(i)}}{\overline{M^{(i)}\otimes_P M^{(j)}}}{M^{(j)}}\\
    &=H^{(2)}.
  \end{align*}

  Denote by $p$ the central projection in $\widetilde P\subset \widetilde M$ that corresponds to the term $P$ in the direct sum $\widetilde P=P\oplus\bigoplus_i M^{(i)}$.
  Then it follows that
  \begin{equation}
    \label{eq:prod}
    p\cF_{\widetilde P}(H,F)p\cong\bimod{P}{\Lp^2(P)}{P}\oplus \bigoplus_i \bimod{P}{\Lp^2(M_i\ominus P)}{P} \oplus
    \bigoplus_{i\not=j} \bimod{P}{\Lp^2(M_i\ominus P)\otimes_P\Lp^2(M_j\ominus P)}{P}\oplus\ldots,
  \end{equation}
  a priori just as $P$-$P$ bimodules.
  Observe that each term $\Lp^2(M_i)$ that appears in the direct sum above is a direct summand
  of $H^{(2)}_{F}$ rather than of $H^{(1)}_{F}$.
  The $P$-$P$ bimodule on the right is precisely $\Lp^2(M)$. We denote by $U:p\cF_{\widetilde P,\varphi}(H,F)p\rightarrow \Lp^2(M)$
  the unitary that implements the identification in (\ref{eq:prod}).
  It is now easy to see that
  \[upW(\hat y)pu^\ast = y\in M,\]
  for every $y\in M_i\ominus P$.

  In particular, we get that $\alpha=\Ad_{u^\ast}:M\rightarrow p\widetilde Mp$ is an isomorphism that maps a reduced word $x_1\ldots x_n$ of length $n$
  to the reduced word $pW(x_1\otimes \ldots\otimes x_n)p$ that has length $2n$. Let $\psi:\IN\rightarrow \IC$ be a function in class $\cC^\prime$.
  We consider the function $\tilde \psi:\IN\rightarrow \IC$ that is defined by $\tilde\psi(2n)=\psi(n)$ and $\tilde\psi(2n+1)=0$ for all $n\in\IN$.
  Then we know that $\tilde\psi\in\cC$ and its norm is $\norm{\tilde \psi}_{\cC}=\norm{\psi}_{\cC^\prime}$. By theorem \ref{thm:main}, we find an
  ultraweakly continuous map
  $\Phi_{\tilde \psi}:\pcT_{M}(H,F)\rightarrow\pcT_M(H,F))$ that satisfies
  \begin{align*}
    \Phi_{\tilde\psi}(L(T))
    &=\psi(n+m)L(T)\\
  \end{align*}
  for all bounded right-$M$ modular operators $T:H^{(n)}_F\rightarrow H^{(m)}_F$. Moreover, the completely bounded norm of $\Phi_{\tilde\psi}$ is bounded by
  $\norm{\Phi_{\tilde\psi}}_{cb}\leq\norm{\psi}_{\cC}$.

  In particular, we get that $\Phi_{\tilde\psi}(W(\xi))=\tilde\psi(n)W(\xi)$ for all bounded vectors $\xi\in H^{(n)}_F$.
  Moreover, $\Phi_{\tilde\psi}$ is $M$-$M$-bimodular, and in particular, $\Phi_{\tilde\psi}(p\widetilde Mp)\subset p\widetilde Mp$.
  We define $\Phi_\psi:M\rightarrow M$ by the formula $\Phi_\psi(x)=\alpha^{-1}(\Phi_{\tilde\psi}(\alpha(x)))$. We conclude that
  \[\Phi_\psi(x_1\ldots x_n)=\tilde\psi(2n)x_1\ldots x_n=\psi(n)x_1\ldots x_n\]
  for all reduced words $x_1\ldots x_n$ in the amalgamated free product decomposition $M=M_1\free_PM_2\free_P\ldots$.
  Moreover, the completely bounded norm of $\Phi_\psi$ is bounded by
  \[\norm{\Phi_\psi}_{cb}\leq\norm{\Phi_{\tilde\psi}}_{cb}\leq \norm{\tilde\psi}_{\cC}=\norm{\psi}_{\cC^\prime}.\]
  So $\Phi_\psi$ satisfies the conditions of theorem \ref{thm:main}
\end{proof}


\newcommand{\etalchar}[1]{$^{#1}$}
\bibliography{references}{}
\bibliographystyle{sdpabbrv}

\end{document}